\theoremstyle{plain}
\def\endproof{\hspace*{\fill}\mbox{\ \rule{.1in}{.1in}}\medskip }
\numberwithin{equation}{section}
\numberwithin{figure}{section}
\newtheorem{theorem}{Theorem}[section]
\newtheorem{corollary}[theorem]{Corollary}
\newtheorem{lemma}[theorem]{Lemma}
\newtheorem{definition}[theorem]{Definition}
\newcommand{\R}{\mathbb R} 
\newcommand{\sym}{{\mathrm sym}}
\newcommand{\ds}{\displaystyle} 
\theoremstyle{definition}
\newtheorem{example}[theorem]{Example}
\newtheorem{remark}[theorem]{Remark}
\numberwithin{equation}{section}
\numberwithin{figure}{section}
\def\Xint#1{\mathchoice
{\XXint\displaystyle\textstyle{#1}}%
{\XXint\textstyle\scriptstyle{#1}}%
{\XXint\scriptstyle\scriptscriptstyle{#1}}%
{\XXint\scriptscriptstyle\scriptscriptstyle{#1}}%
\!\int}
\def\XXint#1#2#3{{\setbox0=\hbox{$#1{#2#3}{\int}$ }
\vcenter{\hbox{$#2#3$ }}\kern-.6\wd0}}
\def\dashint{\Xint-}
\begin{document}

\title[Shallow shells of lower order]
{The Monge-Amp\`ere constrained elastic theories \\ of shallow shells} 
\author{Marta Lewicka, L. Mahadevan and Mohammad Reza Pakzad}
\address{Marta Lewicka, University of Pittsburgh, Department of Mathematics, 
301 Thackeray Hall, Pittsburgh, PA 15260}
\address{L. Mahadevan, Harvard University, School of Engineering and
  Applied Sciences, and Department of Physics, 
Cambridge, MA 02138}
\address{Mohammad Reza Pakzad, University of Pittsburgh, Department of Mathematics, 
301 Thackeray Hall, Pittsburgh, PA 15260}
\email{lewicka@pitt.edu, lm@seas.harvard.edu, pakzad@pitt.edu}
\subjclass{74K20, 74B20}
\keywords{non-Euclidean plates, nonlinear elasticity, Gamma convergence, calculus of variations}
 
\begin{abstract}
Motivated by the degree of smoothness of constrained embeddings of
surfaces in $\mathbb{R}^3$,  and by the recent applications to the
elasticity of shallow shells, we rigorously derive the $\Gamma$-limit
of 3-dimensional nonlinear elastic energy of a shallow shell of
thickness $h$,  
where the depth of the shell scales like $h^\alpha$ and the applied
forces scale like $h^{\alpha+2}$, in the limit when $h\to 0$. The main  
analytical ingredients are two independent results: a theorem on
approximation of $W^{2,2}$ solutions of the Monge-Amp\`ere equation  
by smooth solutions, and a theorem on the matching (in other words,
continuation) of second order isometries to exact isometries. 
\end{abstract}

\maketitle

\section{Introduction}\label{intro}

The mathematical theory of elastic shells must account for the deformation of thin elastic surfaces of non-zero curvature and the
associated  energetics.  The subject thus brings together the differential geometry of surfaces with the theory of elasticity appropriately
modified to  account for the small aspect ratio of these slender elastic structures. From a practical engineering perspective, many approximate
theories  have been proposed for the mechanical behavior of elastic shells over the last 150 years \cite{Calladine}. The mathematical foundations for  
these theories has lagged behind but has recently been the focus of much attention \cite{ciar, ciar_book, FJMhier} from two perspectives: (i)  
understanding the rigorous derivation of these theories  teaches us about the limits of their practical applicability, and (ii) it also
sheds light on the smoothness of allowable deformations with implications for the properties of elliptic operators that arise naturally in differential geometry. 
Both these questions can be couched in terms of the behavior of the elastic energy of the shell as
a function of its aspect ratio, i.e.  the ratio of its thickness to its curvature and/or lateral extent. For
a flat sheet with no intrinsic curvature,  the stretching energy  per unit area of a thin sheet is proportional to its thickness, while
the bending energy per unit area is proportional to the cube of its thickness. Thus, as the aspect ratio diminishes, the bending energy vanishes
faster than the stretching or shearing energy. This leads to approximately  isometric deformations of the sheet when it is subject to external
loads, with a concomitant theory known as the F\"oppl-von K\'arm\'an theory \cite{karman} of elastic plates. For shallow shells with an intrinsic
curvature, there is a new small parameter corresponding to the product of  the intrinsic curvature and the thickness so that there are various
possible distinguished limits associated with how small or large this parameter  
is independent of the aspect ratio of the shell. This leads to various theories that penalize non-isometric deformations more or less
depending on the relative magnitude of external loads that cause the shell to deform. 

Over the recent past, we are beginning to get a  good understanding of the limiting models and the types of isometries  
involved in various similar contexts  (e.g. see the review and a conjecture in \cite{lepa} for thin shells), but each separate case
enjoys its own peculiarities related to the the geometry of the shell (hyperbolic, elliptic, degenerate or of mixed type), the order of the
relevant approximate isometries on the shell, the linearity or it lack in the PDEs governing them (which become more complicated as the order  
increases) and their regularity.  For shells of thickness  $h$ and depth $h^\alpha$, subject to applied forces that scale 
with the shell depth as $h^{\alpha+2}$, as the thickness of the shell $h \to 0$, depending on the  
choice of $\alpha$, various limiting theories arise. The regimes $\alpha=1$ and $\alpha> 1$ can be treated in a similar manner 
as discussed in another context in \cite{LMP-newbefore} and are not of interest to us in this paper. 

Here we consider  the case  
when the shell is shallow and the forces weak, but not too weak, i.e. when $0<\alpha <1$, so that nearly isometric deformations might
be expected, but as nonlinear constraints, rather than linear ones as appearing in the $\alpha\ge 1$ regimes. 
In the vanishing thickness limit, using the basic 
methods of $\Gamma$-convergence  which were developed in this context
in \cite{FJMgeo, FJMhier}, we derive a new thin film model   (but see also previous results in \cite{lemapa, LMP-newbefore}).   
Our analysis corresponds to a situation where the second order infinitesimal  isometries on a shallow shell reference configuration are given by the out-of-plane 
displacement $h^\alpha v_0$. In analogy with the results in \cite{FJMhier} for plates with flat geometry $v_0=0$ in the energy scaling  
regime $2<\beta<4$,  we recover a Monge-Amp\`ere constraint $\ds \det \nabla^2 v = \det \nabla^2 v_0$ as a second order isometry constraint 
 on the limiting displacements of Sobolev regularity $W^{2,2}$. This emergence of a \lq \lq linearized curvature"  constraint is natural
 from  a mechanical point of view since the Laplacian of the strain which is proportional to the  Laplacian of the stress for linear stress-strain 
relations characterizes local area changes
 and the isometry condition is softened to one that is effectively locally area-preserving. 

Similarly to previously studied cases \cite{FJMhier, lemopa_convex}, in
order to complete the analysis, we need a result  
on the matching property, i.e.  the continuation of second order
isometries, on shallow shells, to exact isometries (under suitable
regularity assumptions), and another result on
approximating the $W^{2,2}$ solutions of the Monge-Amp\`ere equation by
smooth solutions. Contrary to \cite{lemopa_convex, holepa}, the
constraint here is a fully nonlinear one, 
and one cannot take advantage of the full 
degeneracy of $\det\nabla^2 v = 0$ in \cite[Theorems 7 and 10]{FJMhier} (see also \cite{MuPa2}). 
Nevertheless, we are able to prove the applicable versions of these
results in the case $\det\nabla^2 v_0 \equiv c>0$.  In what follows we start with a 
formulation of the problem (in Section 2) and discuss the various possible
choices of $\alpha$ and the known corresponding limiting theories. 
In Sections 3 and 4, we present our main results, namely, the asymptotic behavior of the elastic energies when 
$h\to 0$, for the case  $0<\alpha<1$, and put forward the
corresponding matching property and density results as the main
technical tools. In Sections 5-6, we present the details of our proofs. Section 7 is
dedicated to the study of not necessarily convex $W^{2,2}$ solutions  
of the Monge-Amp\`ere equation, using some key observations by \v{S}ver\'ak from his unpublished manuscript 
\cite{Sve}. These results are a necessary ingredient of the density result in Section 6.  

\section{Elastic shells of low curvature} 
Let $\Omega\subset\mathbb{R}^2$ be open, bounded and simply connected set. 
For a given out-of-plate displacement 
$v_0\in W^{2,2}(\Omega,\mathbb{R})\cap \mathcal{C}^2(\bar\Omega)$, and
for a given exponent
$\alpha>0$, consider a sequence of surfaces:
\begin{equation*}
S_h=\phi_h(\Omega) \quad \mbox{where } \phi_h(x) = \big(x, h^\alpha v_0(x)\big)
\qquad \forall x=(x_1, x_2)\in\Omega,
\end{equation*}
and the family of thin plates $\Omega^h = \Omega\times (-h/2, h/2)$
and thin shallow shells $(S_h)^h$:
\begin{equation}\label{shallow} 
(S_h)^h = \big\{\tilde \phi_h(x, x_3); ~ x\in\Omega, ~ x_3\in(-h/2,
h/2)\big\} \qquad \forall 0<h\ll 1.
\end{equation}
Above, the Kirchhoff-Love extension $\tilde\phi_h:\Omega^h \rightarrow \mathbb{R}^3$ of
the parametrization $\phi_h$, is given by the formula:
\begin{equation}\label{kl}
\tilde\phi_h(x, x_3) = \phi_h(x) + x_3\vec n^h(x)
\qquad\forall (x, x_3)\in\Omega^h,
\end{equation}
while the vector $\vec n^h(x)$ is the unit normal to $S_h$ at $\phi_h(x)$:
$$\vec n^h(x) = 
\frac{\partial_1\phi_h(x) \times \partial_1\phi_h(x)}
{|\partial_1\phi_h(x) \times \partial_1\phi_h(x)|}
= \frac{1}{\sqrt{1+h^{2\alpha} |\nabla v_0|^2}} \big(-h^\alpha \partial_1v_0(x),
-h^\alpha \partial_2v_0(x), 1\big) \qquad\forall x\in\Omega. $$

\medskip
 
The thickness averaged elastic energy of a deformation $u^h$ of
$(S_h)^h$ is now given by:
\begin{equation}\label{IhW}
I^h(u^h)  = \frac{1}{h}\int_{(S_h)^h} W(\nabla u^h)
\qquad \forall u^h\in W^{1,2}((S_h)^h,\mathbb{R}^3).
\end{equation}
The energy density $W:\mathbb{R}^{3\times 3}\longrightarrow
\bar{\mathbb{R}}_{+}$ above, in addition to being $\mathcal{C}^2$ regular in a neighborhood of $SO(3)$,
is assumed to satisfy the normalization, frame indifference and non\-de\-ge\-ne\-racy conditions:
\begin{equation}\label{cond}
\begin{split}
\exists c>0\quad \forall F\in \mathbb{R}^{3\times 3} \quad
\forall R\in SO(3) \qquad
&W(R) = 0, \quad W(RF) = W(F),\\
&W(F)\geq c~ \mathrm{dist}^2(F, SO(3)).
\end{split}
\end{equation}
where $F=\nabla u$, is typically the deformation gradient associated with a mapping $u$.
The following quadratic forms, generated by $W$, will be relevant in the subsequent analysis:
\begin{equation}\label{quad}
\mathcal{Q}_3(F) = D^2 W(\mbox{Id})(F,F), \qquad
\mathcal{Q}_2(F_{tan}) = \min\{\mathcal{Q}_3(\tilde F); ~~ \tilde
F\in\mathbb{R}^{3\times 3}, ~~ (\tilde F - F)_{tan} = 0\}.
\end{equation}
The form $\mathcal{Q}_3$ is defined for  all $F\in\mathbb{R}^{3\times 3}$, 
while $\mathcal{Q}_2$ is defined on the $2\times 2$ principal 
minors $F_{tan}$ of such matrices.
By (\ref{cond}), both forms $\mathcal{Q}_3$ and all $\mathcal{Q}_2$ are nonnegative
definite and depend only on the  symmetric parts of their arguments.

\medskip

Let $f^h\in L^2((S_h)^h, \R^3)$ be a family of loads applied to
the elastic shells under consideration. The total energy is then:
\begin{equation}\label{Jh}
J^h(u^h)  = \frac{1}{h}\int_{(S_h)^h} W(\nabla u^h) - \frac 1h \int_{(S_h)^h} f^h u^h 
\qquad \forall u^h\in W^{1,2}((S_h)^h,\mathbb{R}^3). 
\end{equation} 
In what follows, we will make the simplifying assumptions:
\begin{equation}\label{forces} 
f^h = (0,0, h^{\alpha'}  f \circ \tilde \phi_h^{-1})^T
\end{equation} 
for some $f\in L^2(\Omega)$, normalizes so that: 
\begin{equation}\label{force-properties}
\ds \int_\Omega f = 0\quad \mbox{ and }\quad \int_{\Omega} x f(x)~\mbox{d}x =0.
\end{equation} 

Heuristically, stronger forces ($\alpha' <\alpha+2)$ deform the shallow
shell beyond the reference shape, while 
weaker forces ($\alpha'>\alpha+2$) leave it undeformed, with an
asymptotic behavior of displacements of lower order  
similar to that of a plate. The remaining case where the forces are
tuned with the curvature of the mid-surface (shallowness), 
is given by the scaling regime $\alpha'= \alpha+2$. 

\medskip

When  $\alpha\geq 1$, by a simple change of variables, we see that:
\begin{equation*}
\begin{split}
J^h(u^h) = \frac{1}{h}\int_{\Omega^h} W\Big((\nabla
v^h) (b^h)^{-1}\Big) \det b^h - \frac 1h  \int_{\Omega^h} h^{\alpha_i}
fv^h_3 \det b^h,
\end{split}
\end{equation*}
with $v^h=u^h\circ\tilde\phi_h\in W^{1,2}(\Omega^h,\mathbb{R}^3)$
and $b^h = \nabla \tilde\phi_h$. Note that
by the polar decomposition of positive definite matrices:
$ b^h = R(x,x_3) a^h $ for some $R(x,x_3)\in SO(3)$ and the symmetric tensor $a^h=\sqrt{(b^h)^T b^h}$.
Therefore, for the isotropic energy $W$ i.e. when:
\begin{equation}\label{iso}
W(FR) = W(F) \qquad \forall 
F\in\mathbb{R}^{3\times 3} \quad \forall R\in SO(3)
\end{equation}
one obtains:
\begin{equation}\label{changev}
\begin{split}
I^h(u^h) & = \frac{1}{h}\int_{\Omega^h} W\Big((\nabla
v^h) (a^h)^{-1}R(x, x_3)^{-1}\Big) \det b^h~\mbox{d}(x, x_3) \\ & 
=  \frac{1}{h}\int_{\Omega^h} W\Big((\nabla v^h) (a^h)^{-1}\Big) (1+
\mathcal{O}(h))~\mbox{d}(x, x_3),
\end{split}
\end{equation}
hence reducing the problem to studying deformations of the
flat plate $\Omega^h$ relative to the prestrain tensor $a^h$, see \cite{lemapa, LMP-newbefore} for 
a discussion of this topic.

\medskip

When $\alpha =1$, we derived in \cite{LMP-newbefore}
the $\Gamma$-limit $\mathcal{J}_{v_0}$ of the scaled energies
$\frac{1}{h^4} I^h$. Namely, we showed that the energy of the almost 
minimizing deformations scales like: 
$$\inf J^h\sim h^4,$$ 
and that the $\Gamma$-limit (in the general, possibly non-isotropic case) is given by: 
\begin{equation}\label{vonKnew1}
\begin{split}
\mathcal{J}_{v_0}(w,v)= 
\frac{1}{2} 
\int_\Omega \mathcal{Q}_2&\left(\mathrm{sym }\nabla w 
+\frac{1}{2}\nabla v\otimes \nabla v - \frac{1}{2}\nabla
v_0\otimes\nabla v_0 \right )\\
&\qquad\qquad
 + \frac{1}{24} \int_\Omega \mathcal{Q}_2\Big(\nabla^2 v - \nabla^2v_0\Big) - \int_{\Omega} fv,
\end{split}
\end{equation}
which is the von K\'arm\'an-type functional defined for all out-of-plane displacements $v\in
W^{2,2}(\Omega,\mathbb{R})$ and the in-plane displacements $w\in
W^{1,2}(\Omega, \mathbb{R}^2)$.
In analogy with the theory for flat plates with 
\cite{FJMhier}, due to the choice of scaling in (\ref{forces}), the limit energy is
composed of two terms,  corresponding to stretching and bending. 

In the isotropic case (\ref{iso}), the Euler-Lagrange equations of
$\mathcal{J}_{v_0}$ are:
\begin{equation}\label{new_Karman}
\left \{ \begin{split}
\Delta^2\Phi & = -S(\det \nabla^2 v -\det\nabla^2 v_0)\\
B(\Delta^2v-\Delta^2 v_0) &= [v,\Phi] + f
\end{split}\right.
\end{equation} 
where $S$ is the Young modulus, $B$  the bending stiffness,
$\nu$  the Poisson ratio (given in terms of  the Lam\'e constants
$\mu$ and $\lambda$). 
A more involved version of the system (\ref{new_Karman}) incorporating prestrain was first introduced in
\cite{Maha2} using a thermoelastic analogy to growth, as a
mathematical model of blooming activated by differential lateral
growth from an initial non-zero transverse displacement field $v_0$ (See also \cite{Maha, BenAmar}). 
See \cite{LMP-newbefore} for the rigorous derivation of that model. 

\medskip

When $\alpha>1$,  the bending energy takes over the stretching and hence the limiting theory is a variant of the
linear elasticity as discussed in \cite{FJMhier}, yielding the Euler-Lagrange equations:
\begin{equation*}\label{old_Karman}
B(\Delta^2v - \Delta^2 v_0) =  f.
\end{equation*} 

\medskip
We now turn to the case of interest treated in this paper, namely $0<\alpha <1$.

\section{The main results}\label{secres}

Our first main result regards the matching of isometries on convex weakly shallow shells.

%As a first step towards the rigorous proof of this statement, and 
%to shed light on the constraint condition (\ref{constr}), we
%now derive a matching property for weakly
%shallow shells.

\medskip

\begin{theorem}\label{th8.1}
Assume that $\Omega$ is simply connected and let
$v_0\in\mathcal{C}^{2,\beta}(\bar\Omega,\mathbb{R})$ with 
$\det\nabla^2v_0>c_0>0$. Let
$v\in\mathcal{C}^{2,\beta}(\bar\Omega,\mathbb{R})$ satisfy:
\begin{equation}\label{constraint}
\det\nabla^2 v = \det\nabla^2 v_0 \quad \mbox{ in } \Omega.
\end{equation} 
Then there exists a
sequence $w_h\in\mathcal{C}^{2,\beta}(\bar\Omega,\mathbb{R}^3)$ such
that:
\begin{equation}\label{metric}
\forall h>0 \quad \nabla(\mathrm{id} + hve_3 + h^2w_h)^T
\nabla(\mathrm{id} + hve_3 + h^2w_h) = \nabla(\mathrm{id} + hv_0e_3)^T
\nabla(\mathrm{id} + hv_0e_3) 
\end{equation}
and $\sup \|w_h\|_{\mathcal{C}^{2,\beta}}<+\infty$.
\end{theorem}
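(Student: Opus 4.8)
Let me think about this. We need to continue a second-order isometry $hve_3$ on the shallow shell $(\mathrm{id} + hv_0e_3)$ to an exact isometry by adding a correction $h^2w_h$. Expanding the metric constraint (\ref{metric}), the left side is $\mathrm{Id} + h^2(\nabla v\otimes\nabla v) + 2h^2\,\mathrm{sym}\nabla w_h + h^3(\dots) + h^4(\dots)$ and the right side is $\mathrm{Id} + h^2(\nabla v_0\otimes\nabla v_0)$. So at order $h^2$ we need $2\,\mathrm{sym}\nabla w_h^{tan} = \nabla v_0\otimes\nabla v_0 - \nabla v\otimes\nabla v + (\text{higher order corrections in }h)$, and the remaining components (involving $w_h^3$, the out-of-plane part) must absorb the $h^3$ and $h^4$ terms. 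The key algebraic fact is that the curl-curl (i.e. the obstruction to solving $2\,\mathrm{sym}\nabla\cdot = M$ for a symmetric matrix field $M$) of $\nabla v\otimes\nabla v - \nabla v_0\otimes\nabla v_0$ is precisely $-2(\det\nabla^2 v - \det\nabla^2 v_0)$, which vanishes by (\ref{constraint}). This is the linear-elliptic solvability that makes the leading-order correction exist.

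The plan is to set this up as a fixed-point / implicit-function argument in H\"older spaces. First I would write $w_h = w_h^{tan}\,e_{tan} + w_h^3\, e_3$ and organize the full (nonlinear in $h$) system for the three scalar unknowns. Collecting all terms, the equation $\nabla(\mathrm{id}+hve_3+h^2w_h)^T\nabla(\mathrm{id}+hve_3+h^2w_h) = \nabla(\mathrm{id}+hv_0e_3)^T\nabla(\mathrm{id}+hv_0e_3)$ becomes, after dividing by $h^2$, an equation of the schematic form $\mathcal{L}(w_h) = \mathcal{F}(h, w_h, \nabla w_h)$, where $\mathcal{L}$ is the linearized operator $w\mapsto (2\,\mathrm{sym}\nabla w^{tan}, 2\nabla w^3 + 2w^3\nabla v_0 + \dots)$ — here I'd be careful to extract the genuinely $h$-independent principal part — and $\mathcal{F}$ collects everything that is either higher order in $h$ or quadratic in $w_h$. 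The solvability of the linear problem $\mathcal{L}(w) = (M, p)$ for given data $(M,p)$, with $M$ symmetric satisfying the compatibility condition $\mathrm{curl\,curl}\,M = $ (the appropriate expression), is exactly the classical Saint-Venant / Poincar\'e-lemma statement on the simply connected domain $\Omega$, and this is where $\det\nabla^2 v_0 > c_0 > 0$ and the $\mathcal{C}^{2,\beta}$ regularity of $v_0, v$ enter: the compatibility condition for the leading order is guaranteed by (\ref{constraint}), and the positivity of $\det\nabla^2 v_0$ ensures the linear map governing the out-of-plane component $w^3$ is invertible (it involves $\nabla^2 v_0$ as a coefficient). I would then run a contraction mapping in a ball of $\mathcal{C}^{2,\beta}(\bar\Omega,\mathbb{R}^3)$: the nonlinear remainder $\mathcal{F}$ is small and Lipschitz on such a ball with constant $O(h)$, so for $h$ small a unique fixed point $w_h$ exists with $\|w_h\|_{\mathcal{C}^{2,\beta}}$ uniformly bounded.

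There is one subtlety I should flag: the compatibility condition must hold not only at leading order but after the nonlinear terms are fed back in, i.e. the right side $\mathcal{F}(h,w_h,\nabla w_h)$ must itself satisfy the Saint-Venant condition at each iteration step. The standard device is to split $w_h^{tan}$ off using a Bogovskii-type or potential-theoretic right inverse and to observe that the structure of the equations — the $h^3$ and $h^4$ corrections involve only $w_h^3$ and its derivatives paired against $\nabla v, \nabla v_0$ — allows one to first solve for $w_h^3$ from the out-of-plane (third row/column) equations, treating the tangential equation as a constraint that is then solved last by the linear right inverse, whose solvability is unconditional once the curl-curl of the data is checked. So the genuine iteration is really on $w_h^3$ alone, with $w_h^{tan}$ slaved to it.

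The main obstacle, as I see it, is organizing the expansion so that the $h$-independent linear operator is genuinely invertible with $h$-uniform bounds and so that the compatibility (Saint-Venant) condition is preserved through the iteration — in other words, decoupling the system correctly into "solve for $w^3$ by a nondegenerate (thanks to $\det\nabla^2 v_0 > c_0$) equation, then recover $w^{tan}$ by the classical right inverse." The purely analytic contraction estimate afterward is routine. A secondary technical point is boundary behavior: since no boundary conditions are imposed on $w_h$, the right inverses can be constructed on all of $\Omega$ (or a slightly larger domain after extending $v_0, v$), so $\mathcal{C}^{2,\beta}(\bar\Omega)$ bounds follow from interior plus up-to-boundary Schauder-type estimates for the div/curl system.
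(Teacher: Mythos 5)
Your strategy is viable and rests on the same two mechanisms as the paper's proof --- ellipticity of the polarized Monge--Amp\`ere operator $w^3\mapsto \mathrm{cof}\,\nabla^2 v:\nabla^2 w^3$ (invertible because $\det\nabla^2 v=\det\nabla^2 v_0\geq c_0>0$; note the coefficient that actually appears in the linearization is $\nabla^2 v$, not $\nabla^2 v_0$) and simple-connectedness for recovering the tangential part --- but the execution in the paper is different and avoids the delicate point you flag. The paper does not run a coupled contraction with a Saint-Venant right inverse: it rewrites (\ref{metric}) as (\ref{metric3}), i.e.\ the requirement that $\mathrm{id}+h^2w_{h,tan}$ be an \emph{exact} isometric immersion of the metric $g_h(z_h)=\mathrm{Id}+h^2\nabla v_0\otimes\nabla v_0-h^2\nabla(v+z_h)\otimes\nabla(v+z_h)$ with $z_h=hw_h^3$, and observes that the exact solvability condition is the vanishing of the Gauss curvature of $g_h(z_h)$ --- a single scalar equation $\Phi(h,z_h)=0$ in $z_h$ alone, in which $w_{h,tan}$ never appears (Lemma \ref{lem8.2}). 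This is solved by the implicit function theorem in $\mathcal{C}^{2,\beta}_0$, the partial derivative at $(0,0)$ being exactly $-\mathrm{cof}\,\nabla^2 v:\nabla^2$; then $w_{h,tan}$ is produced not by a linear right inverse but by the isometric immersion theorem for flat $\mathcal{C}^{1,\beta}$ metrics on simply connected domains (Mardare), and the uniform $\mathcal{C}^{2,\beta}$ bound together with the normal form $\phi_h=\mathrm{id}+h^2w_{h,tan}$ is extracted from the immersion system $\nabla^2\phi_h=[\Gamma_{ij}^k\partial_k\phi_h]$, the smallness $\|\Gamma_{ij}^k\|_{\mathcal{C}^{0,\beta}}\leq Ch^2$, and a closeness-to-rotations argument. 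The geometric reformulation buys exactly what your scheme must work for by hand: the full nonlinear compatibility is encoded once and for all in the curvature, so nothing has to be re-checked or projected along an iteration.

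Two points in your plan need repair before it closes. First, the claim that the $h^3$ and $h^4$ corrections involve only $w_h^3$ is inaccurate: by (\ref{metric2}) the $h^4$ term contains $(\nabla w_{h,tan})^T\nabla w_{h,tan}$, so the scalar compatibility equation you solve for $w_h^3$ depends on the current $w_{h,tan}$ and the iteration is genuinely coupled. It can still be organized (define the updated $w^3$ so that the $\mathrm{curl}^T\mathrm{curl}$ of the data built from it and the \emph{old} $w_{h,tan}$ vanishes identically; then each tangential Saint-Venant solve is exact, and only at the fixed point does the metric equation hold), but without this precaution the tangential system is simply not solvable at intermediate steps. Second, the contraction in $\mathcal{C}^{2,\beta}$ survives only because $\mathrm{curl}^T\mathrm{curl}$ of the quadratic terms loses no derivatives: $\mathrm{curl}^T\mathrm{curl}\big((\nabla w_{tan})^T\nabla w_{tan}\big)=-2\big(\det\nabla^2 w_1+\det\nabla^2 w_2\big)$ and similarly for $\nabla w^3\otimes\nabla w^3$, by the same polarization identity that produces the leading operator $\mathrm{cof}\,\nabla^2 v:\nabla^2 w^3$. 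A naive derivative count on these terms suggests third derivatives of $w_{h,tan}$ and hence a loss of one derivative, which would destroy the fixed point in $\mathcal{C}^{2,\beta}$; this cancellation should be stated explicitly, and it is precisely what the paper's curvature formulation gets for free.
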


We will explain the meaning of the ``matching property'' and put our
analysis in a broader context in Section \ref{match_explain}, while we
give its proof in Section \ref{sec5}.

\medskip

Our next main result concerns the density of regular solutions to the
elliptic 2d Monge-Amp\`ere equation.

\begin{theorem}\label{thm-density}
Let $\Omega$ be open, bounded, connected and star-shaped with respect to an
interior ball $B\subset \Omega$. For a fixed constant $c_0>0$, define:
$$ {\mathcal A}:= \big\{ u\in W^{2,2}(\Omega); ~~ \det\nabla^2 u=c_0
    \mbox{ a.e. in } \Omega \big\}. $$ 
Then $\mathcal A \cap C^\infty( \bar \Omega)$ is dense in ${\mathcal A}$ with 
respect to the strong $W^{2,2}$ norm.
\end{theorem}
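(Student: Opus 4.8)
The plan is to proceed by a mollification argument, but with care taken to preserve the Monge-Ampère constraint $\det\nabla^2 u = c_0$ under the approximation. The naive mollification $u_\epsilon = u * \eta_\epsilon$ fails to satisfy the constraint, so the strategy is: (i) first reduce to solutions that are strictly convex and smooth up to the boundary with a margin to spare; (ii) exploit the fact that for a convex function, $\det\nabla^2 u = c_0$ is equivalent to saying that the gradient map $\nabla u$ pushes Lebesgue measure forward to $c_0$ times Lebesgue measure, i.e. $u$ is (up to normalization) a Brenier/Alexandrov solution of an optimal transport problem; (iii) perturb the target or the domain slightly, solve the corresponding smooth Monge-Ampère problem using classical Caffarelli--Pogorelov regularity theory, and show the smooth solutions converge to $u$ in $W^{2,2}$.

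Concretely, I would first use the star-shapedness with respect to the ball $B$ to perform a domain rescaling: set $u^\lambda(x) = \lambda^{-2} u(\lambda x)$ on the slightly larger domain $\lambda^{-1}\Omega \supset\supset \Omega$ for $\lambda < 1$ close to $1$ (here the star-shapedness guarantees the dilated domains are nested and exhaust $\Omega$). Each $u^\lambda$ still satisfies $\det\nabla^2 u^\lambda = c_0$ a.e., now on a neighborhood of $\bar\Omega$, and $u^\lambda \to u$ in $W^{2,2}(\Omega)$ as $\lambda\to 1$ by continuity of translations/dilations in $L^2$ applied to $\nabla^2 u$. So it suffices to approximate a $W^{2,2}$ solution defined on a neighborhood of $\bar\Omega$. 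For such a function, I would examine its convexity: a $W^{2,2}$ function with $\det\nabla^2 u = c_0 > 0$ a.e. has $\nabla^2 u$ of a.e. constant sign of the determinant, and by the results of Section 7 (the Šverák-type observations promised in the introduction for not-necessarily-convex $W^{2,2}$ solutions) one reduces to the case where $u$ is convex (after possibly replacing $u$ by $-u$ and restricting to a subdomain where the Hessian is sign-definite — the key input being that the zero set of $\det\nabla^2 u$ is empty here, removing the degeneracy present in the $\det\nabla^2 v=0$ case of \cite{FJMhier}).

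Once $u$ is convex on a neighborhood $U$ of $\bar\Omega$ with $\det\nabla^2 u = c_0$, I would mollify to get $u_\epsilon = u*\eta_\epsilon$, which is smooth and convex on $\bar\Omega$ for small $\epsilon$, with $\det\nabla^2 u_\epsilon \to c_0$ in, say, $L^1(\Omega)$ (in fact the Hessians converge in $L^2$, so $\det\nabla^2 u_\epsilon = c_0 + r_\epsilon$ with $\|r_\epsilon\|_{L^1}\to 0$ and $r_\epsilon$ bounded in $L^1$; one can further arrange $r_\epsilon$ to be continuous). Now I solve the Dirichlet problem $\det\nabla^2 w_\epsilon = c_0$ in $\Omega$, $w_\epsilon = u_\epsilon$ on $\partial\Omega$: since $u_\epsilon$ is smooth and uniformly convex with a uniform lower bound on $\nabla^2 u_\epsilon$ (inherited from $\det\nabla^2 u = c_0$ plus an a priori $C^{1,1}$ bound, or obtained by a further small regularization), Caffarelli's and Pogorelov's interior and boundary estimates for the Monge-Ampère equation give $w_\epsilon \in C^\infty(\bar\Omega)$ with $\nabla^2 w_\epsilon$ uniformly bounded and uniformly positive definite, independently of $\epsilon$. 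The comparison principle applied to $u_\epsilon$ and $w_\epsilon$ (using $\det\nabla^2 u_\epsilon$ close to $c_0$) gives $\|u_\epsilon - w_\epsilon\|_{L^\infty(\Omega)} \to 0$; combined with the uniform $C^{2,\gamma}$ bounds and interpolation this upgrades to $w_\epsilon \to u$ in $C^2_{loc}$, and then the uniform Hessian bounds plus dominated convergence give $w_\epsilon \to u$ strongly in $W^{2,2}(\Omega)$. The functions $w_\epsilon \in \mathcal A \cap C^\infty(\bar\Omega)$ are the desired approximants.

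The main obstacle I anticipate is the passage from $u_\epsilon \to u$ to $w_\epsilon \to u$ in the \emph{strong} $W^{2,2}$ norm, rather than merely weak convergence or $W^{2,2}_{loc}$ convergence: controlling $\nabla^2 w_\epsilon$ up to $\partial\Omega$ requires the global (boundary) regularity theory for Monge-Ampère, which in turn needs the boundary data $u_\epsilon|_{\partial\Omega}$ and the domain $\Omega$ to be compatible (e.g. $\Omega$ uniformly convex, or at least the boundary data extending to a globally convex function — which holds here since $u_\epsilon$ is convex on all of $\bar\Omega$). A secondary delicate point is handling the non-convex case in Step (ii): ensuring that the Šverák-type structure theory from Section 7 genuinely reduces the general $W^{2,2}$ solution to (a countable union of) convex pieces on which the above argument can be run and then glued back. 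If $\Omega$'s geometry is too weak for global estimates, the fallback is the domain-exhaustion from the first paragraph: approximate $u$ on $\Omega$ by its restrictions to smooth uniformly convex subdomains $\Omega' \subset\subset \Omega$ and diagonalize, at the cost of a slightly more careful bookkeeping to recover convergence up to $\partial\Omega$.
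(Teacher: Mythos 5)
Your first paragraph --- dilating $u_\lambda(x)=\lambda^{-2}u(\lambda x)$, using star-shapedness with respect to the ball $B$ so that $u_\lambda$ solves $\det\nabla^2 u_\lambda=c_0$ on $\lambda^{-1}\Omega$ (which contains $\bar\Omega$ in its interior), and noting $u_\lambda\to u$ strongly in $W^{2,2}(\Omega)$ --- is exactly the paper's argument. What you miss is that at this point you are already done: since the right-hand side $c_0$ is constant, hence smooth, the \emph{interior} regularity theory for $W^{2,2}$ solutions (the paper's Theorem \ref{thm-regularity}, built on the \v{S}ver\'ak-type convexity statement of Theorem \ref{thm-convex}, the verification that such a solution is an Aleksandrov solution, and Caffarelli's $\mathcal{C}^{2,\beta}$ estimates with bootstrap) applies on the larger domain $\lambda^{-1}\Omega$ and gives $u_\lambda\in \mathcal{C}^\infty$ on a neighborhood of $\bar\Omega$. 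The dilations themselves are therefore the approximants in $\mathcal A\cap \mathcal{C}^\infty(\bar\Omega)$; no mollification and no auxiliary boundary value problem are needed.

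The detour you take instead contains a genuine gap. Mollification destroys the constraint, and you propose to restore it by solving $\det\nabla^2 w_\epsilon=c_0$ in $\Omega$ with boundary data $u_\epsilon$, claiming uniform $\mathcal{C}^{2,\gamma}(\bar\Omega)$ bounds from Caffarelli--Pogorelov boundary regularity. But $\Omega$ is only assumed star-shaped with respect to a ball: it need not be convex and carries no boundary smoothness, so (i) the Dirichlet problem for the Monge-Amp\`ere equation in the class of convex functions need not even be solvable on $\Omega$, and (ii) the boundary estimates you invoke require a smooth, uniformly convex domain and correspondingly regular data, so the uniform up-to-the-boundary Hessian control --- precisely what you identify as needed to pass from $\mathcal{C}^2_{loc}$ to strong $W^{2,2}(\Omega)$ convergence --- is unavailable. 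Your fallback (exhaustion by uniformly convex subdomains plus diagonalization) does not repair this: restrictions to $\Omega'\subset\subset\Omega$ are not elements of $\mathcal{C}^\infty(\bar\Omega)\cap\mathcal A$, so they cannot serve as the required approximants. The correct repair is simply to drop the mollification/Dirichlet-problem step and apply the interior regularity theorem to $u_\lambda$ on the dilated domain.
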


We prove Theorem \ref{thm-density} by showing first the interior regularity of solutions
and then by applying a simple scaling argument.
The main difficulty to overcome is the absence of convexity assumptions on the 
$W^{2,2}$ solutions of the Monge-Amp\`ere equation in our context. We 
first establish the convexity of elements of ${\mathcal A}$ in a broader sense,
combining some key observations by \v{S}ver\'ak from his unpublished manuscript 
\cite{Sve}, with a theorem due to Iwaniec and \v{S}ver\'ak in
\cite{IwanSve} (Theorem \ref{thm-IS}) on deformations with integrable dilatation in dimension $2$.  
Some of the results in \cite{Sve} are known within the community,
but for completeness we write the proofs in full detail in Sections 6
and 7.

\medskip

Our remaining results concern the variational limit of the elastic
energy functionals (\ref{Jh}). We first state the following lemma, whose proof is similar to  
\cite[Theorem 2-i]{FJMhier} and \cite[Theorem 2.5]{lemopa7}, and hence
it is omitted for brevity of presentation.

\begin{lemma}\label{energy-scaling}
Assume (\ref{forces}) and (\ref{force-properties}). Then:
\begin{itemize}
\item[(i)] For every small $h>0$ one has:
$$0\geq \inf\left\{\frac{1}{h^{2\alpha+2}} J^h(u^h); ~~ u^h\in W^{1,2}((S_h)^h, \mathbb{R}^3)
\right\}\geq -C.$$
\item[(ii)]  If $u^h\in W^{1,2}((S_h)^h, \mathbb{R}^3)$ is a minimizing sequence
of $\frac{1}{h^{2\alpha+2}} J^h$, that is:
\begin{equation}\label{min_seq}
\lim_{h\to 0} \left(\frac{1}{h^{2\alpha+2}}J^h(y^h) - \inf\frac{1}{h^{2\alpha+2}}J^h \right)=0,
\end{equation}
then  $\frac 1{h^{2\alpha+2}} I^h(u^h)$ is bounded. 
\end{itemize}
\end{lemma}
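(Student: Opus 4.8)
The plan is to follow the standard route for energy--scaling lemmas in dimension reduction: an explicit rigid--motion competitor for the upper bound in (i), and the geometric rigidity estimate of \cite{FJMgeo} (together with Poincar\'e's inequality and the moment conditions on $f$) to control the force term for the lower bound in (i) and for (ii).

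\emph{Upper bound in (i).} Every rigid motion $y\mapsto Ry+b$, $R\in SO(3)$, $b\in\R^3$, has vanishing elastic energy, so $\inf\frac1{h^{2\alpha+2}}J^h\le-\frac1{h^{2\alpha+3}}\int_{(S_h)^h}f^h\cdot(R\,\mathrm{id}+b)$. Using (\ref{forces}), changing variables through $\tilde\phi_h$, and invoking the expansions $\det\nabla\tilde\phi_h=1+\mathcal O(h^{2\alpha})$ and $(\tilde\phi_h)_3(x,x_3)=h^\alpha v_0(x)+x_3+\mathcal O(h^{1+2\alpha})$ in combination with (\ref{force-properties}) (which annihilate, up to order $h^{2\alpha+3}$, the contributions of $b$ and of the in-plane columns of $R$), one computes $\frac1{h^{2\alpha+3}}\int_{(S_h)^h}f^h\cdot(R\,\mathrm{id}+b)=R_{33}\int_\Omega fv_0+o(1)$. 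Choosing $R$ with $R_{33}\,\mathrm{sign}(\int_\Omega fv_0)=1$ makes this value nonpositive for $h$ small; the borderline case $\int_\Omega fv_0=0$ is handled by pushing the same expansion one order further, or by testing with a recovery sequence for the trivial limiting displacement $v=v_0$.

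\emph{Force estimate via rigidity.} Fix $u^h\in W^{1,2}((S_h)^h,\R^3)$. By (\ref{cond}), $\int_{(S_h)^h}\mathrm{dist}^2(\nabla u^h,SO(3))\le\frac1c\int_{(S_h)^h}W(\nabla u^h)=\frac hc I^h(u^h)$. Applying the geometric rigidity estimate on the thin curved body $(S_h)^h$ — after the usual rescaling of the transverse variable and using that the $\tilde\phi_h$ are uniformly bi-Lipschitz for small $h$ — yields a constant rotation $R^h\in SO(3)$ and, via Poincar\'e's inequality, a vector $b^h\in\R^3$ with
\[
\|\nabla u^h-R^h\|^2_{L^2((S_h)^h)}+\|u^h-(R^h\,\mathrm{id}+b^h)\|^2_{L^2((S_h)^h)}\le C\,h\,I^h(u^h),
\]
with $C$ independent of $h$ (for a minimizing sequence one may moreover take $R^h,b^h$ bounded, so that by the computation above $\frac1h|\int_{(S_h)^h}f^h\cdot(R^h\,\mathrm{id}+b^h)|\le Ch^{2\alpha+2}$). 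Since $\|f^h\|_{L^2((S_h)^h)}\le Ch^{\alpha+2}\|f\circ\tilde\phi_h^{-1}\|_{L^2((S_h)^h)}\le Ch^{\alpha+5/2}\|f\|_{L^2(\Omega)}$, Cauchy--Schwarz gives $\frac1h|\int_{(S_h)^h}f^h\cdot(u^h-R^h\,\mathrm{id}-b^h)|\le Ch^{\alpha+2}\sqrt{I^h(u^h)}$, hence
\[
J^h(u^h)\ge I^h(u^h)-Ch^{\alpha+2}\sqrt{I^h(u^h)}-Ch^{2\alpha+2}.
\]

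\emph{Conclusion.} Writing $a:=\sqrt{I^h(u^h)}\ge0$, the last inequality gives $\frac1{h^{2\alpha+2}}J^h(u^h)\ge\frac1{h^{2\alpha+2}}(a^2-Ch^{\alpha+2}a-Ch^{2\alpha+2})\ge-\frac{C^2h^2}{4}-C\ge-C'$, which is the lower bound in (i). For (ii), if $u^h$ is a minimizing sequence then (\ref{min_seq}) and the upper bound force $\frac1{h^{2\alpha+2}}J^h(u^h)\le o(1)$, so $a^2-Ch^{\alpha+2}a-Ch^{2\alpha+2}\le o(h^{2\alpha+2})$, which forces $a\le C''h^{\alpha+1}$ and therefore $\frac1{h^{2\alpha+2}}I^h(u^h)\le (C'')^2$. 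The only genuinely delicate point is the $h$-uniform rigidity estimate on the thin, curved bodies $(S_h)^h$: the naive constant in Friesecke--James--M\"uller's inequality degenerates with the thickness, and one must rescale the transverse variable and transfer the estimate through the diffeomorphisms $\tilde\phi_h=\mathrm{id}+\mathcal O(h^\alpha)$, exactly as in \cite{FJMgeo,FJMhier,lemopa7}. Everything else is bookkeeping with the scaling exponents, the key cancellation being $h^{\alpha+2}\cdot\sqrt{h^{2\alpha+2}}\cdot h^{-(2\alpha+2)}=h\to0$, which makes the force term a vanishing perturbation of $\frac1{h^{2\alpha+2}}I^h$.
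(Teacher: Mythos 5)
Your overall route --- a rigid--motion competitor for the upper bound, the geometric rigidity estimate of \cite{FJMgeo} transferred to the thin shallow shells, the splitting of the load term using (\ref{forces})--(\ref{force-properties}), and the final Young-type bookkeeping with $a^2-Ch^{\alpha+2}a-Ch^{2\alpha+2}$ --- is exactly the argument the paper has in mind when it refers to \cite[Theorem 2]{FJMhier} and \cite[Theorem 2.5]{lemopa7}, and most of your scaling computations are right. One simplification for the upper bound: since only the third component of $f^h$ is nonzero, the load terms of the two rigid motions $u(y)=y$ and $u(y)=\mathrm{diag}(1,-1,-1)\,y$ are exact negatives of each other, so one of them gives $J^h\le 0$ exactly; your borderline-case fallback is unnecessary, and note that testing a recovery sequence for $v=v_0$ would only yield $\inf\le o(1)$ rather than $\le 0$.

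The genuine gap is the estimate $\frac 1h\big|\int_{(S_h)^h}f^h\cdot(R^h\,\mathrm{id}+b^h)\big|\le Ch^{2\alpha+2}$, which your chain of inequalities needs for an \emph{arbitrary} competitor in part (i); there the parenthetical ``for a minimizing sequence one may take $R^h, b^h$ bounded'' is not available, and even in (ii) boundedness of $b^h$ is normally a by-product of the compactness analysis that presupposes the energy bound you are proving, so as written the step is circular. The rotation part is harmless, since $|R^h|\le\sqrt 3$ and the moments of $f^h_3$ against $y_1,y_2,y_3$ are $O(h^{2\alpha+3})$ by (\ref{forces}), (\ref{force-properties}); the delicate part is the translation term $b^h\cdot\frac 1h\int_{(S_h)^h}f^h\,\mathrm{d}y$. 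In the flat-plate proofs this coefficient vanishes identically, because the zero-resultant condition is imposed with respect to the same measure that appears in the load integral. Here it does not: (\ref{force-properties}) is a normalization on $\Omega$ with Lebesgue measure, while the integral in (\ref{Jh}) carries the Jacobian of $\tilde\phi_h$, whose $x_3$-average is $\sqrt{1+h^{2\alpha}|\nabla v_0|^2}+O(h^{2+2\alpha})$, so that $\int_{(S_h)^h}f^h_3\,\mathrm{d}y=h^{\alpha+3}\big(\tfrac{h^{2\alpha}}{2}\int_\Omega f|\nabla v_0|^2+o(h^{2\alpha})\big)$ is generically nonzero. A linear term $b^h_3\cdot O(h^{3\alpha+2})$ with unconstrained $b^h_3$ cannot be absorbed no matter how small its coefficient (with a nonzero resultant the infimum over all of $W^{1,2}$ is even $-\infty$), so you must make it vanish exactly --- for instance by replacing $f\circ\tilde\phi_h^{-1}$ with $f\circ\tilde\phi_h^{-1}-\frac{1}{|(S_h)^h|}\int_{(S_h)^h}f\circ\tilde\phi_h^{-1}$, an $O(h^{2\alpha})$ correction that affects neither the scaling nor the limit functional, or by an equivalent normalization of the admissible deformations --- and say so explicitly; without this step the inequality $J^h(u^h)\ge I^h(u^h)-Ch^{\alpha+2}\sqrt{I^h(u^h)}-Ch^{2\alpha+2}$ does not follow.
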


We now note, by a straightforward calculation, that $a^h$ in
(\ref{changev}), pertaining to the isotropic case (\ref{iso}),
becomes:
$$a^h = \mbox{Id} + \frac{1}{2} h^{2\alpha} (\nabla v_0\otimes \nabla
v_0)^* - h^\alpha x_3 (\nabla^2v_0)^* + o(h^{2\alpha}) + x_3 o(h^\alpha)$$
where the uniform quantities in $o(h^{2\alpha})$, $o(h^\alpha)$ are
independent of $x_3$. Following the proof of Theorem 1.3 in
\cite{lemapa}, we actually obtain in the general (possibly non-isotropic) case:

\begin{theorem}\label{cosik}
Assume that $u^h\in W^{1,2}((S_h)^h,\mathbb{R}^3)$ satisfies
$I^h(u^h)\leq Ch^{2\alpha+2}$, where $I^h$ is given in (\ref{IhW}) and
$0<\alpha<1$. Then there exists rotations $\bar R^h\in SO(3)$ and
translations $c^h\in\mathbb{R}^3$ such that  for the normalized
deformations:
\begin{equation}\label{resc}
y^h(x,t) = (\bar R^h)^T (u^h\circ \tilde \phi_h)(x, ht) - c^h : \Omega^1\longrightarrow\mathbb{R}^3
\end{equation}
defined by means of (\ref{kl}) on the common domain
$\Omega^1=\Omega\times (-1/2, 1/2)$ the following holds:
\begin{itemize}
\item[(i)] $y^h(x,t)$ converge in $W^{1,2}(\Omega^1,\mathbb{R}^3)$ to
  $x$.
\item[(ii)] the scaled displacements $V^h(x) = h^{-\alpha}
  \int_{-1/2}^{1/2}y^h(x,t) - x ~\mbox{d}t$ converge (up to a
  subsequence) in $W^{1,2}(\Omega,\mathbb{R}^3)$ to 
$(0,0,v)^T$ where $v\in W^{2,2}(\Omega, \mathbb{R})$ and:
\begin{equation}\label{constr}
\mathrm{det} \nabla^2 v = \mathrm{det}\nabla^2 v_0.
\end{equation}
\item[(iii)] Moreover: $\liminf_{h\to 0} h^{-(2\alpha+2)} J^h(u^h) \geq
  \mathcal{J}_{v_0} (v)$ where:
\begin{equation}\label{lin_new}
\mathcal{J}_{v_0}(v)= 
 \frac{1}{24} \int_\Omega \mathcal{Q}_2\Big(\nabla^2 v - \nabla^2v_0\Big) - \int_\Omega fv.
\end{equation}
\end{itemize}
\end{theorem}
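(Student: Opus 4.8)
The plan is to follow closely the proof of \cite[Theorem 1.3]{lemapa} (see also \cite{LMP-newbefore, FJMhier}), carrying along the lower-order corrections generated by the reference profile $v_0$. Write $v^h=u^h\circ\tilde\phi_h$, let $b^h=\nabla\tilde\phi_h=R_0^h a^h$ be the polar decomposition ($R_0^h\in SO(3)$, $a^h=\sqrt{(b^h)^Tb^h}$), and let $\nabla_h$ denote the rescaled gradient on $\Omega^1$ obtained after the substitution $x_3=ht$. The hypothesis $I^h(u^h)\le Ch^{2\alpha+2}$, rewritten via the change of variables as in (\ref{changev}) together with the coercivity in (\ref{cond}) and the fact that right multiplication by $SO(3)$ does not change the distance to $SO(3)$, gives $\int_{\Omega^1}\mathrm{dist}^2\big((\nabla_h y^h)(a^h)^{-1},SO(3)\big)\,\mathrm{d}(x,t)\le Ch^{2\alpha+2}$, where $y^h$ is the not-yet-normalised map of (\ref{resc}) and $a^h$ is, after $x_3=ht$, given by the expansion recorded above the statement, so that its tangential part reads $\mathrm{Id}_2-h^{1+\alpha}t\,\nabla^2v_0+\tfrac12h^{2\alpha}\nabla v_0\otimes\nabla v_0+o(h^{2\alpha})$. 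I would then invoke the geometric rigidity estimate of Friesecke--James--M\"uller, in the thin-film form with weakly varying prestrain used in \cite{lemapa}, to produce approximating maps $R^h\in W^{1,2}(\Omega,SO(3))$ with $\|(R^h)^T\nabla_hy^h-a^h\|_{L^2(\Omega^1)}\le Ch^{1+\alpha}$, $\|\nabla_xR^h\|_{L^2(\Omega)}\le Ch^\alpha$ and $\|R^h-\bar R^h\|_{L^2}\le Ch^\alpha$ for suitable constant rotations $\bar R^h$; since $b^h\to\mathrm{Id}$ uniformly, $R_0^h\to\mathrm{Id}$, so the non-isotropy of $W$ produces only $o(1)$ corrections in what follows. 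After replacing $\bar R^h$ by its limit (normalised to $\mathrm{Id}$) and choosing the translation $c^h$ so that (\ref{force-properties}) controls the residual affine part, one gets $\nabla_hy^h\to\mathrm{Id}$ and $y^h\to x$ in $W^{1,2}(\Omega^1,\mathbb{R}^3)$: this is (i).

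For (ii), the thickness-averaged displacement $V^h$ is $W^{1,2}$-bounded by the above, so $V^h\rightharpoonup V$; its first two components vanish in the limit, since the tangential displacement is of order $h^{2\alpha}=o(h^\alpha)$, while for $v^h=h^{-\alpha}V^h_3$ the bending portion of the energy bound forces $\nabla^2v^h$ to be $L^2$-bounded (it is, up to $o(1)$, the $t$-linear part of $h^{-(1+\alpha)}\big((R^h)^T\nabla_xy^h-a^h_{\mathrm{tan}}\big)$, i.e.\ the change of second fundamental form), so $v^h\rightharpoonup v$ in $W^{2,2}(\Omega,\mathbb{R})$ and $V=(0,0,v)^T$. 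Next, the discrepancy of the tangential first fundamental forms, $(\nabla_xy^h)^T\nabla_xy^h-(a^h)^2_{\mathrm{tan}}$, is $O(h^{1+\alpha})$ in $L^2$; writing the tangential displacement as $h^{2\alpha}w^h$ with $w^h$ $W^{1,2}$-bounded (Korn's inequality applied to its controlled symmetric part, with normalisation fixed as for $c^h,\bar R^h$), dividing by $h^{2\alpha}$ and passing to the limit — here the assumption $\alpha<1$ is used, since the remainder is $O(h^{1-\alpha})\to0$ — yields, in $L^2(\Omega)$, the identity $2\,\mathrm{sym}\,\nabla w+\nabla v\otimes\nabla v-\nabla v_0\otimes\nabla v_0=0$ for the weak limit $w$ of $w^h$. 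Applying $\mathrm{curl}\,\mathrm{curl}$, and using that $\nabla v^h\to\nabla v$ strongly in every $L^p(\Omega)$ (compact embedding $W^{1,2}\hookrightarrow L^p$ in dimension two), so that $\det\nabla^2v^h\to\det\nabla^2v$ as distributions, gives $\det\nabla^2v=\det\nabla^2v_0$, which is (\ref{constr}).

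For (iii), set $G^h=h^{-(1+\alpha)}\big((R^h)^T\nabla_hy^h-a^h\big)$; by the rigidity estimate $G^h$ is $L^2$-bounded (the potentially divergent $h^{2\alpha-(1+\alpha)}$-order contributions of the two sides cancel precisely because the energy controls the membrane discrepancy), so $G^h\rightharpoonup G$ along a subsequence. Taylor-expanding $W$ around $SO(3)$ and using frame indifference, nonnegativity and the weak lower semicontinuity of $\mathcal{Q}_3$, then minimising over the $e_3$-column of $G$ (which the energy does not see), one obtains $\liminf_{h\to0}h^{-(2\alpha+2)}I^h(u^h)\ge\tfrac12\int_{\Omega^1}\mathcal{Q}_2\big(\bar G(x,t)\big)\,\mathrm{d}(x,t)$, where $\bar G$ is the $2\times2$ principal minor of $G$. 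As in \cite{lemapa}, the structure of $R^h$ and the rescalings force $\bar G$ to be affine in $t$ with $\bar G(x,t)=\bar G_0(x)-t\big(\nabla^2v(x)-\nabla^2v_0(x)\big)$; since $\int_{-1/2}^{1/2}t\,\mathrm{d}t=0$ and $\int_{-1/2}^{1/2}t^2\,\mathrm{d}t=\tfrac1{12}$, discarding the nonnegative term $\mathcal{Q}_2(\bar G_0)$ gives $\liminf h^{-(2\alpha+2)}I^h(u^h)\ge\tfrac1{24}\int_\Omega\mathcal{Q}_2(\nabla^2v-\nabla^2v_0)$. Finally, since $\alpha'=\alpha+2$ in (\ref{forces}), the normalisations of $\bar R^h,c^h$ together with (\ref{force-properties}) (which kills the constant and the affine parts) and $V^h\to(0,0,v)^T$ strongly in $L^2$ give $h^{-(2\alpha+2)}\tfrac1h\int_{(S_h)^h}f^hu^h\to\int_\Omega fv$; adding this in yields $\liminf h^{-(2\alpha+2)}J^h(u^h)\ge\mathcal{J}_{v_0}(v)$, which is (iii).

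The step requiring the most care is the geometric rigidity estimate on the weakly curved reference together with the identification of the linear-in-$t$ part of $\bar G$ as $-(\nabla^2v-\nabla^2v_0)$: this is where one must make precise the interplay among the approximating rotation fields $R^h$, the prestrain $a^h$, and the two rescalings (in $x_3$ and in the displacement amplitude $h^\alpha$), exactly as in the plate case of \cite{FJMhier} and the shallow case of \cite{lemapa}. By contrast, passing the Monge--Amp\`ere constraint to the limit is comparatively soft here, thanks to the two-dimensional compact Sobolev embedding and to the assumption $\alpha<1$, which makes all remainder terms vanish.
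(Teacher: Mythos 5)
Your proposal is correct and takes essentially the same route as the paper, which proves Theorem \ref{cosik} by following the proof of Theorem 1.3 in \cite{lemapa} (geometric rigidity relative to the prestrain $a^h$, identification of the $t$-linear part of the limiting scaled strain as $-(\nabla^2 v-\nabla^2 v_0)$, Taylor expansion and lower semicontinuity for part (iii)), and obtains the constraint (\ref{constr}) exactly as you do: the $L^2$-limit of $h^{-\alpha}\mathrm{sym}\,\nabla V^h$ equals $\frac12(\nabla v_0\otimes\nabla v_0-\nabla v\otimes\nabla v)$, hence is a symmetric gradient, and applying $\mathrm{curl}^T\mathrm{curl}$ gives $\det\nabla^2 v=\det\nabla^2 v_0$. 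Apart from two harmless slips --- your $v^h$ should be $V^h_3$ rather than $h^{-\alpha}V^h_3$, and the tangential first-fundamental-form discrepancy is $O(h^{1+\alpha})$ only in $L^1$ (or after the standard truncation of \cite{FJMhier}), which still suffices since $h^{1+\alpha}=o(h^{2\alpha})$ for $\alpha<1$ --- the argument matches the paper's.
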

The constraint (\ref{constr}) in the assertion (ii) follows by observing that
$h^{-\alpha} \mbox{sym}\nabla V^h$ converges in $L^2(\Omega)$ to
$F=\frac{1}{2} (\nabla v_0\otimes\nabla v_0 - \nabla v\otimes \nabla v)$, 
and hence $F=\mbox{sym}\nabla w$ for some $w:\Omega\rightarrow \mathbb{R}^2$.
Consequently $\mbox{curl}^T\mbox{curl} F=0$, which implies (\ref{constr}). 

\medskip

We conjecture that the linearized Kirchhoff-like energy (\ref{lin_new}) (\ref{constr}) is also 
the exact $\Gamma$-limit of the rescaled energies $h^{-(2\alpha+2)} J^h(u^h)$ when $\Omega$ is 
simply connected. Note that this result is established in \cite{FJMhier} 
for the degenerate case $v_0\equiv 0$ and hence our model 
can be considered as a generalization of the linearized Kirchhoff 
model discussed in \cite{FJMhier}. If $\Omega$ is not simply connected, one must replace 
(\ref{constr}) with a more general variant which states that $\nabla
v_0\otimes\nabla v_0 - \nabla v\otimes \nabla v$ is  a symmetric gradient.   

\medskip

We are able to prove our conjecture in the specific case when $\det\nabla^2 v_0$ is constant.
Its proof relies on Theorems \ref{th8.1} and \ref{thm-density} and it
will be given in Section \ref{last}.
Namely, we have:

\begin{theorem}\label{upbd}
Assume that $\Omega$ is open, bounded, connected and star-shaped with
respect to an interior ball $B\subset\Omega$. Assume that:
$$\det\nabla^2 v_0 \equiv c_0>0 \quad \mbox{ in } \Omega.$$
Fix $\alpha\in (0,1)$. Then, for every $v\in W^{2,2}(\Omega,\mathbb{R})$ with 
$\nabla^2 v = \nabla^2 v_0$, there exists a sequence of deformations $u^h\in
W^{1,2}((S_h)^h,\mathbb{R}^3)$ such that:
\begin{itemize}
\item[(i)] The rescaled sequence $y^h(x,t) = u^h(x+h^\alpha v_0(x)e_3 + ht\vec
  n^h(x))$ converges in $W^{1,2}(\Omega^1,\mathbb{R}^3)$ to $x$.
\item[(ii)] The scaled displacements $V^h$ as in (ii) Theorem \ref{cosik}
converge in $W^{1,2}$ to $(0,0,v)$.
\item[(iii)] $\lim_{h\to 0} h^{-(2\alpha+2)}J^h(u^h) = \mathcal{J}_{v_0}(v).$
\end{itemize}
\end{theorem}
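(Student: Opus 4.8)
\textbf{Proof plan for Theorem \ref{upbd}.} The plan is to construct the recovery sequence $u^h$ by the standard ansatz used in the derivation of Kirchhoff-type theories, but with the crucial input that the approximating smooth displacement can be \emph{matched} to an exact isometry of the shallow shell. First I would use the density result of Theorem \ref{thm-density}: since $v\in W^{2,2}(\Omega,\mathbb{R})$ satisfies $\det\nabla^2 v = c_0 = \det\nabla^2 v_0$ a.e., and $\Omega$ is star-shaped with respect to an interior ball, there is a sequence $v_j\in\mathcal{A}\cap C^\infty(\bar\Omega)$ with $v_j\to v$ strongly in $W^{2,2}$. Because both functionals in $\mathcal{J}_{v_0}$ are continuous with respect to this convergence ($\mathcal{Q}_2$ being a bounded quadratic form and $f\in L^2$), it suffices to produce, for each fixed smooth $v_j$, a sequence $u^{h}_j$ with $\lim_{h\to 0} h^{-(2\alpha+2)}J^h(u^h_j) = \mathcal{J}_{v_0}(v_j)$, together with the stated $W^{1,2}$ convergences, and then extract a diagonal sequence. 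So the whole problem reduces to the smooth case $v\in C^{2,\beta}(\bar\Omega)$ (the $C^\infty$ regularity from the density theorem certainly gives $C^{2,\beta}$, which is what Theorem \ref{th8.1} needs).

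For fixed smooth $v$ with $\det\nabla^2 v = \det\nabla^2 v_0$, I would invoke Theorem \ref{th8.1} to obtain the correctors $w_h\in C^{2,\beta}(\bar\Omega,\mathbb{R}^3)$ with $\sup_h\|w_h\|_{C^{2,\beta}}<\infty$ realizing the exact metric matching (\ref{metric}). Writing $\psi_h := \mathrm{id} + h^\alpha v e_3 + h^{2\alpha} w_{h^\alpha}$ (rescaling the matching parameter so that the depth of the deformed surface is again of order $h^\alpha$, as required by the geometry of $(S_h)^h$), the identity (\ref{metric}) says precisely that the first fundamental form of $\psi_h$ equals that of $\phi_h(x)=(x,h^\alpha v_0(x))$. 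Thus the map $\Psi_h := \psi_h\circ\phi_h^{-1}:S_h\to\mathbb{R}^3$ is, up to controllable errors, an isometry of the mid-surface $S_h$. The recovery deformation is then the Kirchhoff-Love extension of $\Psi_h$ with a first-order bending correction: on $\Omega^h$ I set
\begin{equation*}
u^h\circ\tilde\phi_h(x,x_3) = \Psi_h(\phi_h(x)) + x_3\, \vec N^h(x) + x_3\, h^{?}\, d^h(x),
\end{equation*}
where $\vec N^h$ is the unit normal to the deformed mid-surface $\Psi_h(S_h)$ and $d^h$ is the usual warping/Cosserat vector chosen to make the quadratic form $\mathcal Q_3$ reduce to $\mathcal Q_2$ pointwise (the argument of $\mathcal Q_2$ being the difference of second fundamental forms, which to leading order is $h^\alpha x_3(\nabla^2 v-\nabla^2 v_0)$ after the change of variables in (\ref{changev})). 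A Taylor expansion of $W$ around $SO(3)$, using the nondegeneracy and $C^2$ regularity in (\ref{cond}) and the fact that the stretching term vanishes exactly because of the metric matching (\ref{metric}), yields $\frac1h I^h(u^h) = \frac{h^{2\alpha+2}}{24}\int_\Omega\mathcal Q_2(\nabla^2 v-\nabla^2 v_0) + o(h^{2\alpha+2})$; the uniform $C^{2,\beta}$ bound on $w_h$ is exactly what guarantees the error terms are genuinely lower order uniformly. The force term contributes $-h^{2\alpha+2}\int_\Omega fv + o(h^{2\alpha+2})$ after expanding $f^h u^h$ using (\ref{forces}) and the normalization (\ref{force-properties}), and dividing by $h^{2\alpha+2}$ gives (iii); assertions (i) and (ii) follow by inspection of the ansatz since $\psi_h = \mathrm{id} + h^\alpha v e_3 + O(h^{2\alpha})$ in $C^1$.

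The main obstacle, and the reason Theorems \ref{th8.1} and \ref{thm-density} are needed as separate inputs, is that a generic $W^{2,2}$ solution $v$ of the Monge-Amp\`ere constraint is neither smooth enough to run the elastic construction directly nor automatically realizable as a genuine (higher-order) isometry of the shell: the second-order isometry condition (\ref{metric}) is a nonlinear PDE system for the corrector $w_h$, and solving it with uniform estimates — the content of Theorem \ref{th8.1} — relies essentially on the strict convexity $\det\nabla^2 v_0\ge c_0>0$, which makes the relevant linearized operator elliptic (it is, after the Darboux-type change of variables, a $\bar\partial$-type or nonlinear elliptic problem on a simply connected domain). Getting the two approximation steps to be compatible — i.e. ensuring that the smoothing in Theorem \ref{thm-density} preserves the constraint \emph{exactly} (so that Theorem \ref{th8.1} applies to each $v_j$) rather than only approximately — is what forces the hypothesis $\det\nabla^2 v_0\equiv c_0$ constant rather than merely positive, and is the delicate point where the star-shapedness and the interior-regularity argument behind Theorem \ref{thm-density} enter. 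Once both tools are in place, the remaining elasticity estimates are routine perturbations of the now-standard Friesecke--James--M\"uller recovery-sequence construction.
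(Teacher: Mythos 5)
Your proposal follows essentially the same route as the paper's proof: reduce to smooth $v$ via the density Theorem \ref{thm-density} plus a diagonal argument, match $v$ to an exact isometry of the shallow shell by applying Theorem \ref{th8.1} with the parameter $h^\alpha$, and then take a Kirchhoff--Love type ansatz with a warping field realizing the minimization defining $\mathcal{Q}_2$, concluding by Taylor expansion of $W$ around $SO(3)$. The only detail to correct is the warping term, whose exponent you left undetermined: it must enter quadratically in the transverse variable, as $\frac{x_3^2}{2}h^\alpha d^h(x)$, so that $\partial_3$ of the ansatz produces the strain contribution $x_3 h^\alpha\,\mathrm{sym}(d^h\otimes e_3)$ matching the linear-in-$x_3$ bending strain $x_3 h^\alpha(\nabla^2 v_0-\nabla^2 v)^*$; a term linear in $x_3$ as written would only shift the director and could not relax $\mathcal{Q}_3$ to $\mathcal{Q}_2$ pointwise.
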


\begin{remark}
We expect that the $\mathcal{C}^{2,\beta}$ scalar fields $v$ are dense in the set of the $W^{2,2}$ 
fields with any prescribed, strictly positive but not necessarily
constant $\det\nabla^2$ of $\mathcal{C}^{0, \beta}$ regularity. 
With such a result at hand, it would follow that for all convex shells of sufficient regularity, 
the linearized Kirchhoff-type energy 
(\ref{lin_new}) is the rigorous variational limit on weakly shallow shells, in the 
same spirit as the matching and  density of first order isometries on 
convex shells \cite{lemopa_convex} 
resulted in that the only small slope theory for an elastic convex shell is the linear theory. The latter problems, 
when posed for surfaces of arbitrary geometry, are more difficult. One could hope to prove similar results 
for strictly hyperbolic surfaces $S$. In the general case, however,
such problems reduce to the study of nonlinear PDEs of 
mixed types for which not so many suitable methods are at hand.
\end{remark} 

\section{Remarks on the matching property in Theorem \ref{th8.1}.}\label{match_explain}

\begin{remark}\label{matching}
In \cite{lepa}, the authors put forward a conjecture regarding
existence of infinitely many small slope shell theories (with no
prestrain) each valid for
a corresponding range of energy scalings. This conjecture is based on
formal asymptotic expansions and it is in accordance with the
previously obtained results for plates and shells. It predicts the
form of the 2-dimensional limit energy functional, and identifies the
space of admissible deformations as infinitesimal isometries of a
given integer order $N > 0$ determined by the magnitude of the elastic
energy. Hence, the influence of shell’s geometry on its qualitative
response to an external force, i.e. the shell’s rigidity, is reflected
in a hierarchy of functional spaces of isometries (and infinitesimal
isometries) arising as constraints of the derived theories.

In certain cases, a given $N$th order infinitesimal isometry can be
modified by higher order corrections to yield an infinitesimal
isometry of order $M > N$, a property to which we refer to by ``matching
property of infinitesimal isometries''. This feature, combined with
certain density results for spaces of isometries, cause the theories
corresponding to orders of infinitesimal isometries between $N$ and $M$,
to collapse all into one and the same theory. The examples of such
behavior are observed for plates \cite{FJMhier}, where any second order
infinitesimal isometry can be matched to an exact isometry ($M =
\infty$), for convex shells \cite{lemopa_convex}, where any first order infinitesimal
isometry satisfies the same property, and for non-flat developable  surfaces \cite{schmidt, holepa}
where first order isometries can be matched to higher order isometries (see also \cite{hornung-latest}). 
The effects of these geometric properties on the elasticity of thin films are drastic. A plate whose boundary is at least 
partially free possesses three types of small-slope theories: the linear theory, the
von K\'arm\'an theory and the linearized Kirchhoff theory, whereas
the only small slope theory for a convex shell with free boundary is the linear theory
\cite{lemopa_convex}: a convex shell transitions directly from the linear regime to
the fully nonlinear bending one if the applied forces are adequately
increased. In other words, while the von K\'arm\'an theory describes the
buckling of thin plates at a body force magnitude of order thickness-cubed, 
the equivalent, variationally correct theory for buckling of elliptic shells is the purely 
nonlinear bending theory which comes only into effect when the body forces reach to a magnitude 
of order thickness-squared. 
\end{remark}

\begin{remark}
Writing $w_h = w_{h, tan} + w_h^3e_3$ where $w_{h, tan}(x)
\in\mathbb{R}^2$ and $w_h^3\in\mathbb{R}$, equation (\ref{metric})
becomes:
\begin{equation}\label{metric2}
\begin{split}
\mathrm{Id} &+ h^2(2\mathrm{sym}\nabla w_{h,tan} + \nabla v\otimes\nabla v) +
2h^3\mathrm{sym}(\nabla v\otimes \nabla w_h^3) \\ & + h^4 ((\nabla
w_{h,tan})^T\nabla w_{h,tan} + \nabla w_h^3\otimes \nabla w_h^3)
= \mathrm{Id} + h^2\nabla v_0 \otimes \nabla v_0.
\end{split}
\end{equation}
Recall that (\ref{constraint}) is equivalent to
$\mbox{curl}^T\mbox{curl} (\nabla v\otimes\nabla v - \nabla v_0\otimes
\nabla v_0) = 0$, and hence to: $\nabla v\otimes\nabla v - \nabla v_0\otimes
\nabla v_0 = \mbox{sym}\nabla w$   for some
$w\in\mathcal{C}^{2,\beta}(\bar\Omega,\mathbb{R}^2)$ since $\Omega$ is simply connected.
Hence the constraint (\ref{constraint}) is necessary and sufficient
for matching the lowest order ($h^2$) terms in (\ref{metric2}). 

Our result states that actually it is possible to perturb $w$ by an
equibounded 3d displacement $w_h-w$ so that the full equality
(\ref{metric2}) holds. A natural way for proving this is by implicit
function theorem. Indeed, this is how we proceed, and the ellipticity
assumption $\det\nabla^2v_0>0$ is precisely a sufficient condition for
the invertibility of the implicit derivative $\mathcal{L}(p):
\mathcal{C}^{2,\beta}_0(\bar\Omega,\mathbb{R})\longrightarrow
\mathcal{C}^{0,\beta}(\bar\Omega,\mathbb{R})$, $\mathcal{L}(p) =
-\mbox{cof}\nabla^2 v : \nabla^2p$
where $p$ is the variation in $w_h^3$.
An extra argument for the uniform boundedness of
$w_{h,tan}$ in $\mathcal{C}^{2,\beta}$ concludes the proof.
\end{remark}

\begin{remark}
The condition (\ref{metric}) means that each deformation
$u^h:S_h\rightarrow \mathbb{R}^3$ of a surface
$S_h=\{x+hv_0(x)e_3; ~~ x\in\Omega\}$, given by $u^h(x+hv_0(x)e_3) = x
+hv(x)e_3 + h^2w_h(x)$ is an isometry of $S_h$. In other words, the
pull-back metrics from the Euclidean metric of $S_h$ and of $u^h(S_h)
= \{x+hv(x)e_3 + h^2w_h(x); ~~x\in \Omega\}$ coincide.  Hence 
Theorem \ref{th8.1} asserts that if two convex out-of-plane 
displacements of first order have the same determinants of Hessians,
then they can be matched by a family of equibounded higher order displacements
(the fields $w_h$) to be isometrically equivalent. When the parameter
$h$ is replaced by $h^\alpha$,   
this result will be used  in our context of shallow shells.
%proposed in this section. 
For other results  concerning matching of isometries see \cite[Theorem 7]{FJMhier},
\cite[Theorem 1.1]{lemopa_convex}, \cite[Theorem 3.1]{holepa}  
(which is comparable with \cite[Lemma 3.3]{schmidt} and the remark
which follows therein) and \cite[Theorem 4.1]{hornung-latest}.   
\end{remark}

\section{The matching property on convex shallow shells:
a proof of Theorem \ref{th8.1}.}\label{sec5}

By a direct calculation, (\ref{metric}) is equivalent to:
\begin{equation}\label{metric3}
\nabla(\mbox{id} + h^2w_{h,tan})^T \nabla(\mbox{id} + h^2w_{h,tan}) = 
\mbox{Id} + h^2\nabla v_0\otimes\nabla v_0 - h^2(\nabla v + \nabla
z_h) \otimes (\nabla v + \nabla z_h), 
\end{equation}
where $w_{h,tan}\in\mathcal{C}^{2,\beta}(\bar\Omega,\mathbb{R}^2)$ and
$z_h = hw_h^3\in\mathcal{C}^{2,\beta}(\bar\Omega,\mathbb{R})$ so that
$w_h = w_{h,tan} + w_h^3e_3$ is the required correction in
(\ref{metric}).

\medskip

{\bf 1.} We shall first find the formula for the Gaussian curvature of the 2d
metric in the right hand side of (\ref{metric3}):
\begin{equation}\label{8.4}
g_h(z_h) = \mbox{Id} + h^2\nabla v_0\otimes\nabla v_0 - h^2(\nabla v + \nabla
z_h) \otimes (\nabla v + \nabla z_h).
\end{equation}

\begin{lemma}\label{lem8.2}
Let $v_0, v\in \mathcal{C}^{2,\beta}(\bar\Omega,\mathbb{R})$ and consider
the $\mathcal{C}^{1,\beta}$ regular metrics on $\Omega$ of the type:
$$g = [g_{ij}]_{i,j=1,2}= \mathrm{Id} + h^2(\nabla v_0\otimes\nabla v_0
- \nabla v_1\otimes\nabla v_1).$$
Then, for any $h>0$ small, the Gaussian curvature $\kappa(g)$ of $g$
is $\mathcal{C}^{0,\beta}$ regular and it is given by the formula:
\begin{equation}\label{curv}
\kappa(g) = h^2\left[\frac{\mathrm{det}(\nabla^2 v_0 -
  [\Gamma_{ij}^k\partial_kv_0]_{ij})}{\big(1-h^2(g^{ij}\partial_iv_0 \partial_jv_0)\big)^2\mathrm{det}g}
-\frac{\big(1-h^2(g^{ij}\partial_iv_0 \partial_jv_0)\big)^2}{\big(1-h^2|\nabla
  v_1|^2\big)^2}\mathrm{det}\nabla^2 v_1\right],
\end{equation}
where the Christoffel symbols of $g$, the inverse of $g$, and its
determinant are:
\begin{equation}\label{chri}
\Gamma_{ij}^k = \frac{1}{2}g^{kl}\left(\partial_jg_{il} + \partial_i
  g_{jl} - \partial_j g_{ij}\right),
\end{equation}
\begin{equation}\label{inv}
g^{-1} = [g^{ij}] = \frac{1}{\mathrm{det}[g_{ij}]} \mathrm{cof}[g_{ij}],
\end{equation}
\begin{equation*}
\det g = 1-h^4 |(\nabla v_0)^\perp\cdot\nabla v_1|^2 + h^2
(|\nabla v_0|^2 - |\nabla v_1|^2).
\end{equation*}
\end{lemma}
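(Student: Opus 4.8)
The plan is to compute the Gaussian curvature of the metric $g = \mathrm{Id} + h^2(\nabla v_0\otimes\nabla v_0 - \nabla v_1\otimes\nabla v_1)$ directly from the Brioschi-type formula, but the efficient route is to recognize $g$ as an \emph{induced} metric and use the Gauss equation. First I would observe that $\mathrm{Id} + h^2\nabla v_1\otimes\nabla v_1$ is exactly the pull-back under $x\mapsto (x, hv_1(x))$ of the Euclidean metric on the graph of $hv_1$; more generally $g$ itself can be realized (for $h$ small) as the first fundamental form of an immersion of $\Omega$ into a flat space of higher dimension, namely $x \mapsto (x, hv_0(x))$ composed with a suitable construction, or — more directly — one writes $g_{ij} = \delta_{ij} + h^2\partial_iv_0\partial_jv_0 - h^2\partial_iv_1\partial_jv_1$ and treats $hv_0$ and $hv_1$ as the two \lq\lq height'' functions with opposite signatures. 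The cleanest bookkeeping device is: the metric induced on the graph of a scalar function $\phi$ over $\Omega$ has Gaussian curvature $\kappa = \dfrac{\det\nabla^2\phi}{(1+|\nabla\phi|^2)^2}$, and one needs the analogue when an additional negatively-weighted term is present.

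The key computational steps, carried out in the order below. (1) Record that $\det g = 1 + h^2(|\nabla v_0|^2 - |\nabla v_1|^2) - h^4|(\nabla v_0)^\perp\cdot\nabla v_1|^2$ by expanding the $2\times 2$ determinant; this is already the last displayed identity in the statement and is a one-line check. (2) Write down $g^{-1} = \frac{1}{\det g}\mathrm{cof}\,g$ and the Christoffel symbols $\Gamma_{ij}^k$ from (\ref{chri}); here $\partial_l g_{ij} = h^2(\partial_l\partial_i v_0\,\partial_j v_0 + \partial_i v_0\,\partial_l\partial_j v_0 - \partial_l\partial_i v_1\,\partial_j v_1 - \partial_i v_1\,\partial_l\partial_j v_1)$, so every Christoffel symbol carries an explicit factor $h^2$ and is a bilinear expression in $\nabla^2 v_0,\nabla v_0$ and $\nabla^2 v_1,\nabla v_1$ contracted with $g^{-1}$. (3) Insert these into the intrinsic curvature formula $\kappa(g) = \frac{1}{\det g}\big(\partial_1\Gamma_{12}^2 - \partial_2\Gamma_{11}^2 + \Gamma_{12}^1\Gamma_{11}^2 + \Gamma_{12}^2\Gamma_{12}^2 - \Gamma_{11}^1\Gamma_{12}^2 - \Gamma_{11}^2\Gamma_{22}^2\big)$ (or any equivalent contraction of the Riemann tensor), and simplify. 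The point of the two "correction factors'' $\big(1 - h^2 g^{ij}\partial_i v_0\partial_j v_0\big)^2$ and $\big(1 - h^2|\nabla v_1|^2\big)^2$ appearing in (\ref{curv}) is that they are precisely the squared normalization factors $(1+|\nabla\phi|^2)^{-1}$ rewritten in terms of the \emph{ambient} metric $g$ rather than the flat one; a short linear-algebra identity, e.g. $1 - h^2 g^{ij}\partial_i v_0\partial_j v_0 = \frac{\det(\mathrm{Id} + h^2\nabla v_1\otimes\nabla v_1)}{\det g} = \frac{1+h^2|\nabla v_1|^2}{\det g}$ (valid because $g + h^2\nabla v_1\otimes\nabla v_1 = \mathrm{Id} + h^2\nabla v_0\otimes\nabla v_0$ and Sherman–Morrison), converts between the two forms. (4) Identify $\det\big(\nabla^2 v_0 - [\Gamma_{ij}^k\partial_k v_0]_{ij}\big)$ in the first term: this is exactly the determinant of the covariant Hessian of $v_0$ with respect to $g$, which is natural because $hv_0$ plays the role of the height function whose graph carries (a metric conformal-by-a-scalar to) $g$.

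The main obstacle I anticipate is bookkeeping, not conceptual: getting all the $h^2$ and $h^4$ terms and the sign conventions (the negative contribution of $v_1$) to collapse into the two-term expression (\ref{curv}) requires careful cancellation, since the naive expansion produces many cross terms mixing $\nabla^2 v_0$ with $\nabla v_1$ and vice versa, and these must all be absorbed into the covariant-Hessian determinant $\det(\nabla^2 v_0 - [\Gamma_{ij}^k\partial_k v_0])$ and into $\det\nabla^2 v_1$ together with their normalization denominators. The slickest way to avoid much of this is to run the Gauss equation argument twice: first regard $g$ as the induced metric of the map $\Phi(x) = (x, hv_0(x))$ into $\mathbb{R}^3$ \emph{minus} a rank-one correction from $v_1$, equivalently note $g = (\nabla\Phi)^T\nabla\Phi - h^2\nabla v_1\otimes\nabla v_1$, and use that for a metric of the form $\text{(induced metric)} - (\text{rank one})$ the Gaussian curvature gains exactly the extra term $-\frac{h^2\det\nabla^2 v_1}{(\text{normalization})}$ by the co-area/Gauss-equation computation for the second height function; this yields the second term of (\ref{curv}) for free and leaves only the first term, which is the classical graph formula written covariantly. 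The $\mathcal{C}^{0,\beta}$ regularity of $\kappa(g)$ is immediate once the formula is established, since $v_0,v_1\in\mathcal{C}^{2,\beta}$ make $\nabla^2 v_0,\nabla^2 v_1\in\mathcal{C}^{0,\beta}$, the Christoffel symbols lie in $\mathcal{C}^{0,\beta}$, $\det g$ is bounded away from zero for $h$ small, and the right-hand side of (\ref{curv}) is a rational combination of these quantities with non-vanishing denominator.
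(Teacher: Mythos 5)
Your ``slicker'' route --- applying the curvature formula for a metric minus a rank-one term once to $\mathrm{Id}-h^2\nabla v_1\otimes\nabla v_1$ and once to $g-h^2\nabla v_0\otimes\nabla v_0$, and equating because these two metrics coincide --- is essentially the paper's own argument (the needed identity is Lemma 2.1.2 of Han--Hong, quoted there), and the direct Brioschi computation you list first would also work in the smooth case, if laboriously. One small slip: since $g-h^2\nabla v_0\otimes\nabla v_0=\mathrm{Id}-h^2\nabla v_1\otimes\nabla v_1$, the matrix determinant lemma gives $1-h^2 g^{ij}\partial_iv_0\partial_jv_0=\big(1-h^2|\nabla v_1|^2\big)/\det g$, not $\big(1+h^2|\nabla v_1|^2\big)/\det g$ as you wrote; also the rank-one subtraction does not merely ``add'' the $\det\nabla^2 v_1$ term, it multiplies $\kappa(g)$ by the normalization prefactor as well, which is exactly why $\big(1-h^2 g^{ij}\partial_iv_0\partial_jv_0\big)^2$ ends up in the numerator of the second term of (\ref{curv}).

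The genuine gap is the regularity. For $v_0,v_1\in\mathcal{C}^{2,\beta}$ the metric $g$ is only $\mathcal{C}^{1,\beta}$, so the Christoffel symbols (\ref{chri}) are merely $\mathcal{C}^{0,\beta}$, and the classical curvature expression you propose to insert them into, $\partial_1\Gamma^2_{12}-\partial_2\Gamma^2_{11}+\dots$, involves third derivatives of $v_0,v_1$ that do not exist pointwise; the Gauss-equation variant likewise presupposes a $\mathcal{C}^2$ immersion. Your proposal never specifies what $\kappa(g)$ even means at this regularity, and your closing sentence (``the $\mathcal{C}^{0,\beta}$ regularity is immediate once the formula is established'') is circular, because establishing the formula is precisely where smoothness is used. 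The paper closes this gap by first proving (\ref{curv}) for smooth $v_0,v_1$ and then approximating the $\mathcal{C}^{2,\beta}$ data by smooth sequences $v_0^n,v_1^n$: the right-hand sides, which contain only second derivatives, converge in $\mathcal{C}^{0,\beta}$, while the curvatures $\kappa_n$ converge in $\mathcal{D}'(\Omega)$ to the distributionally defined curvature $\kappa(g)=R_{1212}/\det g$, which identifies the limit. Some such approximation (or other distributional) argument must be added for the lemma as stated; with it, your outline becomes a complete proof along the same lines as the paper's.
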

\begin{proof}
Assume first that $v_0$ and $v_1$ are in fact smooth.
By Lemma 2.1.2 in \cite{hanhong}, we have:
\begin{equation*}
\begin{split}
\kappa\big(\mbox{Id} - h^2\nabla v_1\otimes \nabla v_1\big) &=
-h^2\frac{\mbox{det}\nabla^2 v_1}{\big(1-h^2|\nabla v_1|^2\big)^2}\\
\kappa\big(g - h^2\nabla v_0\otimes\nabla v_0\big) & =
\frac{1}{\big(1-h^2(g^{ij}\partial_iv_0 \partial_jv_0)\big)^2}
\left[ \kappa(g) - \frac{h^2 \mathrm{det}(\nabla^2 v_0 -
  [\Gamma_{ij}^k\partial_kv_0]_{ij})
}{\big(1-h^2(g^{ij}\partial_iv_0 \partial_jv_0)\big)^2\mathrm{det}g} \right].
\end{split}
\end{equation*}
Since the two metrics above are equal, the formula (\ref{curv}) follows
directly. The formula for $\mbox{det}g$ is obtained by a direct
calculation, via $\mbox{det} (A+B) = \mbox{det} A +\mbox{cof} A : B +
\mbox{det} B$, valid for $2\times 2$ matrices $A, B$.

\medskip

In the general case when $v_0, v_1$ are only $\mathcal{C}^{2,\beta}$
regular, one may approximate them by smooth sequences $v_0^n, v_1^n$.
Then, each $\kappa_n = \kappa\big(\mbox{Id} + h^2(\nabla v_0^n\otimes
v_0^n - \nabla v_1^n\otimes v_1^n)\big)$ is given by the formula in
(\ref{curv}), and the sequence $\kappa_n$ converges in
$\mathcal{C}^{0,\beta}$ to the right hand side in (\ref{curv}). On
the other hand, $\kappa_n$ converges in $\mathcal{D}'(\Omega)$ to
$\kappa(g)$, which follows from the definition of Gauss curvature
$\kappa = {R_{1212}}/{\mbox{det} g}$. Hence the lemma is proven.
\end{proof}

\medskip

{\bf 2.} Applying  Lemma \ref{lem8.2} to $v_1 = v+ z_h$, we now see
that for small $h$, the Gauss curvature of metric $g_h(z_h)$ vanishes:
\begin{equation}\label{curv0}
\kappa(g_h(z_h)) = 0
\end{equation}
if and only if:
\begin{equation}\label{impl}
\Phi(h, z_h) = 0,
\end{equation}
where:
\begin{equation*}
\begin{split}
\Phi(h,z) = &\big(1-h^2|\nabla v + \nabla z|^2\big)^2
\mbox{det}\big(\nabla^2 v_0 - [\Gamma_{ij}^k\partial_k v_0]_{ij}\big)\\
&\qquad\qquad 
- \big(1-h^2(g^{ij}\partial_iv_0 \partial_jv_0)\big)^4 d(h,z) \mbox{det}
(\nabla^2v + \nabla^2z).
\end{split}
\end{equation*}
Here: 
$$d(h,z) = 1- h^4|(\nabla v_0)^\perp \cdot \nabla (v+z)|^2 + h^2
(|\nabla v_0|^2 - |\nabla v+\nabla z|^2)$$ 
and $\Gamma_{ij}^k$
and $g^{ij}$ are given by (\ref{chri}) and (\ref{inv}) for the metric  $g =
\mbox{Id} + h^2\nabla v_0 \otimes \nabla v_0 - h^2 (\nabla v + \nabla
z)\otimes (\nabla v + \nabla z)$.
We shall consider:
$$\Phi:(-\epsilon,\epsilon) \times
\mathcal{C}^{2,\beta}_0(\bar\Omega,\mathbb{R}) \longrightarrow 
\mathcal{C}^{0,\beta}(\bar\Omega,\mathbb{R}) $$
and seek for solutions $z_h \in
\mathcal{C}^{2,\beta}_0(\bar\Omega,\mathbb{R})$  of (\ref{impl}) with
zero boundary data.  It is
elementary to check that $\Phi$ is continuously Frechet differentiable
at $(0,0)$ and that 
$$\Phi(0,0) = \mbox{det} \nabla^2v_0 - \mbox{det} \nabla^2v = 0.$$
Moreover, the partial Frechet derivative $\mathcal{L} = \partial
\Phi/\partial z (0,0) : \mathcal{C}_0^{2,\beta}(\bar\Omega,\mathbb{R})
\longrightarrow \mathcal{C}^{0,\beta}(\bar\Omega,\mathbb{R})$ is a linear
continuous operator of the form:
\begin{equation*}
\begin{split}
\forall z\in\mathcal{C}_0^{2,\beta} \qquad
\mathcal{L}(z) &= \lim_{\epsilon\to 0} \frac{1}{\epsilon}
\Phi(0,\epsilon z) = \lim \frac{1}{\epsilon} \big(\det\nabla^2v_0 -
\det (\nabla^2v+\epsilon\nabla^2z)\big) \\
&= \lim \frac{1}{\epsilon}\big(
-\epsilon^2\det\nabla^2 z - \epsilon\mbox{cof} \nabla^2 v : \nabla^2
z\big) = -\mbox{cof} \nabla^2v:\nabla^2 z.
\end{split}
\end{equation*}
Clearly, $\mathcal{L}$ is invertible to a continuous linear operator,
because of the uniform ellipticity of the matrix field $\nabla^2v$
which follows from the convexity assumption of $\det\nabla^2v =
\det\nabla^2v_0$ being strictly positive. Thus, invoking the implicit
function theorem we obtain the solution operator:
$$\mathcal{Z}:(-\epsilon, \epsilon)\longrightarrow
\mathcal{C}_0^{2,\beta}(\bar\Omega,\mathbb{R})$$  
such that $z_h = \mathcal{Z}(h)$ satisfies (\ref{impl}).
Moreover, $\mathcal{Z}$ is differentiable at $h=0$ and:
$$\mathcal{Z}'(0) = \mathcal{L}^{-1}\circ \left(\frac{\partial
    \Phi}{\partial h} (0,0)\right) = 0,$$
because:
\begin{equation*}
\frac{\partial \Phi}{\partial h} (0,0) = \big(\mbox{cof}
\nabla^2v_0\big) : \left[(\frac{\partial}{\partial h}
  \Gamma_{ij}^k)\partial_kv_0\right]_{ij}
+ \frac{\partial}{\partial h}\det [\Gamma_{ij}^k\partial_kv_0]_{ij}
- \left(\frac{\partial}{\partial h} d(0,0)\right) \det \nabla^2 v =0.
\end{equation*}
Consequently:
\begin{equation}\label{wh3}
\|w_h^3\|_{\mathcal{C}^{2,\beta}} = \frac{1}{h}
\|z_h\|_{\mathcal{C}^{2,\beta}} \to 0 \quad \mbox{ as } h\to 0.
\end{equation}

\medskip

{\bf 3.} In conclusion, we have so far obtained a uniformly bounded sequence of
$\mathcal{C}_0^{2,\beta}$ out-of-plane displacements $w_h^3 = z_h/h$
such that the Gauss curvature (\ref{curv0}) of the metric $g_h(z_h)$
in the right hand side of (\ref{metric3}) is $0$.
By the result in \cite{mardare} it follows that for each small $h$
there exists exactly one (up to fixed rotations) orientation preserving isometric immersion
$\phi_h\in\mathcal{C}^2(\bar\Omega, \mathbb{R}^2)$ of $g_h(z_h)$:
\begin{equation}\label{isomh}
\nabla\phi_h^T\nabla\phi_h = g_h(z_h)  \quad \mbox{and} \quad \det\nabla\phi_h>0.
\end{equation}
What remains to be proven is that, in fact, $\phi_h = \mbox{id}
+h^2w_{h,tan}$ with some $w_{h,tan}$ uniformly bounded in
$\mathcal{C}^{2,\beta}(\bar\Omega,\mathbb{R}^2)$. 

It is a well known calculation (see \cite{ciar, mardare})
that (\ref{isomh}) implies (is actually equivalent to):
\begin{equation}\label{pdes}
\nabla^2\phi_h - [\Gamma_{ij}^k \partial_k\phi_h]_{ij} = 0,
\end{equation}
where $\Gamma_{ij}^k$ are the Christoffel symbols (\ref{chri}) of the
metric $g=g_h(z_h)$ in (\ref{8.4}). By (\ref{isomh})
$\|\nabla\phi_h\|_{L^\infty}\leq C$, and by (\ref{pdes})
$\|\nabla^2\phi_h\|_{L^\infty}\leq C$, hence
$\|\phi_h\|_{\mathcal{C}^{2,\beta}}\leq C$. But $\Gamma_{ij}^k$ are
uniformly bounded (with respect to small h) in
$\mathcal{C}^{0,\beta}$ so by (\ref{pdes})
$\|\nabla^2\phi_h\|_{\mathcal{C}^{0,\beta}}\leq C$ and thus:
$$\|\phi_h\|_{\mathcal{C}^{2,\beta}(\bar\Omega,\mathbb{R}^2)}\leq C.$$
Note now that $\|\Gamma_{ij}^k\|_{\mathcal{C}^{0,\beta}}\leq Ch^2$ in
view of the particular structure of the metrics $g_h(z_h)$. Hence, by
(\ref{pdes}):
\begin{equation}\label{sec}
\|\nabla^2\phi_h\|_{\mathcal{C}^{0,\beta}}\leq Ch^2.
\end{equation}
Therefore, for some $A_h\in\mathbb{R}^{2\times 2}$ we have:
\begin{equation}\label{cc}
\|\nabla\phi_h - A_h\|_{\mathcal{C}^{1,\beta}}\leq Ch^2.
\end{equation}
We now prove that the matrix $A_h$ in the inequality above can be
chosen as a rotation and hence, without loss of
generality,  $A_h = \mbox{Id}$.
For each $x\in\Omega$ there holds:
\begin{equation}\label{ccc}
\mbox{dist}(A_h, SO(3)) \leq |A_h - \nabla\phi_h(x)| +
\mbox{dist}(\nabla\phi_h(x), SO(3)).
\end{equation}
To evaluate the last term above, 
write: $\sqrt{\nabla\phi_h^T(x)\nabla\phi_h(x)} = QDQ^T$ for some
$Q\in SO(3)$ and $D=\mbox{diag}(\lambda_1,\lambda_2)$ with
$\lambda_1,\lambda_2>0$. Since $\det\nabla\phi_h>0$, it follows by
polar decomposition theorem that:
\begin{equation*}
\begin{split}
\mbox{dist}(\nabla\phi_h(x), SO(3)) & =
|\sqrt{\nabla\phi_h^T(x)\nabla\phi_h(x)} - \mbox{Id}| \leq
C|D-\mbox{Id}| \\ & = C\max_{i}\{|\lambda_i - 1|\} \leq C\max_i
\{|\lambda_i^2 - 1|\} \leq C|D^2 - \mbox{Id}| \\ & = C |Q^T
\nabla\phi_h^T(x)\nabla\phi_h(x) Q - \mbox{Id}| \leq
C|\nabla\phi_h^T\nabla\phi_h(x) - \mbox{Id}| \leq Ch^2.
\end{split}
\end{equation*}
By the above and (\ref{ccc}), (\ref{cc}) we see that $\mbox{dist}(A_h,
SO(3))\leq Ch^2$. Hence, without loss of generality, $\|\nabla\phi_h -
\mbox{Id}\|_{\mathcal{C}^{1,\beta}}\leq Ch^2$ and:
$$ \|\phi_h - \mbox{id}\|_{\mathcal{C}^{2,\beta}} \leq Ch^2.$$
Consequently, $\phi_h = \mbox{id} + h^2w_{h,tan}$ with $\|w_{h,
  tan}\|_{\mathcal{C}^{2,\beta}} \leq C$. This concludes the proof
of Theorem \ref{th8.1}, in view of (\ref{metric3}) which is equivalent to (\ref{metric}).
\endproof

\begin{remark}
The above proof is somewhat similar to \cite[Theorem 4.1.1]{hanhong}. 
In analogy, note the similarity between the proof of the matching
property in \cite{lemopa_convex}  and the Weyl problem by
Nirenberg in \cite{Ni}.  
\end{remark}

\section{Density and regularity for elliptic 2d Monge-Amp\`ere
  equation: a proof of Theorem \ref{thm-density}.} 

In this and the following sections, $\Omega \subset \R^2$ is a domain, i.e. an open, bounded and
connected set. 
The main step towards proving Theorem \ref{thm-density} will be the
following result in which we combine some key observations by
\v{S}ver\'ak from his unpublished manuscript 
\cite{Sve}, with a theorem due to Iwaniec and \v{S}ver\'ak in
\cite{IwanSve} (Theorem \ref{thm-IS})
regarding  deformations with integrable dilatation in dimension $2$.  

\begin{theorem}\label{thm-convex}
Let $u\in W^{2,2}(\Omega)$ be such that:
\begin{equation}\label{hesf}
\det \nabla^2 u = f ~~~~ \mbox{ in } \Omega, \quad \mbox{ where }
f:\Omega\rightarrow \mathbb{R}, \quad f(x) \ge c_0>0 \quad \forall a.e.~
x\in \Omega. 
\end{equation}
Then $u\in\mathcal{C}^1(\Omega)$ and, modulo a global sign change, $u$ is locally convex in $\Omega$. 
\end{theorem}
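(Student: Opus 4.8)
The plan is to deduce both the $\mathcal{C}^1$ regularity and the local convexity from the theory of planar maps with integrable distortion, applied to the gradient map $\nabla u$. First I would observe that since $u\in W^{2,2}(\Omega)$, the map $w:=\nabla u:\Omega\to\mathbb{R}^2$ lies in $W^{1,2}(\Omega,\mathbb{R}^2)$ and has Jacobian $\det\nabla w=\det\nabla^2 u=f\ge c_0>0$ a.e. The key point is a distortion estimate: for a.e.\ $x$, $|\nabla w(x)|^2=|\nabla^2 u(x)|^2\le C(|\nabla^2 u(x)|^2$ controlled by $\det\nabla^2 u)$—more precisely, because $\nabla^2 u$ is symmetric with positive determinant $f$, its two eigenvalues $\lambda_1,\lambda_2$ are of the same sign and $\lambda_1\lambda_2=f\ge c_0$, so $|\nabla^2 u|^2=\lambda_1^2+\lambda_2^2$ while $K(x):=|\nabla w(x)|^2/\det\nabla w(x)=(\lambda_1^2+\lambda_2^2)/f$, whence $K\in L^1(\Omega)$ with $\int_\Omega K=\int_\Omega |\nabla^2 u|^2/f\le c_0^{-1}\|\nabla^2 u\|_{L^2}^2<\infty$. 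Thus $w=\nabla u$ is a $W^{1,2}$ map with a.e.\ positive Jacobian and integrable dilatation, precisely the setting of the Iwaniec--\v{S}ver\'ak theorem (Theorem \ref{thm-IS}).

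Next I would invoke that theorem to conclude that $w=\nabla u$ is continuous (indeed a homeomorphism onto its image, or at least open and discrete), which immediately gives $u\in\mathcal{C}^1(\Omega)$. For the convexity statement, the idea—following \v{S}ver\'ak's manuscript \cite{Sve}—is that $\nabla u$, being now a continuous map with a.e.\ positive Jacobian, is a \emph{sense-preserving} map, and one shows it is locally invertible with the local inverse enjoying enough regularity that $u$ is, locally, the Legendre transform of a convex function. Concretely, I expect the argument to run: after possibly replacing $u$ by $-u$ (the global sign change), one shows that on a small ball $\nabla u$ is injective; then the classical fact that a $\mathcal{C}^1$ function whose gradient is (locally) injective and sense-preserving is (locally) convex or concave—this can be seen because the image $\nabla u(B)$ has a well-defined inverse $(\nabla u)^{-1}$, and the relation $\nabla u^*\circ\nabla u=\mathrm{id}$ identifies $u$ (up to an affine function) with the conjugate of the convex function $u^*$. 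The uniform positivity $f\ge c_0$ is what prevents the degeneracies that would allow $\nabla u$ to be merely monotone without being a gradient of a convex function.

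The main obstacle I anticipate is establishing the \emph{local injectivity} of $\nabla u$ and ruling out that it could be locally two-to-one or behave like a branched covering—the Iwaniec--\v{S}ver\'ak theorem gives openness and discreteness of the map but not injectivity outright, and a priori a continuous sense-preserving map can wind. The resolution should use that $\nabla u$ is a \emph{gradient}: its being curl-free means it cannot have a nontrivial local degree (a genuine branch point of a gradient map would force the potential $u$ to be multivalued, a contradiction), so the local topological degree of $\nabla u$ is everywhere $1$, forcing local homeomorphism. Packaging this correctly—that a continuous, open, discrete, sense-preserving \emph{gradient} map is locally injective, hence locally the gradient of a (strictly, since $f>0$) convex function—is the technical heart, and it is exactly the content attributed to \v{S}ver\'ak's observations; I would present it in full detail as the excerpt promises. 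Once local injectivity and the convex-conjugate identification are in hand, local convexity of $u$ (after the sign fix, chosen consistently using connectedness of $\Omega$) follows, completing the proof.
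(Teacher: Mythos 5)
Your reduction to the Iwaniec--\v{S}ver\'ak theorem is exactly right and is precisely how the paper proceeds: since $\nabla^2u$ is symmetric with $\det\nabla^2u=f\ge c_0>0$, the dilatation of $w=\nabla u$ satisfies $K=|\nabla^2u|^2/f\le c_0^{-1}|\nabla^2u|^2\in L^1(\Omega)$, so Theorem \ref{thm-IS} applies; continuity of $\nabla u$ (hence $u\in\mathcal{C}^1$) is also fine, whether obtained from the factorization $\varphi\circ h^{-1}$ or, as in the paper, from the Vodopyanov--Goldstein modulus of continuity.

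The convexity part, however, has a genuine gap. Your plan rests on the claim that, because $\nabla u$ is curl-free, its local degree is everywhere $1$, so it is a local homeomorphism and in particular has no branch points; the justification offered (a branch point would make the potential multivalued) is not an argument -- the potential is recovered by integrating a closed $1$-form and is single-valued on simply connected neighborhoods no matter how $\nabla u$ winds, and gradient maps genuinely can branch: $u=\mathrm{Re}\,z^3$ has $\nabla u=3\overline{z^2}$, with local degree $-2$ at the origin. What rules this out here is the sign condition on $\det\nabla^2u$, and making that rigorous in the $W^{2,2}$ setting is exactly the hard analytic content, not a soft topological fact. Indeed the paper never proves everywhere-local injectivity: Theorem \ref{thm-IS} only yields that $\nabla u$ is ``connectedly locally one-to-one,'' i.e.\ injective off a closed null set $S=h((\varphi')^{-1}\{0\})$ with $\Omega\setminus S$ connected, and the proof of Theorem \ref{thm-convex2} must then (i) produce a supporting hyperplane at one good Lebesgue point of $\nabla^2u$ via a blow-up/equicontinuity argument (Lemma \ref{controlofu} together with Lemma \ref{convexiseasy}), (ii) propagate strict convexity through $\Omega\setminus S$ along chains of balls using Ball's characterization of strict convexity, and (iii) cross the singular set $S$ by restricting to almost every line and invoking a one-dimensional $W^{2,1}$ convexity lemma, then passing to all lines by continuity of $\nabla u$. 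Your alternative closing step is also too quick: it is not a ``classical fact'' that a $\mathcal{C}^1$ function with locally injective, sense-preserving gradient is locally convex or concave -- Ball's theorem requires, in addition to local injectivity of $\nabla u$, a supporting hyperplane at some point, and the Legendre-transform identification you sketch presupposes the convexity you are trying to prove. So the structure of your first step matches the paper, but the heart of the convexity argument (no-branching/injectivity off a small set, the supporting-hyperplane construction at a Lebesgue point, and the mechanism for crossing the singular set) is missing or incorrectly justified as proposed.
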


\begin{example}\label{example} 
Let $B_1$ be the unit disk in $\R^2$ and let $u\in\mathcal{C}^1(B_1)$ be given by: 
\begin{equation*} 
u(x,y) = \left \{ \begin{array}{ll}
x^2 e^{y^2/2} & \mbox{if} \,\, x\ge 0 \\
-x^2 e^{y^2/2} & \mbox{otherwise.}
\end{array} \right .
\end{equation*} 
Note that $u(0,y) = 0$ and  $\nabla u (0,y) =0$ for all $y\in (-1,1)$. 
Indeed, we have $u_x = \pm 2x e^{y^2/2}$, $u_y = \pm y x^2 e^{y^2/2}$, 
$u_{xx}= \pm 2 e^{y^2/2}$,  $u_{xy} = u_{yx} = \pm 2xy e^{y^2/2}$, $u_{yy} = \pm (x^2 e^{y^2/2} + y^2 x^2
e^{y^2/2})$ and  $\Delta u= \pm e^{y^2/2} ( 2 + x^2  + y^2 x^2)$, respectively for $x>0$ and $x<0$. 

As a consequence $u\in W^{2,\infty}(B_1)$, $u$ is strictly convex in $\{(x,y)\in
B_1; \, x>0\}$ and 
strictly concave in $\{(x,y)\in B_1; \, x<0\}$. On the other hand 
$\det \nabla ^2 u=  2x^2 e^{y^2} ( 1- y^2) \in \mathcal{C}^\infty(B_1)$ and it is positive 
if $x\neq 0$ and  $y^2 < 1$. We right away note that
$u\not\in\mathcal{C}^2(B_1)$ although it solves the Monge-Amp\`ere equation with
smooth non-negative right hand side, a.e. in its domain.

Finally, our example shows that the assumption of strict
positivity in Theorem \ref{thm-convex} (and also \ref{thm-IS} and
Theorem \ref{thm-regularity} below) cannot
be relaxed to $\det \nabla^2 u>0$ a.e., even assuming a better $W^{2,\infty}$
regularity for $u$.   
\end{example} 

As a consequence of Theorem \ref{thm-convex} and of the monotonicity
property by Vodopyanov and Goldstein which we quote in Theorem
\ref{modulus}, we obtain:
 
\begin{theorem}\label{thm-regularity}
Let $f\in \mathcal{C}^{k,\beta}(\Omega)$ be a positive function. 
Then any $W^{2,2}(\Omega)$ solution of $\det \nabla^2 u=f$, is
$\mathcal{C}^{k+2, \beta}$ regular, locally in $\Omega$.   
\end{theorem}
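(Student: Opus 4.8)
The plan is to bootstrap from the $W^{2,2}$ regularity to $C^{k+2,\beta}$ using elliptic regularity theory, once we know that $u$ is (locally) a convex solution of a uniformly elliptic Monge-Amp\`ere equation. First I would apply Theorem \ref{thm-convex}: since $f\in C^{k,\beta}(\Omega)$ is positive, in particular $f$ is continuous and hence locally bounded below by a positive constant on compactly contained subdomains, so $u\in C^1(\Omega)$ and, after a global sign change, $u$ is locally convex. Fix an arbitrary point $x_0\in\Omega$ and a small convex neighborhood $U\Subset\Omega$ on which $u$ is convex and $f\ge c_0>0$. On $U$, $u$ is a convex Alexandrov (and, being $W^{2,2}$, also a.e.-classical) solution of $\det\nabla^2 u=f$.

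The next step is to upgrade from a convex $W^{2,2}$ solution to a $C^{1,\gamma}$ (indeed $C^{2,\gamma}$) solution using the Caffarelli regularity theory for the Monge-Amp\`ere equation. The relevant tool is the monotonicity/modulus-of-continuity property of the gradient of a convex function quoted in Theorem \ref{modulus} (Vodopyanov--Goldstein): combined with the fact that $\nabla u$ is a monotone map whose Jacobian $\det\nabla^2 u=f$ is bounded above and below, one deduces that $\nabla u$ is a homeomorphism with controlled distortion, so $u$ has no line segments on its graph and is strictly convex in $U$. Strict convexity of $u$ together with $f\in C^{0,\beta}$ then puts us exactly in the setting of Caffarelli's interior $C^{2,\alpha}$ estimates for the Monge-Amp\`ere equation: any strictly convex Alexandrov solution with $C^{0,\beta}$ right-hand side bounded between two positive constants is $C^{2,\alpha}$ locally. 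Hence $u\in C^{2,\beta}_{\mathrm{loc}}(U)$, and since $x_0$ was arbitrary, $u\in C^{2,\beta}_{\mathrm{loc}}(\Omega)$.

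Finally, once $u\in C^{2,\beta}_{\mathrm{loc}}$ and is locally uniformly convex (the Hessian is now a continuous, positive-definite matrix field with $\det\nabla^2 u=f\ge c_0$, and its trace is locally bounded, so its eigenvalues are pinched between two positive constants on compact subsets), the equation $\det\nabla^2 u=f$ becomes a genuinely \emph{uniformly elliptic}, fully nonlinear second-order equation for $u$ with $C^{k,\beta}$ coefficients. Standard Schauder-type bootstrapping for uniformly elliptic equations then applies: differentiating the equation, each derivative $\partial_l u$ solves a linear uniformly elliptic equation in non-divergence form with $C^{k-1,\beta}$ coefficients and $C^{k-1,\beta}$ right-hand side, so $\partial_l u\in C^{k+1,\beta}_{\mathrm{loc}}$, giving $u\in C^{k+2,\beta}_{\mathrm{loc}}(\Omega)$.

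The main obstacle is the passage to strict convexity: a priori a convex $W^{2,2}$ solution could have a flat piece on its graph, on which the Monge-Amp\`ere measure of the gradient image would be supported on a lower-dimensional set, contradicting $\det\nabla^2 u=f\,dx$ with $f\ge c_0>0$ — but making this rigorous in the Alexandrov/Sobolev framework (rather than assuming strict convexity outright) is exactly where Theorem \ref{modulus} and the distortion bound on $\nabla u$ do the essential work, and it is the step that must be argued carefully before Caffarelli's estimates can be invoked.
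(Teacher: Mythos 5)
Your overall route (local convexity from Theorem \ref{thm-convex}, then strict convexity, then Caffarelli's interior estimates, then Schauder bootstrap to $\mathcal{C}^{k+2,\beta}$) is the same as the paper's, but there is a genuine gap at the point where you simply declare that on $U$ the function $u$ is ``a convex Alexandrov (and, being $W^{2,2}$, also a.e.-classical) solution''. Knowing that the pointwise Hessian satisfies $\det\nabla^2u=f$ a.e.\ does \emph{not} by itself identify the Monge-Amp\`ere measure $\mu_u$ with $f\,dx$: for a (locally) convex function, $\mu_u$ splits into a regular part, which by Lemma 2.3 of \cite{TW} equals $(\det\nabla^2u)\,dx=f\,dx$, plus a possible singular part, and Caffarelli's theory (as well as the Aleksandrov--Heinz strict convexity argument you need before it) is a statement about Aleksandrov solutions, i.e.\ it requires $\mu_u=f\,dx$ with no singular part. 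Since $u\in\mathcal{C}^1$ gives $\mu_u(\omega)=|\nabla u(\omega)|$ for Borel $\omega$, ruling out the singular part amounts to proving Luzin's condition (N) for $\nabla u$, which is \emph{not} automatic for $W^{1,2}$ maps in the plane. This is exactly half of the paper's proof: one applies Theorem \ref{modulus} to $v=\nabla u$ (legitimate since $\det\nabla v=f>0$ a.e.) to get the monotonicity property (\ref{mon}), and then the Mal\'y--Martio theorem \cite{Maly} for monotone $W^{1,n}$ maps to get condition (N). Your closing remark about ``Theorem \ref{modulus} and the distortion bound'' gestures in this direction, but the decisive ingredient --- condition (N) and hence absolute continuity of $\mu_u$ --- is never actually supplied, and without it the invocation of Caffarelli is unjustified.

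A second, smaller, problem is your passage to strict convexity via ``$\nabla u$ is a homeomorphism with controlled distortion, so $u$ has no line segments on its graph'': injectivity of the gradient of a convex function is equivalent to strict convexity, so this is circular as stated, and neither property follows from the pointwise bound $\det\nabla^2u\ge c_0$ alone, since a flat piece of the graph is a null set invisible to the a.e.\ equation. The correct (and the paper's) order of reasoning is: first $\mu_u=f\,dx$ as measures, then strict convexity near any point by the Aleksandrov--Heinz result in dimension two (Remark 3.2 in \cite{TW}), then localization to a section $\Omega_0=\{u\le u(x_0)+\delta\}$ with affine boundary data, after which Caffarelli's interior $\mathcal{C}^{2,\beta}$ estimate (\cite{Caffarelli}, or Theorem 5.4 in \cite{huang}) and the bootstrap to $\mathcal{C}^{k+2,\beta}$ as in \cite{Cafcab} proceed exactly as you describe.
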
 
\begin{proof}   
{\bf 1.} We first note that $u$ is a generalized Aleksandrov solution
to (\ref{hesf}) . Since $u$ is locally convex, it is twice
differentiable in the classical sense a.e. in $\Omega$, and its gradient agrees
with $f$. By Lemma 2.3 in \cite{TW}, the regular part of the
Monge-Amp\`ere measure $\mu_u$ equals $(\det\nabla^2 u) dx = f dx.$
It suffices now to prove that there is no singular part of $\mu_u$,
i.e. that $\mu_u$ is absolutely continuous with respect to the
Lebesgue measure $dx$.

Call $v=\nabla u$. By Theorem \ref{thm-convex} we have
$u\in\mathcal{C}^1(\Omega)$ and hence:
$$\mu_u(\omega) = |v(\omega)| \qquad \mbox{ for every Borel set } 
\omega\subset\Omega.$$ 
We thus need to show that $v$ satisfies Luzin's
condition (N): 
$$|v(\omega)|=0  \qquad \forall \omega\subset\Omega; \quad 
|\omega|=0.$$ 
The above claim follows directly from Theorem A in
\cite{Maly}, in view of $v\in W^{1,2}(\Omega,\mathbb{R}^2)$ and the
monotonicity property (\ref{mon}) of $v$ due to Vodopyanov and Goldstein.

\medskip

{\bf 2.} Since $f\in\mathcal{C}^{0,\alpha}(\bar\Omega)$, then Theorem
5.4 in \cite{huang} implies that $u$ is locally
$\mathcal{C}^{2,\alpha}$.  We note that this statement is the well-known result due to
Caffarelli \cite{Caffarelli}.  Indeed, fix $x_0\in\Omega$. By Remark 3.2
in \cite{TW} which gives an elementary proof of a result by
Aleksandrov and Heinz, the displacement $u$ as above must be
strictly convex in some $B(x_0,\epsilon)$.
By adding an affine function to $u$, we may without loss of generality
assume that $u=0$ on the boundary of the convex set: 
$$\Omega_0 = \{x\in\Omega; ~ u(x) \leq u(x_0)+\delta\}\subset
B(x_0,\epsilon),$$ for a sufficiently small $\delta>0$.
Therefore, the statement of Theorem 5.4 in \cite{huang} can be
directly applied. 
Once the $\mathcal{C}^{2,\beta}$ regularity is established, the
$\mathcal{C}^{k+2,\beta}$ regularity follows as in Proposition 9.1 in \cite{Cafcab}.
\end{proof}
 
\bigskip

\noindent {\bf Proof of Theorem \ref{thm-density}.} 
Without loss of generality we assume that $\Omega$ is starshaped with
respect to $B=B(0,r)\subset \Omega$.  Let $u\in {\mathcal A}$ and define
$u_\lambda(x)= \frac 1{\lambda^2}u(\lambda x)$ for $0<\lambda<1$. 
Then:
\begin{equation*}
%\nabla u_\lambda (x) = \frac 1\lambda \nabla u(x), \quad \nabla^2
%u_\lambda (x) =\nabla^2 u(\lambda x),\quad 
\det \nabla^2 u_\lambda(x)= c_0 \quad\forall a.e.~ x\in\Omega
\end{equation*}
and:
$$ \| u_\lambda\|_{L^2(\Omega)} =\lambda^{-3} \| u\|_{L^2(\lambda
  \Omega)}, \quad
\| \nabla u_\lambda\|_{L^2(\Omega)} =\lambda^{-2} \| u\|_{L^2(\lambda
  \Omega)}, \quad 
\| \nabla^2 u_\lambda\|_{L^2(\Omega)} =\lambda^{-1} \| u\|_{L^2(\lambda
  \Omega)}.$$
As a consequence, $u_\lambda \in {\mathcal A}$ for all 
$\lambda\in (0,1)$, and  $u_\lambda \to u$ strongly in $W^{2,2}(\Omega)$ as $\lambda\to 1^-$.   

So far we have used only the fact that $\Omega$ is starshaped with
respect to the origin $0$. Now, since $\Omega$ is star-shaped with
respect to an open ball $B$, we have $\lambda \bar\Omega \subset
\Omega$ for all $0< \lambda<1$.
%
%For otherwise, since $\overline \Omega$ is also starshaped
%with respect to $B$, the open segment $(0,x) \subset \R^2$ must
%intersect  $\partial \Omega$ at some point $y$. On
%the other hand, for any $z\in B$, the segment $[z,x]$ lies in
%$\overline \Omega$, which implies that $y$ is inside an open 
%cone $C \subset \overline \Omega$ with vertex at $x$. This implies that
%$y$ cannot be a boundary point, which is a contradiction. The claim is
%proved. As a consequence $\partial \Omega \subset (1/\lambda)
%\Omega$ for all $\lambda <1$.
%
Hence, in view of Theorem \ref{thm-regularity},  $u_\lambda\in
\mathcal{C}^\infty(\bar \Omega) \cap {\mathcal A}$, which proves the claim. 
\endproof

\section{\v{S}ver\'ak's arguments: a proof of Theorem \ref{thm-convex}.}

\begin{definition}\label{defi1}
We say that a mapping $v\in \mathcal{C}^0(\Omega, \R^2)$ is connectedly locally one-to-one
iff it is locally one-to-one outside of a closed set $S\subset \Omega$
of measure zero, for which $\Omega\setminus S$  is connected. 
\end{definition} 

\begin{definition}\label{intdil}
Let $v\in W^{1,2} (\Omega, \R^2)$ and let $\det\nabla v \ge  0$
a.e. in $\Omega$. 
We  say that $v$ has integrable dilatation iff: 
$$ \forall a.e.~ x\in \Omega \qquad |\nabla v|^2 (x) \le K(x) \det \nabla v(x)  $$ 
with some function $K \in L^1(\Omega)$.
\end{definition} 
 
For the proof of Theorem \ref{thm-convex}, the first result we propose is essentially a combination of arguments
in \v{S}ver\'ak's unpublished paper \cite{Sve}.  In this section, we will gather all the
details of its proof.

\begin{theorem}[\v{S}ver\'ak]\label{thm-convex2}
If $u\in W^{2,2}(\Omega)$ satisfies: 
\begin{equation}\label{hespos}
\det \nabla^2 u (x) >0 \quad \forall a.e.~ x\in\Omega,
\end{equation}
then $u\in \mathcal{C}^1(\Omega)$. If additionally  $v= \nabla u$ is connectedly locally
one-to-one, then modulo a global sign change, $u$ is locally
convex  in $\Omega$. In particular, when $\Omega$ is convex then $u$
is either convex or concave in the whole $\Omega$. 
\end{theorem}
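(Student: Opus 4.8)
The plan is to first establish the $\mathcal{C}^1$ regularity of $u$, and then to deduce local convexity (up to sign) under the extra hypothesis that $v=\nabla u$ is connectedly locally one-to-one. For the $\mathcal{C}^1$ regularity, the key observation is that $v = \nabla u \in W^{1,2}(\Omega,\R^2)$ has non-negative Jacobian $\det\nabla v = \det\nabla^2 u > 0$ a.e., and moreover $v$ has \emph{integrable dilatation} in the sense of Definition \ref{intdil}: indeed, by the arithmetic-geometric mean inequality applied to the eigenvalues of the symmetric matrix $\nabla^2 u$, one has $|\nabla^2 u|^2 \le (\mathrm{tr}\,\nabla^2 u)^2 = (\Delta u)^2$, and since $\Delta u \in L^2$ while $\det\nabla^2 u>0$ a.e., the pointwise bound $|\nabla v|^2 \le K \det\nabla v$ holds with $K = |\nabla^2 u|^2/\det\nabla^2 u$; one checks $K \in L^1(\Omega)$ using $|\nabla^2 u|^2 = (\Delta u)^2 - 2\det\nabla^2 u$ and $\int_\Omega(\Delta u)^2 < \infty$. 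Hence Theorem \ref{thm-IS} (Iwaniec--\v{S}ver\'ak) applies: $v$ is either constant or both open and discrete, and in particular $v$ admits a continuous representative. Since $\nabla u$ is then continuous, $u\in\mathcal{C}^1(\Omega)$.

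Next, assuming $v=\nabla u$ is connectedly locally one-to-one, I would upgrade "locally one-to-one outside a small closed set" to genuine local injectivity and then to local monotonicity of the gradient. The idea: outside the exceptional closed null set $S$, the continuous map $v$ is a local homeomorphism with $\det\nabla v>0$ a.e., so locally it is an orientation-preserving homeomorphism onto its image. The quadratic form $\nabla^2 u$, being a symmetric matrix with positive determinant a.e., has eigenvalues of the same sign a.e.; by continuity of $v$ and connectedness of $\Omega\setminus S$, this sign is locally constant, and — after a global sign change if necessary — we may assume $\nabla^2 u$ is positive definite a.e. on $\Omega\setminus S$. One then argues that $v$ is locally a monotone map in the sense of $\langle v(x)-v(y), x-y\rangle \ge 0$ on small balls: this follows by approximating $u$ by smooth convex functions on such balls (using that $\nabla^2 u \ge 0$ distributionally there, which is inherited from the a.e. positive definiteness plus $W^{2,2}$ regularity) or by integrating along segments. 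Local monotonicity of $\nabla u$ is equivalent to local convexity of $u$.

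The final assertion follows from a connectedness/continuation argument: if $\Omega$ is convex, cover it by small balls on each of which $u$ is convex (or on each of which $u$ is concave), observe that on overlapping balls the two alternatives are incompatible unless they agree, and conclude by connectedness of $\Omega$ that one alternative holds globally; global convexity on a convex domain then follows since local convexity plus convexity of the domain gives convexity on every segment.

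The main obstacle I anticipate is the passage from "connectedly locally one-to-one" (a topological hypothesis about a representative of $v$) to the \emph{a.e.} sign-definiteness of $\nabla^2 u$ being globally consistent, and then to the distributional inequality $\nabla^2 u \ge 0$ on small balls that legitimizes the convexity conclusion. In particular one must rule out that the gradient map "folds" across the exceptional set $S$ — this is exactly where the hypotheses that $S$ has measure zero and that $\Omega\setminus S$ is connected are used, together with the degree-theoretic/openness information supplied by Theorem \ref{thm-IS}. Making this rigorous — showing that the locally constant sign of the eigenvalues of $\nabla^2 u$ on $\Omega\setminus S$ cannot jump and that $S$ is removable for the purposes of establishing monotonicity — is the technical heart of \v{S}ver\'ak's argument, and is presumably where the bulk of this section's work lies.
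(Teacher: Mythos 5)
There are two genuine gaps. First, your route to $\mathcal{C}^1$ regularity through Theorem \ref{thm-IS} rests on the claim that $v=\nabla u$ has integrable dilatation whenever $\det\nabla^2 u>0$ a.e.; this is false. The identity $|\nabla^2 u|^2=(\Delta u)^2-2\det\nabla^2 u$ and $\Delta u\in L^2$ only give $K=|\nabla^2u|^2/\det\nabla^2u=(\Delta u)^2/\det\nabla^2 u-2$, and without a lower bound on $\det\nabla^2 u$ there is no way to conclude $K\in L^1$. Example \ref{example} in the paper is exactly a $W^{2,\infty}$ counterexample: there $\det\nabla^2u>0$ a.e.\ yet $K\geq 2/(x^2(1-y^2))\notin L^1$. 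Invoking Theorem \ref{thm-IS} here is also logically backwards: the hypothesis ``$v$ connectedly locally one-to-one'' in Theorem \ref{thm-convex2} is put in place precisely to avoid assuming integrable dilatation, and it is only their combination (Corollary \ref{thm-convex3}, or Theorem \ref{thm-convex} where $\det\nabla^2u\geq c_0>0$ makes $K\leq |\nabla^2u|^2/c_0\in L^1$) that uses Iwaniec--\v{S}ver\'ak. The correct and simpler route to continuity of $v$ is the Vodopyanov--Goldstein theorem (Theorem \ref{modulus}): $v\in W^{1,2}$ with $\det\nabla v>0$ a.e.\ is already continuous.

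Second, the heart of the convexity statement is missing. You assert that the common sign of the eigenvalues of $\nabla^2 u$ is locally constant on $\Omega\setminus S$ ``by continuity of $v$''; but $\nabla v=\nabla^2u$ is merely an $L^2$ field, and continuity of $v$ gives no control on the a.e.\ sign of its Hessian (again Example \ref{example}: the sign flips across $\{x=0\}$ while $v$ is continuous). You correctly identify this as the technical heart, but you do not supply a mechanism, so the proposal stops exactly where the proof has to start. The paper's argument is different: after a global sign change one picks a single Lebesgue point $x_0\in\Omega\setminus S$ of $\nabla^2u$ with positive definite Lebesgue value $A$; the blow-up estimates of Lemma \ref{controlofu} together with Lemma \ref{convexiseasy} produce a supporting hyperplane at $x_0$; then J.~Ball's characterization of strict convexity (locally one-to-one gradient plus one locally supporting hyperplane) is propagated along a finite chain of balls in the path-connected open set $\Omega\setminus S$, giving local convexity on all of $\Omega\setminus S$ without ever discussing pointwise signs of eigenvalues. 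Finally, your last step also skips the passage across $S$: local convexity is only known off $S$, and to get convexity on convex subsets of $\Omega$ (and global convexity when $\Omega$ is convex) the paper restricts $u$ to a.e.\ line $L$, uses that $u\in W^{2,1}(L\cap\Omega)$ is locally convex off a one-dimensional null set, hence convex on components of $L\cap\Omega$, and then upgrades from a.e.\ line to every line by continuity of $\nabla u$; some such removability argument for $S$ is needed and is absent from your sketch.
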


We quote now  the result, which will be crucial for the proof of Theorem \ref{thm-convex2}:

\begin{theorem}[Iwaniec and \v{S}ver\'ak \cite{IwanSve}]\label{thm-IS}
Let $v\in W^{1,2}(\Omega, \mathbb{R}^2)$ be as in Definition \ref{intdil}.
Then there exists a homeomorphism $h\in W^{1,2}(\Omega', \Omega)$
and a holomorphic function $\varphi \in W^{1,2}( \Omega' ,\R^2 
={\mathbb C})$ such that: 
$$ v = \varphi \circ h^{-1}. $$ 
In particular, $v$ is either constant or connectedly locally
one-to-one, and in the latter case the singular set $S= h((\nabla\phi)^{-1}\{0\})$.
\end{theorem}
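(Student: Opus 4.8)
The plan is to reduce the statement to the existence of a homeomorphic Sobolev solution of the (possibly degenerate) Beltrami equation attached to $v$, and then to recover the factorization and the topological dichotomy by a Stoilow-type argument. Identify $\R^2$ with $\mathbb{C}$, write $z=x_1+ix_2$, and regard $v:\Omega\to\mathbb{C}$. Wherever $\partial_z v\neq 0$ --- which holds a.e.\ on $\{\nabla v\neq 0\}$ --- set $\mu=(\partial_{\bar z}v)/(\partial_z v)$, and put $\mu=0$ elsewhere. The hypothesis of Definition \ref{intdil} translates into $|\mu|<1$ a.e.\ together with
$$ K_\mu(z):=\frac{1+|\mu(z)|}{1-|\mu(z)|}\;\le\; K(z)\qquad\mbox{for a.e. }z\in\Omega, $$
so $K_\mu\in L^1(\Omega)$. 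Extend $\mu$ by $0$ to all of $\mathbb{C}$. The crux is the following: there is a homeomorphism $G:\mathbb{C}\to\mathbb{C}$ with $G\in W^{1,2}_{loc}$, $\det\nabla G>0$ a.e., $\partial_{\bar z}G=\mu\,\partial_z G$ a.e., and $G^{-1}\in W^{1,2}_{loc}$. Granting this, set $\Omega'=G(\Omega)$ (a bounded domain, since $G(\bar\Omega)$ is compact), $h=(G|_\Omega)^{-1}:\Omega'\to\Omega$, and $\varphi:=v\circ h$. Since $v$ and $G$ have the same complex dilatation a.e.\ on $\Omega$, the composition $\varphi=v\circ G^{-1}$ has complex dilatation $0$ a.e.; using the chain rule together with Luzin's condition (N) for planar Sobolev homeomorphisms, $\varphi\in W^{1,1}_{loc}(\Omega',\mathbb{C})$ with $\partial_{\bar z}\varphi=0$ a.e., hence $\varphi$ is holomorphic by Weyl's lemma. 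A change of variables then yields $\int_{\Omega'}|\nabla\varphi|^2=2\int_{\Omega'}\det\nabla\varphi=2\int_\Omega\det\nabla v\le\int_\Omega|\nabla v|^2<\infty$ and $\int_{\Omega'}|\nabla h|^2\le C\int_\Omega K_\mu<\infty$, so $h\in W^{1,2}(\Omega',\Omega)$, $\varphi\in W^{1,2}(\Omega',\mathbb{C})$, and $v=\varphi\circ h^{-1}$, as asserted.

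To prove the crux I would argue by approximation by quasiconformal maps. For $n\ge 2$ let $E_n=\{|\mu|\le 1-1/n\}$ and $\mu_n=\mu\chi_{E_n}$, so that $\|\mu_n\|_{L^\infty}<1$, $\mu_n\to\mu$ a.e., and $\mathrm{supp}\,\mu_n\subset\bar\Omega$ is a fixed bounded set. By the measurable Riemann mapping theorem (Ahlfors--Bers) there is a unique quasiconformal homeomorphism $G_n:\mathbb{C}\to\mathbb{C}$, hydrodynamically normalized at $\infty$, solving $\partial_{\bar z}G_n=\mu_n\,\partial_z G_n$. Its distortion function satisfies $K_{G_n}\le\max(K,1)=:\tilde K\in L^1$. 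The point is that this \emph{integrable} bound --- rather than the $L^\infty$ bound of the classical theory --- still produces, via length--area (Courant--Lebesgue) and conformal-modulus estimates, uniform (in $n$) control of the moduli of continuity of $G_n$ and of $G_n^{-1}$ on compact sets, together with uniform local $W^{1,2}$ bounds on $G_n$, enough to ensure that the images of small annuli neither collapse nor blow up. Extracting a subsequence, $G_n\to G$ and $G_n^{-1}\to G^{-1}$ locally uniformly; the limit $G$ is a homeomorphism of $\mathbb{C}$, lies in $W^{1,2}_{loc}$ with $\det\nabla G>0$ a.e., satisfies $\partial_{\bar z}G=\mu\,\partial_z G$ a.e.\ (passing to the limit in the Beltrami equation), and has $G^{-1}\in W^{1,2}_{loc}$ because $\int|\nabla G^{-1}|^2\le C\int K_G=C\int K_\mu<\infty$ by change of variables. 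This is exactly the crux.

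I expect this second step to be the main obstacle: solving the \emph{degenerate} Beltrami equation (where $\|\mu\|_{L^\infty}$ may equal $1$) with a homeomorphic Sobolev solution under only the $L^1$ integrability of the distortion, i.e.\ replacing every invocation of $\|K\|_{L^\infty}$ in the quasiconformal machinery (normal family, modulus of continuity, nondegeneracy of the limit) by estimates controlled solely by $\|K\|_{L^1}$. Everything else --- the composition/chain-rule bookkeeping, the integrability estimates, and the final topological statement --- is soft once $G$ is in hand. It is precisely because $v$ is \emph{already} assumed to belong to $W^{1,2}$ that no self-improvement of Sobolev exponents is needed and the bare hypothesis $K\in L^1$ suffices; this is the content of \cite{IwanSve}.

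Finally, the topological alternative is read off directly from the factorization. If $\varphi$ is constant then $v=\varphi\circ h^{-1}$ is constant. Otherwise $\varphi$ is a nonconstant holomorphic function on $\Omega'$, so its critical set $Z:=(\nabla\varphi)^{-1}\{0\}$ is discrete and relatively closed in $\Omega'$. Since $h$ is a homeomorphism, $S:=h(Z)$ is relatively closed in $\Omega$, has Lebesgue measure zero, and $\Omega\setminus S$ is connected (removing an at most countable set from a planar domain does not disconnect it). On $\Omega\setminus S$ the map $v=\varphi\circ h^{-1}$ is the composition of the homeomorphism $h^{-1}$ with a locally injective holomorphic map, hence is locally one-to-one. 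Thus $v$ is connectedly locally one-to-one in the sense of Definition \ref{defi1}, with singular set $S=h\big((\nabla\varphi)^{-1}\{0\}\big)$, which completes the proof.
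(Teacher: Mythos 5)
The paper does not actually prove this statement: Theorem \ref{thm-IS} is imported verbatim from \cite{IwanSve} and used as a black box, so there is no internal proof to compare against. Your roadmap does follow the strategy of the original Iwaniec--\v{S}ver\'ak argument: pass to the Beltrami coefficient $\mu$ of $v$, truncate, solve with quasiconformal maps $G_n$ via the measurable Riemann mapping theorem, use the uniform bound $\int_{G_n(\Omega)}|\nabla G_n^{-1}|^2\le\int_\Omega K_{G_n}\le \|K\|_{L^1}+C$ on the inverses, pass to the limit, and read off the Stoilow factorization and the topological dichotomy. The final bookkeeping (energy identities for $\varphi$ and $h$, and the deduction that $v$ is connectedly locally one-to-one with $S=h\big((\nabla\varphi)^{-1}\{0\}\big)$) is sound.

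However, the step you isolate as the crux is false as you state it: for a general $\mu$ with $|\mu|<1$ a.e.\ and $K_\mu\in L^1$ there need not exist \emph{any} homeomorphic $W^{1,2}_{loc}$ solution of $\partial_{\bar z}G=\mu\,\partial_z G$. Consider the radial map $F(z)=\big(\tfrac12+\tfrac{|z|}{2}\big)\tfrac{z}{|z|}$ on the punctured unit disk: its distortion is $K(z)=(1+|z|)/|z|\in L^1(B_1)$, yet $F$ maps $B_1\setminus\{0\}$ onto the annulus $\{\tfrac12<|w|<1\}$. If a homeomorphic Sobolev solution $H$ of the same Beltrami equation existed on $B_1$, then $F\circ H^{-1}$ would be a conformal bijection from $H(B_1)\setminus\{H(0)\}$ (doubly connected with one degenerate boundary component, hence of infinite modulus) onto an annulus of finite modulus --- impossible. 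So no modulus or length--area estimate controlled by $\|K\|_{L^1}$ alone can prevent degeneration: the uniform $W^{1,2}$ bound on $G_n^{-1}$ yields equicontinuity and a continuous \emph{monotone} limit, but not injectivity, and there is no uniform bound on the forward maps $G_n$ from $\|K\|_{L^1}$. What rescues the theorem is precisely the datum you postpone to the end, namely the existence of the $W^{1,2}$ map $v$ realizing $\mu$: in \cite{IwanSve} the nondegeneracy of the limit is extracted from $v$ itself, by showing that $v\circ G_n^{-1}$ converges to a (nonconstant, hence open and discrete) holomorphic function and deducing a posteriori that the monotone limit cannot collapse continua. Note that in the counterexample above, $\mu$ is not the Beltrami coefficient of any nonconstant $W^{1,2}$ map of integrable dilatation, consistently with the theorem. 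As written, your argument therefore has a genuine gap at its central step: the architecture is right, but the order of quantifiers --- construct $G$ from $\mu$ alone, then bring in $v$ --- cannot be made to work.
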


Without having Theorem \ref{thm-IS} at hand, \v{S}ver\'ak proved in \cite{Sve} that
if $u\in W^{2,2}(\Omega)$ satisfies $\det \nabla^2 u>0$ a.e. in $\Omega$, then there exists a closed set
$S\subset \Omega$ of measure 
zero such that on each component of $\Omega \setminus S$, $u$ is
either locally convex or locally concave. In fact, the main step in
the proof is to show that any such map
is locally one-to-one outside a set of measure zero, which \v{S}ver\'ak
has achieved by using consequences of a version of  Lemma
\ref{controlofu} below and the classical degree theory.

\medskip

Combining Theorem \ref{thm-convex2} with Theorem \ref{thm-IS} one
directly obtains: 
 
\begin{corollary}\label{thm-convex3}
Let $u\in W^{2,2}(\Omega)$ satisfy (\ref{hespos}) and  be such that $\nabla u$ has integrable dilatation. 
Then $u\in \mathcal{C}^1(\Omega)$ and modulo a global sign change, $u$ is locally convex 
in $\Omega$. 
\end{corollary}

Theorem \ref{thm-convex} is then, obviously, a particular case of the above corollary, where the displacement $u$
trivially satisfies its assumptions.
In the remaining part of this section, we will prove Theorem \ref{thm-convex2}.
We first remind a key result on the modulus of continuity of 2d deformations in
$W^{1,2}$ with positive Jacobian:

\begin{theorem}[Vodopyanov and Goldstein \cite{VodoGold}]\label{modulus}
Assume that $v\in W^{1,2}(\Omega, \R^2)$ and that $\det \nabla v >0$ a.e. in
$\Omega$. Then $v$ is continuous in $\Omega$, and for any $B(x,\delta) \subset
  B(x,R) \subset \Omega$ we have:
\begin{equation}\label{modest}
{\rm osc}_{B(x,\delta)} v  \le  \sqrt{2\pi} (\ln \frac R\delta)^{-1/2}
\|\nabla v\|_{L^2(B(x,R))}.
\end{equation} 
\end{theorem}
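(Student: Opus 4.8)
The plan is to combine a \emph{monotonicity} property of Sobolev maps with nonnegative Jacobian with a one–dimensional Sobolev estimate on concentric circles, and then to integrate in the radial variable.

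First, fix $x$ with $B(x,R)\subset\Omega$. For a.e. radius $r\in(0,R)$ the trace of $v$ on the circle $\partial B(x,r)$ lies in $W^{1,2}(\partial B(x,r),\R^2)$, hence has a continuous representative whose image is a closed curve. Writing $g(r)=\int_{\partial B(x,r)}|\nabla v|^2\,\mathrm{d}s$, the tangential derivative is dominated by $|\nabla v|$, so by the fundamental theorem of calculus along the circle and Cauchy--Schwarz,
\[
\mathrm{osc}_{\partial B(x,r)}v\ \le\ \int_{\partial B(x,r)}|\partial_\tau v|\,\mathrm{d}s\ \le\ (2\pi r)^{1/2}\,g(r)^{1/2}\qquad\text{for a.e. }r\in(0,R).
\]
Moreover, by polar coordinates $\int_0^R g(r)\,\mathrm{d}r\le\|\nabla v\|_{L^2(B(x,R))}^2$, so $g\in L^1(0,R)$ and the above bound is finite for a.e. $r$.

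The second, and principal, ingredient is the claim that for a.e. $r\in(\delta,R)$ one has $\mathrm{osc}_{B(x,\delta)}v\le\mathrm{osc}_{\partial B(x,r)}v$. This is where the hypothesis $\det\nabla v>0$ a.e. enters: a $W^{1,2}(\Omega,\R^2)$ map with nonnegative Jacobian is monotone in the sense of Lebesgue, i.e. its oscillation over a closed ball does not exceed its oscillation over the bounding circle. I would prove this by mollifying $v$ to smooth maps $v_\varepsilon\to v$ in $W^{1,2}_{\mathrm{loc}}$ and using degree theory: since the distributional Jacobians of $v_\varepsilon$ are asymptotically nonnegative, for a generic circle the winding number of $v_\varepsilon|_{\partial B(x,r)}$ around any point off its image is nonnegative, so any value attained on $\bar B(x,r)$ but not on $\partial B(x,r)$ would be surrounded by $v_\varepsilon(\partial B(x,r))$; letting $\varepsilon\to0$ along radii where the circle traces converge uniformly yields the inequality for $v$ itself. (When $\nabla u$ has integrable dilatation one could instead invoke the factorization $v=\varphi\circ h^{-1}$ of Theorem \ref{thm-IS}, but the degree argument is self-contained.) I expect this monotonicity/approximation step to be the main obstacle, precisely because continuity of $v$ — which makes the degree argument transparent — is itself part of what must be established, so the limit has to be carried out on the level of circle traces, using the circle estimate above to control them.

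Granting the two ingredients, the theorem follows by a logarithmic integration. Combining the displayed bounds gives $(\mathrm{osc}_{B(x,\delta)}v)^2\le 2\pi r\,g(r)$ for a.e. $r\in(\delta,R)$; dividing by $r$ and integrating over $(\delta,R)$,
\[
(\mathrm{osc}_{B(x,\delta)}v)^2\,\ln\tfrac R\delta\ =\ (\mathrm{osc}_{B(x,\delta)}v)^2\int_\delta^R\frac{\mathrm{d}r}{r}\ \le\ 2\pi\int_\delta^R g(r)\,\mathrm{d}r\ \le\ 2\pi\,\|\nabla v\|_{L^2(B(x,R))}^2,
\]
which is exactly (\ref{modest}). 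Finally, fixing $R$ and letting $\delta\to0$ shows $\mathrm{osc}_{B(x,\delta)}v\to0$ locally uniformly in $x$, so $v$ has a continuous representative in $\Omega$, which justifies a posteriori the pointwise statements used above and completes the proof.
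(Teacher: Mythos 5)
Your quantitative steps are exactly the paper's: Fubini plus the one-dimensional Sobolev (Morrey) embedding on a.e. circle $\partial B(x,r)$ to get $\mathrm{osc}_{\partial B(x,r)}v\le\sqrt{2\pi r}\,\big(\int_{\partial B(x,r)}|\nabla v|^2\big)^{1/2}$, followed by squaring, dividing by $r$ and integrating the logarithm over $(\delta,R)$; this part is correct and yields (\ref{modest}) with the right constant. The difference is that the paper does not prove the monotonicity property (\ref{mon}) at all — it is quoted from Vodopyanov--Goldstein (see also Manfredi, Fonseca--Gangbo, \v{S}ver\'ak) — whereas you attempt to prove it, and your proposed proof of this principal step has a genuine flaw.

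The problem is the passage through mollification. Smoothing does not preserve the pointwise sign condition: $\det\nabla v_\varepsilon$ can be negative on sets of positive measure even though $\det\nabla v>0$ a.e., and the convergence $\det\nabla v_\varepsilon\to\det\nabla v$ in $L^1_{\mathrm{loc}}$ (``asymptotic nonnegativity'') is not enough for your topological inference. For a smooth map whose Jacobian changes sign, it is simply false that ``any value attained on $\overline{B(x,r)}$ but not on $\partial B(x,r)$ is surrounded by $v_\varepsilon(\partial B(x,r))$'': a fold attains values lying in the zero-degree component of $\R^2\setminus v_\varepsilon(\partial B(x,r))$, and a nonnegative winding number does not exclude this. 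So the inequality $\mathrm{osc}_{B(x,\delta)}v\le\mathrm{osc}_{\partial B(x,r)}v$ does not follow for $v_\varepsilon$, and hence cannot be passed to the limit. The standard repair is to run the degree argument directly on $v$ itself, which is possible precisely because the trace of $v$ on a.e. circle is continuous (your circle estimate): for such $r$ the degree $\deg(v,B(x,r),\cdot)$ is defined from the boundary trace, vanishes on the unbounded component of $\R^2\setminus v(\partial B(x,r))$, and the change-of-variables/degree identity $\int_{B(x,r)}(\varphi\circ v)\det\nabla v\,\mathrm{d}x=\int_{\R^2}\varphi(z)\deg(v,B(x,r),z)\,\mathrm{d}z$, valid for $W^{1,2}$ maps in the plane with $W^{1,2}$ circle trace, combined with $\det\nabla v>0$ a.e., forces $v(y)$ for a.e. $y\in B(x,r)$ to lie in the region enclosed by the trace curve; this gives the (essential) oscillation inequality for $v$ without ever invoking approximating maps. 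Alternatively, one can do what the paper does and cite the monotonicity/continuity result of Vodopyanov and Goldstein (this is, after all, the content of the quoted theorem); note also that in the present paper an independent route via Theorem \ref{thm-IS} would be circular only in appearance, but it is unnecessary once the degree identity for $v$ is used. With the monotonicity step fixed in this way, the rest of your argument, including deducing continuity by letting $\delta\to0$ locally uniformly, is fine.
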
 
\begin{proof}
By a result of Vodopyanov and Goldstein \cite{VodoGold}
$v$ is continuous (see also \cite{Man, FonsGang, Sve2}). In fact, a
key ingredient of this result is to show that $\phi$ is a monotone map,
i.e. for $B_\rho= B(x,\rho)$: 
\begin{equation}\label{mon} 
{\rm osc}_{B_\rho} v = {\rm osc}_{\partial B_\rho} v, 
\end{equation}
and hence $v$ has the asserted modulus of continuity by
\cite [Theorem 4.3.4]{Mor} (see also \cite{Man, FonsGang}). We sketch the last
part of the proof for the convenience of the reader. By Fubini's
theorem $v$ belongs to $W^{1,2}(\partial B_\rho)$ for almost every
$\rho\in (\delta, R)$. Hence the Morrey's theorem of embedding of $W^{1,2}$ into
$\mathcal{C}^0$ for the one-dimensional 
set $\partial B_\rho$ yields:
$$ \forall a.e.~\rho \in (\delta, R) \qquad {\rm osc}_{B_\delta} v \le {\rm osc}_{B_\rho} v = {\rm
  osc}_{\partial B_\rho} v \le \sqrt{2\pi\rho} \Big (
\int_{\partial {B_\rho}} |\nabla v |^2 \Big )^{1/2} . $$ 
To conclude, one squares both sides of the above inequality, divides by $\rho$ and integrates from
$\delta$ to $R$, in order to deduce (\ref{modest}). 
\end{proof}

\begin{corollary}\label{convergence1} 
Assume that $v_n\in W^{1,2}(\Omega,\mathbb{R}^2)$ is a bounded
sequence such that $\det \nabla v_n >0$ a.e. in
$\Omega$. Then, up to a subsequence, $v_n$ converges locally uniformly and
also weakly in $W^{1,2}$ to a continuous mapping $v \in
W^{1,2}(\Omega, \R^2)$ satisfying $\det \nabla v \ge 0$ a.e. in $\Omega$. 
\end{corollary}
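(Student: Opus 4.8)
The plan is to combine the uniform modulus-of-continuity estimate from Theorem \ref{modulus} with the Arzel\`a-Ascoli theorem and standard weak compactness in $W^{1,2}$, the only genuinely nontrivial point being to pass the sign condition $\det\nabla v_n>0$ to the limit. First I would fix an arbitrary relatively compact subdomain $\Omega'\Subset\Omega$ and a radius $R$ with $\mathrm{dist}(\Omega',\partial\Omega)>R$. Since each $v_n$ satisfies $\det\nabla v_n>0$ a.e., Theorem \ref{modulus} applies and gives, for every $x\in\Omega'$ and every $\delta<R$,
\begin{equation*}
\mathrm{osc}_{B(x,\delta)} v_n \le \sqrt{2\pi}\,\big(\ln(R/\delta)\big)^{-1/2}\,\|\nabla v_n\|_{L^2(\Omega)} \le C\big(\ln(R/\delta)\big)^{-1/2},
\end{equation*}
where $C$ is independent of $n$ because the sequence is bounded in $W^{1,2}(\Omega,\mathbb{R}^2)$. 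This is a uniform-in-$n$ modulus of continuity on $\Omega'$. To upgrade oscillation control to uniform boundedness of $v_n$ on $\Omega'$, I would note that boundedness in $W^{1,2}$ gives a uniform bound on $\|v_n\|_{L^2(\Omega')}$, hence on the averages of $v_n$ over some fixed ball, which combined with the oscillation estimate yields $\sup_n\|v_n\|_{L^\infty(\Omega')}<\infty$. Arzel\`a-Ascoli then produces a subsequence converging uniformly on $\Omega'$; a diagonal argument over an exhaustion of $\Omega$ by such $\Omega'$ gives a single subsequence converging locally uniformly on $\Omega$ to a continuous limit $v$.

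Next, since the sequence is bounded in $W^{1,2}(\Omega,\mathbb{R}^2)$, after passing to a further (still not relabeled) subsequence we have $v_n\rightharpoonup \tilde v$ weakly in $W^{1,2}(\Omega,\mathbb{R}^2)$ for some $\tilde v\in W^{1,2}$. Weak $W^{1,2}$ convergence implies convergence in $L^2$ on bounded domains (or at least in $\mathcal{D}'$), which together with the established locally uniform convergence forces $\tilde v = v$; in particular $v\in W^{1,2}(\Omega,\mathbb{R}^2)$ and $\nabla v_n\rightharpoonup \nabla v$ weakly in $L^2$.

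The main obstacle, and the last step, is to show $\det\nabla v\ge 0$ a.e.\ in $\Omega$. Here one cannot simply pass to the limit in the quadratic expression $\det\nabla v_n$ under mere weak $L^2$ convergence of the gradients. The standard remedy is weak continuity of the Jacobian: for maps bounded in $W^{1,2}(\Omega,\mathbb{R}^2)$ (so that each $\det\nabla v_n\in L^1$), the distributional identity $\det\nabla v_n = \partial_1\big((v_n)_1\,\partial_2 (v_n)_2\big) - \partial_2\big((v_n)_1\,\partial_1 (v_n)_2\big)$ together with the compact embedding $W^{1,2}(\Omega)\hookrightarrow\hookrightarrow L^2_{loc}(\Omega)$ (giving strong convergence of $(v_n)_1$) lets one pass to the limit and conclude $\det\nabla v_n\to\det\nabla v$ in $\mathcal{D}'(\Omega)$. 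Since $\det\nabla v_n\ge 0$ a.e., the distributional limit $\det\nabla v$ is a nonnegative distribution, hence $\det\nabla v\ge 0$ a.e.\ in $\Omega$. This completes the proof; the only subtlety worth flagging is ensuring the product $(v_n)_1\,\nabla(v_n)_2$ is controlled in $L^1_{loc}$ uniformly, which follows from the uniform $L^\infty_{loc}$ bound on $v_n$ established above and the $L^2$ bound on $\nabla v_n$.
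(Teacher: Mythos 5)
Your proposal is correct and follows essentially the same route as the paper: the Vodopyanov--Goldstein modulus of continuity from Theorem \ref{modulus} plus Arzel\`a--Ascoli (and weak $W^{1,2}$ compactness) for the locally uniform limit, and a compensated-compactness argument for $\det\nabla v\ge 0$. The only cosmetic difference is that the paper quotes the Div-Curl Lemma applied to $\det\nabla v=-\nabla v_1\cdot\nabla^\perp v_2$, whereas you prove the same weak continuity of the Jacobian directly via the null-Lagrangian identity $\det\nabla v_n=\partial_1\big((v_n)_1\partial_2(v_n)_2\big)-\partial_2\big((v_n)_1\partial_1(v_n)_2\big)$ and strong local convergence of $(v_n)_1$, which is a valid in-place proof of that lemma in this special case.
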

\begin{proof}
The uniform convergence of a subsequence follows by Ascoli-Arzel\'a
theorem in view of Theorem \ref{modulus}. Noting that 
$\det\nabla v = - \nabla v_1 \cdot \nabla^\perp v_2$, the Div-Curl Lemma implies then
that the desired inequality is satisfied for the limit mapping $v$.
%
%Alternatively, we note that:
%$$\det\nabla v_n = - \mbox{div} \left((v_n)_1 \nabla^\perp (v_n)_2\right).$$
%Since $(v_n)_1 \nabla^\perp (v_n)_2$ converges weakly in $L^1$ to $v_1
%\nabla^\perp v_2$, if follows that $\det \nabla v_n $ converges to
%$\det \nabla v$ in distributions, hence preserving the non-strict
%inequality a.e. in $\Omega$.
\end{proof}

\begin{corollary}\label{convergence2} 
Let $u_n\in W^{2,2}(\Omega)$ be a bounded sequence 
such that $\det \nabla^2 u_n \ge c_0 \in \R$ a.e. in
$\Omega$. Then, up to a subsequence, $u_n$ converges  weakly in
$W^{2,2}$, as well as it converges locally uniformly together with
its gradients, to a $\mathcal{C}^1$ function $u \in
W^{2,2}(\Omega, \R^2)$ satisfying $\det \nabla^2 u \ge c_0$  a.e. in $\Omega$.
\end{corollary}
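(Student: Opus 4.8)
The plan is to run the argument of Corollary \ref{convergence1} one derivative up. First I would use reflexivity of $W^{2,2}(\Omega)$ to extract from the bounded sequence $(u_n)$ a subsequence (not relabeled) with $u_n\rightharpoonup u$ weakly in $W^{2,2}$ for some $u\in W^{2,2}(\Omega)$; by Rellich's theorem the same subsequence converges strongly in $W^{1,2}_{loc}$, so in particular $\nabla u_n\rightharpoonup\nabla u$ weakly in $W^{1,2}(\Omega,\R^2)$. Then, putting $v_n=\nabla u_n$, I observe that $(v_n)$ is bounded in $W^{1,2}$ and $\det\nabla v_n=\det\nabla^2 u_n\ge c_0>0$ a.e., so Corollary \ref{convergence1} (equivalently, Theorem \ref{modulus} together with Ascoli-Arzel\'a) yields a further subsequence along which $v_n$ converges locally uniformly, and weakly in $W^{1,2}$, to a continuous map $v\in W^{1,2}(\Omega,\R^2)$. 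Since weak $W^{1,2}$ limits are unique we must have $v=\nabla u$; hence $\nabla u$ is continuous, and together with the two-dimensional embedding $W^{2,2}(\Omega)\hookrightarrow\mathcal{C}^0(\Omega)$ this gives $u\in\mathcal{C}^1(\Omega)$ (one could also invoke Theorem \ref{thm-convex} directly).

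It remains to pass the pointwise bound to the limit. Exactly as in the proof of Corollary \ref{convergence1}, I would write $\det\nabla v_n=-\nabla v_n^1\cdot\nabla^\perp v_n^2$, note that $\nabla v_n^1$ is curl-free and $\nabla^\perp v_n^2$ is divergence-free, and apply the Div-Curl Lemma to conclude $\det\nabla v_n\rightharpoonup\det\nabla v$ in $\mathcal{D}'(\Omega)$. Testing against an arbitrary nonnegative $\varphi\in\mathcal{C}_c^\infty(\Omega)$ and passing to the limit in $\int_\Omega(\det\nabla v_n)\varphi\ge c_0\int_\Omega\varphi$ then gives, since $\det\nabla v=-\nabla v^1\cdot\nabla^\perp v^2\in L^1(\Omega)$, that $\det\nabla v\ge c_0$ a.e.; as $v=\nabla u$, this is precisely $\det\nabla^2 u\ge c_0$ a.e. in $\Omega$.

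Everything here is routine once Corollary \ref{convergence1} is available. The one point deserving care is the weak continuity of the $2\times2$ Jacobian under the Div-Curl Lemma: it is what allows the inequality $\det\nabla^2 u_n\ge c_0$ to survive in the limit even though the $v_n$ converge only weakly (not strongly) in $W^{1,2}$. I also note that the strict positivity $c_0>0$ is used -- through Theorem \ref{modulus} -- to obtain the equicontinuity of $(v_n)$, and hence both the locally uniform convergence of the gradients and the $\mathcal{C}^1$ regularity of $u$.
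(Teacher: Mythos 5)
Your argument is correct, and essentially the paper's, \emph{only} in the case $c_0>0$; but the statement assumes merely $\det\nabla^2 u_n\ge c_0\in\R$, with $c_0$ an arbitrary real number (possibly zero or negative). Your first step applies Corollary \ref{convergence1} (i.e.\ Theorem \ref{modulus} plus Ascoli--Arzel\'a) directly to $v_n=\nabla u_n$, and this requires $\det\nabla v_n>0$ a.e.; you even flag at the end that the strict positivity of $c_0$ is what gives the equicontinuity. So as written, the proof does not cover the stated hypothesis: when $c_0\le 0$ you have no lower bound on $\det\nabla v_n$ ensuring positivity, Theorem \ref{modulus} is not applicable, and the locally uniform convergence of the gradients (hence the $\mathcal{C}^1$ regularity of $u$) is not justified.

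The paper closes this gap with a small but essential trick, the same one used in Lemma \ref{controlofu}: set $v_n(x)=\nabla u_n(x)+(|c_0|+1)^{1/2}x^\perp$. Since $\nabla^2 u_n$ is symmetric and the gradient of $x\mapsto x^\perp$ is antisymmetric, the cofactor cross term in the $2\times 2$ expansion $\det(A+B)=\det A+\mathrm{cof}\,A:B+\det B$ vanishes, so
\begin{equation*}
\det\nabla v_n=\det\nabla^2 u_n+|c_0|+1\ge c_0+|c_0|+1\ge 1>0 \quad \mbox{a.e. in }\Omega,
\end{equation*}
and Corollary \ref{convergence1} applies to this modified sequence; because the added term is a fixed linear map, the locally uniform convergence of $\nabla u_n$ and the continuity of the limit gradient follow at once. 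Your second paragraph (Div--Curl applied to $\nabla u_n$, written as a curl-free times a divergence-free factor, and testing against nonnegative test functions) is fine for arbitrary $c_0$ and is exactly how the paper passes the inequality to the limit, so with the shift inserted your proof becomes complete and coincides with the paper's.
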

\begin{proof}
Let $v_n(x)=\nabla u(x) + (|c_0|+1)^{1/2}x^\perp$, where
$x^\perp = (x_1, x_2)^\perp = (-x_1, x_2)$. Clearly, $v_n\in
W^{1,2}(\Omega,\mathbb{R}^2)$  and, since $\nabla ^2 u_n$ is a
symmetric matrix, we get:
$$ \det\nabla v_n(x) = \det \nabla^2 u_n(x) + |c_0| + 1>0 \qquad \forall a.e.~ x\in\Omega.$$
The convergence assertion follows by Corollary \ref{convergence1}.
Again, the Div-Curl Lemma applied to sequence $\nabla u_n$  implies the desired inequality
for the limit function $u$. 
\end{proof}

A consequence of Theorem \ref{modulus} is the following
assertion about $W^{2,2}$ functions whose Hessian determinants are
uniformly  controlled from below:  

\begin{lemma}\label{controlofu} 
Assume that $u\in W^{2,2}(\Omega)$ satisfies:
\begin{equation}\label{hesc0}
\det \nabla^2 u(x)\geq c_0 \in\mathbb{R} \quad \forall a.e.~ x\in \Omega. 
\end{equation}
Then $u\in \mathcal{C}^1(\Omega)$. Moreover, if $x_0\in \Omega$ is  a Lebesgue
point for $\nabla^ 2 u$,  i.e. for some $A\in \R^{2\times 2}_\sym$: 
\begin{equation}\label{leb}
\omega(r):= \frac{1}{|B(x_0,r)|} \int_{B (x_0,r)} |\nabla^2 u - A|^2
{\rm d}x \to 0 \quad \mbox{as} \quad r\to 0^+,     
\end{equation}
then for all $\epsilon>0$ there exists $r_0>0$   such that:
\begin{equation}\label{graduest}
\begin{split}
\forall r<r_0\quad &\forall a\in D_r= \overline{B(x_0, r)} 
\qquad  \|\nabla u (x)- \nabla u(a) - A(x-a)\|_{\mathcal{C}^0(D_{r})}
\le \frac 12 \epsilon r,  \\
&\|u(x) - u(a) - \nabla u(a) \cdot (x-a) - \frac 12 (x-a)\cdot
A(x-a) \|_{\mathcal{C}^0(D_r)} \leq \epsilon r^2.
\end{split}
\end{equation}  
\end{lemma}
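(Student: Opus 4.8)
The plan is to establish the $\mathcal{C}^1$ regularity first and then to derive (\ref{graduest}) by a blow-up argument at the Lebesgue point $x_0$; the crucial point is that the lower bound (\ref{hesc0}) converts $L^2$-convergence of Hessians into uniform convergence of gradients, which is false for general $W^{2,2}$ maps in two dimensions. Set $s=\sqrt{|c_0|+1}$ and, exactly as in the proof of Corollary \ref{convergence2}, let $v=\nabla u+s\,(\cdot)^\perp$, so that $\det\nabla v=\det\nabla^2 u+|c_0|+1\ge 1$ a.e.\ in $\Omega$; by Theorem \ref{modulus} $v$, hence $\nabla u$, is continuous, so $u\in\mathcal{C}^1(\Omega)$. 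Now fix $x_0$ and $A$ as in (\ref{leb}) and, for $r$ small enough that $B(x_0,r)\subset\Omega$, introduce the rescaled functions
\[
u_r(y)=\frac{1}{r^2}\big(u(x_0+ry)-u(x_0)-r\nabla u(x_0)\cdot y\big),\qquad y\in B_1.
\]
Then $\nabla u_r(0)=0$, $\nabla^2 u_r(y)=\nabla^2 u(x_0+ry)$ (so the bound (\ref{hesc0}) passes to $u_r$ on $B_1$), and a change of variables rewrites (\ref{leb}) as $\|\nabla^2 u_r-A\|_{L^2(B_1)}^2=|B_1|\,\omega(r)\to 0$ as $r\to0^+$.

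The core of the argument is that $\nabla u_r\to Ay$ uniformly on $\overline{B_{1/2}}$ as $r\to0^+$. Put $v_r=\nabla u_r+s\,(\cdot)^\perp$, so that $\det\nabla v_r=\det\nabla^2 u_r+|c_0|+1\ge 1$ a.e.\ in $B_1$, $v_r(0)=0$, and $\nabla v_r\to\nabla\big(Ay+s\,y^\perp\big)$ in $L^2(B_1)$. In particular $\|\nabla v_r\|_{L^2(B_1)}$ stays bounded, and combining this with $v_r(0)=0$ and the oscillation estimate (\ref{modest}) (applied on concentric balls $B_{3/4}\subset B_{7/8}\subset B_1$) shows that $v_r$ is bounded in $W^{1,2}(B_{3/4})$. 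Hence, by Corollary \ref{convergence1}, along any sequence $r_n\to0^+$ some subsequence of $v_{r_n}$ converges locally uniformly on $B_{3/4}$ and weakly in $W^{1,2}$; uniqueness of the weak limit forces its gradient to be the constant matrix $\nabla\big(Ay+s\,y^\perp\big)$, so the limit is the affine map $y\mapsto Ay+s\,y^\perp$ (it vanishes at the origin). Therefore $\nabla u_{r_n}=v_{r_n}-s\,(\cdot)^\perp\to Ay$ locally uniformly on $B_{3/4}$, and since the limit does not depend on the chosen sequence, $\nabla u_r\to Ay$ uniformly on $\overline{B_{1/2}}$.

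It remains to un-rescale. Given $\epsilon>0$, pick $r_0>0$ with $\sup_{\overline{B_{1/2}}}|\nabla u_\rho(y)-Ay|\le \epsilon/8$ for all $\rho<2r_0$. For $r<r_0$ apply this with $\rho=2r$: since $\nabla u_{2r}(y)-Ay=\frac{1}{2r}\big(\nabla u(x)-\nabla u(x_0)-A(x-x_0)\big)$ for $x=x_0+2ry\in D_r=\overline{B(x_0,r)}$, we get $|\nabla u(x)-\nabla u(x_0)-A(x-x_0)|\le \frac{\epsilon}{4}r$ for all $x\in D_r$, hence $|\nabla u(x)-\nabla u(a)-A(x-a)|\le\frac{\epsilon}{2}r$ for all $a,x\in D_r$ by the triangle inequality through $x_0$; this is the first line of (\ref{graduest}). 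For the second line, the convexity of $D_r$ lets us integrate $\nabla u$ along $[a,x]$ and write
\[
u(x)-u(a)-\nabla u(a)\cdot(x-a)-\frac12(x-a)\cdot A(x-a)=\int_0^1\Big(\nabla u\big(a+t(x-a)\big)-\nabla u(a)-tA(x-a)\Big)\cdot(x-a)\,dt,
\]
and bounding the integrand by $\frac{\epsilon}{2}r$ (the first-line estimate at the pair $a+t(x-a),\,a$) and $|x-a|\le 2r$ yields the bound $\epsilon r^2$.

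The main obstacle, and essentially the only non-routine step, is the uniform convergence $\nabla u_r\to Ay$ in the middle paragraph: in two dimensions $L^2$-control of the Hessian does not by itself control the gradient uniformly, and what rescues the argument is (\ref{hesc0}), which makes the auxiliary maps $v_r$ have a positive Jacobian and thereby gives access --- uniformly in $r$ --- to the Vodopyanov--Goldstein modulus of continuity (Theorem \ref{modulus}) and to the resulting compactness (Corollary \ref{convergence1}).
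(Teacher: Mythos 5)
Your proof is correct and follows essentially the same route as the paper: add the term $\sqrt{|c_0|+1}\,x^\perp$ to $\nabla u$ to obtain a positive Jacobian, invoke the Vodopyanov--Goldstein estimate (Theorem \ref{modulus}) for continuity and equicontinuity, blow up at the Lebesgue point, identify the affine limit from the strong $L^2$ convergence of the Hessians together with the normalization at the center, and un-rescale to get (\ref{graduest}). The only differences are cosmetic (you rescale $u$ at second order on $B_1$ and pass through $\overline{B_{1/2}}$ with $\rho=2r$, while the paper rescales $\nabla u$ on $B_2$ and uses Poincar\'e plus Arzel\`a--Ascoli directly), so no further comment is needed.
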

\begin{proof}
Following Kirchheim \cite{Kir} we set $v=\nabla u$
and we write $\phi(x)= v(x) + (|c_0|+1)^{1/2}x^\perp$. Trivially $\phi \in
W^{1,2}(\Omega, \R^2)$ and, as before:
$$ \det \nabla \phi = \det \nabla^2 u + |c_0|+1 >0 \qquad \forall a.e.~
x\in\Omega. $$ 
Applying Theorem \ref{modulus} to $\phi$ shows that $v$ is
continuous and so  $u\in \mathcal{C}^1(\Omega)$.  

In what follows, we assume without loss of generality that $x_0=0$,
$u(0)=0$ and $v(0)=\nabla u(0)= 0$ (otherwise it is sufficient to translate
$\Omega$ and to  modify $u$ by its tangent map at $0$). 
For $r$ sufficiently small and for all $x\in B_2=B(0,2)$ we define:
$$ v_r (x):= \frac 1r v (r x),  \qquad \phi_r (x) := v_r (x) + (|c_0|+1)^{1/2} x^\perp,$$
so that:
\begin{equation*} 
\begin{split}
\forall x\in B_2\qquad &\nabla v_r  (x) = \nabla v(r x) = \nabla^2 u(r x),
\qquad \nabla \phi_r (x)
= \nabla^2 u(r x) + (|c_0|+1)^{1/2} \left [ \begin{array}{cc} 0 & -1 \\ 1
    & 0 \end{array} \right ],  \\
&\det  \nabla \phi_r (x)= \det \nabla v_r (x)+ (|c_0| + 1)>0.
\end{split}
\end{equation*}
Since $\phi_r \in W^{1,2}(B_2,\R^2)$, we can apply
(\ref{modest}) to $x\in B_1$ and  $\delta<R=1$, to obtain for $r<r_0$ small
enough:  
\begin{equation*} 
\begin{split}
{\rm osc}_{B(x,\delta)} \phi_r & \le \sqrt{2\pi} (\ln \frac 1\delta )^{-1/2}
\Big ( \int_{B(x,1)} |\nabla \phi_r|^2\Big )^{1/2}  \\ & 
\leq \sqrt{2\pi} (\ln \frac 1\delta)^{-1/2} 
\Big(2 (|m|+1)^{1/2} |B_1|+
(\int_{B(x,1)} |\nabla^2 u(ry)|^2~\mbox{d}y)^{1/2} \Big) \\
& \leq  C \ln (\frac 1\delta)^{-1/2}
\Big((|m|+1)^{1/2} + \frac{1}{r}\|\nabla^2 u \|_{B(0,2r)}\Big)\\
& \leq C \ln (\frac 1\delta)^{-1/2},
\end{split}
\end{equation*} 
where $C=C(m,|A|)>0$. Above we used the fact that $B(x,1)\subset B_2$
and that $0$ is a Lebesgue point for $\nabla^2 u$. 
Now, given $\epsilon >0$ we choose $\delta >0$ such that: 
$$ \ln (\frac 1\delta)^{-1/2} < \epsilon /C. $$ 
Consequently: 
$$\forall x,y\in D_1 \quad \forall r<r_0 \qquad |x-y|<\delta \implies
|\phi_r(x)-\phi_r(y)|<\epsilon.$$ 
Since $v_r  - \phi_r$ is a given linear deformation, we conclude that  the family:
$$ {\mathcal F}=\{v_r : D_1 \to \R^2; ~~ r<r_0\} $$ 
is equicontinuous. On the other hand:
$$ \int_{D_1} \Big |\nabla v_r -   A \Big |^2  = \pi \omega(r) \to 0
\quad \mbox{ as } r\to 0.  $$ 
Let $\tilde v_r= v_r - \dashint_{B_1} v_r$ and apply
the Poincar\'e inequality to obtain that 
$$ \tilde v_r \to Ax \quad \mbox{ in } W^{1,2}(\Omega, \R^2) \quad \mbox{ as } r\to 0.$$  
Now, equicontinuity of  ${\mathcal F}$ and $v_r(0)=0$ yield, by
Arzel\`a-Ascoli theorem, that a subsequence of $v_r$ (which we do not
relabel) converges uniformly to a continuous function $V$ on $D_1$. 
Since $v_r - \tilde v$ is constant, we deduce that $V(x) -Ax=c$ 
is constant too. But then, evaluating at $0$ gives $c=0$. Hence, $v_r$ uniformly converges
to $Ax$ on $D_1$. 

Let us fix $\epsilon>0$ and choose $r_0$ so that:
$$ \forall r< r_0 \qquad \|v_r (x) - Ax\|_{\mathcal{C}^0(D_1)} \leq \frac 14 \epsilon.  $$ 
This implies: 
$$\|\nabla u(x) - Ax\|_{\mathcal{C}^0(D_r)} \le \frac 14 \epsilon r. $$ 
Fixing $a\in D_r$ we get:
$$ \|\nabla u(x) - \nabla u(a) - A (x-a)\|_{\mathcal{C}^0(D_r)} \le
\frac 12 \epsilon r, $$ 
giving the first estimate in (\ref{graduest}). Since
${\mathrm{diam}} ~D_r=2r$, the second estimate follows.
\end{proof}

%\begin{remark} A curiosity of mine. Has nothing to do with the
%  arguments. Let $\Omega\subset \R^2$  be a domain and let $u\in W^{2,2}(\Omega)$ be
%such that $\det
%\nabla^2 u \ge m$  (say $m=0$) i.e. in $\Omega$.  Then if $x$ is a Lebesgue point
%of $\nabla^2 u$ with $A$ positive definite, can we say that $\nabla u$
%is one-to-one in a neighborhood of $x$? What if we only know $\det A\neq 0$.
%\end{remark}

\medskip

We now prove a simple useful lemma whose statement we quote from \cite{Sve}:

\begin{lemma} [\cite{Sve}, Lemma 2]\label{convexiseasy} 
Let  $u\in \mathcal{C}^1(\bar\Omega)$ and fix $a\in \Omega$. Suppose
that:
\begin{equation}\label{sth}
u(x) \ge u(a) + \nabla u(a) \cdot (x-a) \quad \forall x\in \partial
\Omega
\end{equation}
and that $\nabla u(x) \neq \nabla u(a)$ for all $x\in
\Omega\setminus\{a\}$. Then $u$ has a supporting hyperplane at
$a$, i.e. (\ref{sth}) holds for all $x\in \Omega$. In particular, 
if $\Omega$ is convex then:
$$({\mathrm C} u)(a) = u(a)$$ 
where ${\mathrm C} u$
denotes the convexification of the function $u$   over $\bar\Omega$:
$$({\mathrm C}u)(a) = \sup \left\{T(a); ~~T:\Omega\to \R 
\mbox{ is affine and }  T(x) \le u(x) ~~ \forall x\in \bar\Omega \right\}.  $$  
\end{lemma}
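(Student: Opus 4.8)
The plan is to reduce to a normalized situation and then finish with a one-line extremum argument. First I would subtract the affine tangent map $T_a(x)=u(a)+\nabla u(a)\cdot(x-a)$ and work with $\tilde u=u-T_a\in\mathcal{C}^1(\bar\Omega)$, which satisfies $\tilde u(a)=0$, $\nabla\tilde u(a)=0$, the inequality $\tilde u\ge 0$ on $\partial\Omega$ (this is exactly (\ref{sth})), and $\nabla\tilde u(x)\neq 0$ for every $x\in\Omega\setminus\{a\}$. The statement to be proved then becomes simply $\tilde u\ge 0$ on $\Omega$; extending (\ref{sth}) to all of $\bar\Omega$ afterwards is immediate from continuity of $u$ up to the boundary.

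For the core step I would argue by contradiction, using that $\bar\Omega$ is compact (recall $\Omega$ is bounded), so $\tilde u$ attains its minimum over $\bar\Omega$ at some point $x_*$. If $\tilde u(x_*)\ge 0$ there is nothing to prove. Otherwise $\tilde u(x_*)<0=\tilde u(a)$ forces $x_*\neq a$, and since $\tilde u\ge 0$ on $\partial\Omega$ we must have $x_*\in\Omega$; thus $x_*$ is an interior minimum of a $\mathcal{C}^1$ function and $\nabla\tilde u(x_*)=0$, contradicting the hypothesis $\nabla\tilde u(x)\neq 0$ for $x\in\Omega\setminus\{a\}$. Hence $\tilde u\ge 0$ on $\Omega$, i.e. $u$ has a supporting hyperplane at $a$.

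Finally, to obtain $(\mathrm{C}u)(a)=u(a)$ when $\Omega$ is convex, I would observe that $T_a$ is an affine function lying below $u$ on all of $\bar\Omega$ — by the part just proved on $\Omega$ together with (\ref{sth}) on $\partial\Omega$ — and agreeing with $u$ at $a$, so that $(\mathrm{C}u)(a)\ge T_a(a)=u(a)$; the reverse inequality $(\mathrm{C}u)(a)\le u(a)$ is built into the definition, since every admissible affine $T$ in the supremum satisfies $T(a)\le u(a)$. I do not expect any genuine obstacle here: the lemma is soft, and the only two points that need a little care are the normalization — which is precisely what turns the strict-gradient hypothesis into the statement that $a$ is the sole candidate critical point of $\tilde u$ — and the passage from the inequality on $\Omega$ to $\bar\Omega$, which relies on $u\in\mathcal{C}^1(\bar\Omega)$.
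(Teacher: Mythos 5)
Your argument is correct and is essentially the paper's own proof: both consider $g=u-T_a$ and note that a negative minimum would be attained at an interior point $c\neq a$ (by the boundary inequality and $g(a)=0$), forcing $\nabla u(c)=\nabla u(a)$ and contradicting the hypothesis. The normalization by subtracting the tangent map and the concluding remark on the convexification are just slightly more explicit write-ups of the same steps.
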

\begin{proof}
Consider the tangent map $T(x)= u(a) + \nabla u(a) \cdot (x-a)$. We now
claim that $T(x) \le u(x)$ for all $x\in \bar\Omega$. For otherwise, 
the continuous function $g(x) = u(x) -T(x)$ would 
assume a negative minimum on $\bar\Omega$ at some
$c\in \Omega\setminus\{a\}$. Hence $\nabla g(c)=0$,
which is a contradiction with the second assumption as $\nabla u(c)= \nabla T(c)=
\nabla u(a)$. 
\end{proof}

\bigskip

We are ready to prove the key theorem of this section:

\bigskip

\noindent {\bf Proof of Theorem \ref{thm-convex2}.}

{\bf 1.} The $\mathcal{C}^1$ regularity of $u$ is an immediate consequence
of Theorem \ref{modulus}. Recall the properties of the singular set $S$
from Definition \ref{defi1}.
Since $\det \nabla^2 u>0$
a.e. in $\Omega$, modulo a global change of
sign for $u$ we can choose $x_0\in \Omega\setminus S$ a  Lebesgue
point of $\nabla^2 u$ as in (\ref{leb}), such that the matrix $A$ is
positive definite.  Hence there exists $\lambda >0$ for which $\xi
\cdot A \xi \ge \lambda |\xi|^2$ for all $\xi\in \R^2$. 
By Lemma \ref{controlofu} for all $r<r_0$ the estimate  (\ref{graduest})
holds true with $\epsilon =\frac 14  \lambda$, and without loss of
generality $\nabla u$ is also one-to-one on
$D_r=\overline{B(x_0, r)}\subset \Omega\setminus S$. By  (\ref{graduest}) it follows that:
$$\forall x\in\partial D_r\qquad  u(x) - u(x_0) - \nabla u(x_0) \cdot (x-x_0) \ge \frac 12 (x-x_0)
\cdot A(x-x_0) - \varepsilon r^2 \ge  \frac \lambda 4 r^2 >0.$$  
In view of  Lemma \ref{convexiseasy}, $u$ therefore admits a supporting
hyperplane at $x_0$ on $D_r$. 

\medskip

{\bf 2.} Our next  claim is that $u$ is locally convex
in $\Omega \setminus S$. Since $\Omega\setminus S$ is open and
connected, it  is also path-wise
connected. Therefore, for a fixed $x\in \Omega\setminus S$,  there exists a continuous path within
$\Omega\setminus S$ connecting $x$ and $x_0$, which  can be covered with a finite chain of
open balls  $B_i\subset \Omega \setminus S$, $i=1,2\ldots n$,
such  that  $x_0\in B_1$, $B_i \cap B_{i+1} \neq \emptyset$ and $x\in
B_n$. We now  need  the following  strong theorem due to J. Ball:

\smallskip

\noindent {\bf Theorem} ({\it \cite{Ball}, Theorem 1).
Let $\Omega\subset \R^n$ be open and convex, and let $u\in
\mathcal{C}^1(\Omega)$. The necessary and sufficient condition for $u$ to be strictly
convex on $\Omega$ is: 

(i) $\nabla u$ is locally one-to-one, and 

(ii) there exists a locally supporting hyperplane for $u$ at some point
of $\Omega$:
$$ \exists x_0\in \Omega \quad \forall \rho>0 \quad \forall x\in B(x_0,\rho)\qquad
u(x) \ge u(x_0) + \nabla u(x_0) \cdot (x-x_0).$$
}
\noindent Applying this result consecutively to each ball $B_i$,
we deduce that $u$ is strictly convex on $B_n$, hence it is
locally convex at $x$.

\medskip

{\bf 3.}  To finish the proof, we fix a direction in $\R^2$ and
consider the family  of straight lines parallel to that direction. For
almost every such line $L$, the 1-dimensional Lebesgue measure of $L\cap
S$ is zero and $u \in W^{2,2}  (L \cap \Omega)$. Also, $u$ is locally
convex on $(L\cap \Omega) \setminus S$ in view of the previously
proven claim. We now state the following easy 
lemma to show that $u$ is convex on connected components of $L\cap \Omega$:

\smallskip

\noindent{\bf Lemma.} {\it
Let $I\subset \R$ be an open bounded interval, $\phi\in W^{2,1} (I)$ and assume
that $\phi$ is locally convex on $I\setminus S$, where $S$ is a set of measure
$0$. Then $\phi$ is convex on $I$. }

\smallskip

\noindent
The proof is elementary. Since $\phi\in \mathcal{C}^1(I)$ and $\phi$ is locally convex on a full measure
open subset of $I$, we deduce that $\phi'' \ge 0$ a.e. in $I$. But this immediately implies
that $\phi'$ is increasing in $I$, hence $\phi$ is globally convex.

\medskip

We have previously shown that $u$ is convex on connected components of $L \cap
\Omega$,  for almost all straight lines $L$ in any direction. By
continuity of $\nabla u$,  the same must hold, in fact, for  all
lines, by approaching any given line with a selected
sequence of 'good' lines and passing to the limit in the convexity
inequality. This implies that actually $u$ is convex on any convex
subset of $\Omega$ and the proof is done.
\endproof

For completeness, we now note another corollary of Lemma  \ref{controlofu} and Lemma
\ref{convexiseasy}:

\begin{lemma}\label{extrabonus} 
Let $u\in W^{2,2}(\Omega)$ satisfy (\ref{hesc0}). Assume that $x_0$ is a Lebesgue point for $u$
with $A$ in (\ref{leb}) being positive definite. Assume that $\nabla u$ is one-to-one in
a neighborhood of $x_0$. Then $u$ is locally convex  at $x_0$.  
\end{lemma}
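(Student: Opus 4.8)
The plan is to combine the two already-developed ingredients exactly as in Step 1 of the proof of Theorem \ref{thm-convex2}, but localized and with no appeal to a singular set or degree theory, since the local injectivity of $\nabla u$ is now a hypothesis rather than something to be derived. First I would record that, by Lemma \ref{controlofu}, the hypothesis \eqref{hesc0} already gives $u\in\mathcal{C}^1(\Omega)$, so it makes sense to speak of supporting hyperplanes, and moreover, since $x_0$ is a Lebesgue point for $\nabla^2 u$ with $A$ positive definite, there is $\lambda>0$ with $\xi\cdot A\xi\ge\lambda|\xi|^2$ for all $\xi\in\mathbb{R}^2$. Choose $\rho>0$ small enough that $\nabla u$ is one-to-one on $\overline{B(x_0,\rho)}$ (possible by the injectivity hypothesis) and also small enough that $\overline{B(x_0,\rho)}\subset\Omega$.

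Next I would apply the second estimate in \eqref{graduest} of Lemma \ref{controlofu} with $\epsilon=\tfrac14\lambda$: this produces $r_0>0$ such that for every $r<\min\{r_0,\rho\}$ and every $x\in\partial B(x_0,r)$,
\[
u(x) - u(x_0) - \nabla u(x_0)\cdot(x-x_0)\ \ge\ \tfrac12(x-x_0)\cdot A(x-x_0) - \epsilon r^2\ \ge\ \tfrac{\lambda}{4}r^2\ >\ 0.
\]
Since $\nabla u$ is one-to-one on $\overline{B(x_0,r)}$, in particular $\nabla u(x)\ne\nabla u(x_0)$ for $x\in B(x_0,r)\setminus\{x_0\}$, so Lemma \ref{convexiseasy} applied on the convex domain $\Omega'=B(x_0,r)$ yields the supporting hyperplane inequality $u(x)\ge u(x_0)+\nabla u(x_0)\cdot(x-x_0)$ for all $x\in B(x_0,r)$. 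This exhibits a locally supporting hyperplane for $u$ at $x_0$ in the sense of condition (ii) of Ball's theorem, and condition (i) holds on $B(x_0,\rho)$ by assumption; hence by Ball's theorem (quoted in the proof of Theorem \ref{thm-convex2}) $u$ is strictly convex on $B(x_0,\rho)$, and in particular locally convex at $x_0$.

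I do not expect any serious obstacle here: every analytic input — the $\mathcal{C}^1$ regularity, the quantitative Taylor-type estimate near a Lebesgue point, the passage from a one-sided boundary inequality plus injectivity of the gradient to a genuine supporting plane, and Ball's convexity criterion — has already been established or quoted above, and the argument is simply their concatenation at a single point. The only mild care needed is bookkeeping of the radii: one must shrink to a radius on which simultaneously $\overline{B(x_0,r)}\subset\Omega$, the estimate \eqref{graduest} is valid with the chosen $\epsilon$, and $\nabla u$ is injective; all three are open conditions near $x_0$, so a common small radius exists.
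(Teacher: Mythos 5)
Your proposal is correct, and it rests on the same two ingredients the paper uses (Lemma \ref{controlofu} with $\epsilon=\lambda/4$ and Lemma \ref{convexiseasy}), but the final step is genuinely different. The paper never invokes Ball's theorem in this lemma: it exploits the fact that the estimate (\ref{graduest}) is uniform in the base point $a\in D_r$, so it obtains the boundary inequality, and hence via Lemma \ref{convexiseasy} a supporting hyperplane, at \emph{every} $a\in B(x_0,r/2)$, concluding directly that $u=\mathrm{C}u$ (and hence $u$ is convex) on that ball. You instead produce the supporting hyperplane only at the single point $x_0$ (your application is clean, since you test against $x\in\partial B(x_0,r)$ where $|x-x_0|=r$, so $\tfrac12(x-x_0)\cdot A(x-x_0)-\tfrac{\lambda}{4}r^2\ge\tfrac{\lambda}{4}r^2$), and then upgrade to strict convexity on the whole ball $B(x_0,\rho)$ via Ball's theorem, whose hypothesis (i) is supplied by the assumed injectivity of $\nabla u$; this is exactly the pattern of Steps 1--2 of the proof of Theorem \ref{thm-convex2}, localized to one ball. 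What each route buys: yours gives strict convexity on the full (possibly larger) ball on which $\nabla u$ is one-to-one, at the price of importing Ball's nontrivial criterion; the paper's version is more self-contained, using only the uniform Taylor-type estimate and the elementary Lemma \ref{convexiseasy}, which is presumably why it is stated separately as a corollary of those two lemmas. The only bookkeeping point in your argument — choosing $r<\min\{r_0,\rho\}$ so that $\overline{B(x_0,r)}\subset\Omega$, the estimate holds, and $\nabla u$ is injective there — is handled correctly.
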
  
\begin{proof}
There exists $\lambda >0$ for which $\xi \cdot A \xi \ge \lambda |\xi|^2$ for all $\xi\in \R^2$. 
By Lemma \ref{controlofu} for all $r<r_0$ and all $a\in
D_r=\overline{B(x_0, r)}$, estimate (\ref{graduest})
holds true with $\epsilon=\lambda/4$. Without loss of generality, $\nabla u$ is one-to one on
$D_r$ and,  by (\ref{graduest}):
\begin{equation*}
\begin{split}
\forall a\in B(x_0, r/2) & \quad \forall x\in \partial B(x_0,
r/2)\qquad\\ 
& u(x) - u(a) - \nabla u(a) \cdot (x-a) \ge \frac 12 (x-a)
\cdot A(x-a) - \varepsilon r^2 \ge  \frac \lambda{4} r^2 >0. 
\end{split}
\end{equation*} 
The assumptions of Lemma \ref{convexiseasy}
are satisfied and hence $u(a)=({\mathrm C}u)(a)$ for all $a\in B(x_0, r/2)$. The claim is proved. 
\end{proof}

\begin{remark} In proving Theorem \ref{thm-convex} we only used  the
conclusion of Theorem \ref{thm-IS} that
$v$ is locally one-to-one on a connected set of full measure. Therefore,
the assumptions of Theorem \ref{thm-convex} could potentially be
relaxed (as in Theorem \ref{thm-convex2}), but not to
$\det \nabla^2 u >0$ a.e. Indeed, let $v= \nabla u$ be as in Example \ref{example}. 
Then $v$ is not of integrable dilatation because:
$$\frac{|\nabla v|^2}{\det\nabla v} (x,y) \geq \frac{2}{x^2 (1-y^2)},$$
and also the singular set $S=\{(0,y); ~~ y\in (-1,1)\}$ coincides with the vanishing set of
$v$ where $v$ is obviously not locally one-to-one.  On the other hand,
Theorem \ref{thm-convex2} can be also
applied to the cases where $\det \nabla^2 u\in \mathcal{C}^0(\Omega)$
is positive a.e. and $\Omega\setminus f^{-1}(0)$ is connected.
%In general the 'zero set' of $f\in L^1$ must have a close relationship with the set $S$ but the
%question is how to define that set. For example, if $\Omega$ is
%simply-connected and $\det \nabla^2 u f\in L^1$,  is positive a.e., 
%is moreover continuous and positive away from zero in a neighborhood
%of the boudndary, again we have the convexity  (but not the regularity) result. 
\end{remark} 

\section{Recovery sequence: a proof of Theorem \ref{upbd}}\label{last}

It is enough to prove Theorem \ref{upbd} for
$v\in\mathcal{C}^{2,\beta}(\bar\Omega)$ satisfying $\det\nabla^2v =
\det\nabla^2 v_0$. In the general case of $v\in W^{2,2}(\Omega)$
satisfying the same constraint, the result follows then by a
diagonal argument in view of the density property established in
Theorem \ref{thm-density}.

\medskip

{\bf 1.} We now recall the useful change of variable
$\tilde\phi_h\in\mathcal{C}^{1,\beta}(\Omega^h, \mathbb{R}^3)$ between
thin plates $\Omega^h = \Omega\times (-{h}/2, h/2)$ and thin shallow
shells $(S_h)^h$:
$$ \tilde\phi_h(x, x_3) = x+h^\alpha v_0(x) e_3+x_3 \vec n^h(x)
\qquad \forall (x, x_3) \in\Omega,$$
where $\vec n^h$ is the unit normal vector to the midsurface $S_h$, given as
the image of the map $\phi_h(x) = x+ h^\alpha v_0(x)e_3$:
$$\vec n^h(x) = \frac{\partial_1\phi_h(x) \times \partial_2\phi_h(x)}{|\partial_1\phi_h(x)
  \times \partial_2\phi_h(x)|} = \frac{1}{\sqrt{1+h^{2\alpha}|\nabla
    v_0|^2}}\left(-h^\alpha (\nabla v_0)^* + e_3\right).$$
By Theorem \ref{th8.1}, there exists an equibounded sequence
$w_h\in\mathcal{C}^{2,\beta}(\bar\Omega,\mathbb{R}^3)$ such that the
deformations $\xi_h(x) = x+ h^\alpha v(x) e_3 + h^{2\alpha} w_h(x)$
are isometrically equivalent to ${id}+h^\alpha v_0e_3$:
\begin{equation}\label{isom}
\forall 0< h \ll 1 \qquad (\nabla \xi_h)^T\nabla\xi_h = \nabla(\mbox{id}+h^\alpha v_0
e_3)^T\nabla(\mbox{id} + h^\alpha v_0e_3).
\end{equation}
Define now the recovery sequence $u^h\in
\mathcal{C}^{1,\beta}((S_h)^h,\mathbb{R}^3)$ by:
$$v^h(x, x_3) = u^h(\tilde\phi_h(x, x_3)) = \xi_h(x) + x_3\vec N^h(x) +
\frac{x_3^2}{2}h^\alpha d^h(x),$$
where $\vec N^h$ is the unit normal vector to the image surface
$\xi_h(\Omega)$:
$$\vec N^h(x) = \frac{\partial_1\xi_h(x) \times \partial_2\xi_h(x)}{|\partial_1\xi_h(x)
  \times \partial_2\xi_h(x)|} = \left(-h^\alpha (\nabla v)^* +
  e_3\right) + \mathcal{O}(h^{2\alpha}),$$
while the 'warping' vector fields
$d^h\in\mathcal{C}^{1,\beta}(\bar\Omega,\mathbb{R}^3)$, approximating
the effective warping $d\in\mathcal{C}^{0,\alpha}(\bar\Omega,\mathbb{R}^3)$ 
are defined so that:
\begin{equation}\label{warp}
\begin{split}
 h^\alpha \|d^h\|_{\mathcal{C}^{1,\beta}} &\leq C \quad \mbox{ and } \quad  
\lim_{h\to 0} \|d^h - d\|_{L^\infty} = 0,\\
\mathcal{Q}_2\big(\nabla^2 v_0 - \nabla^2 v\big) & = \min\left\{\mathcal{Q}_3(F); ~~
F\in\mathbb{R}^{3\times 3}, ~ F_{tan} = \nabla^2 v_0 - \nabla^2
v\right\}  \\ & = \mathcal{Q}_3\big((\nabla^2 v_0 - \nabla^2 v)^* +
\mbox{sym}(d\otimes e_3)\big).
\end{split}
\end{equation}
For $F\in\mathbb{R}^{3\times 3}$, by $F_{tan}$ we denote the principal
$2\times 2$ minor of $F$. Recall also that the quadratic form
$\mathcal{Q}_3$ is given by $\mathcal{Q}_3(F) = D^2W(\mbox{Id})(F, F)$.

\medskip

{\bf 2.} Because of the first condition in (\ref{warp}), the
statements in Theorem \ref{upbd} (i), (ii) easily follow. In order to compute the energy
limit in (iii), we write:
\begin{equation}\label{nowe}
I^h(u^h)= \frac{1}{h} \int_{\Omega^h} W\left((\nabla v^h)(b^h)^{-1}\right)
\det\nabla\tilde\phi_h = \frac{1}{h} \int_{\Omega^h} W\left(\sqrt{K^h}\right)
\det b^h,
\end{equation}
where:
$$ b^h = \nabla \tilde\phi^h $$
while the frame invariance of $W$ justifies the second equality in
(\ref{nowe}) with:
$$K^h(x, x_3) = (b^h)^{-1, T} (\nabla v^h)^T (\nabla v^h) (b^h)^{-1}.$$
We will now compute the entries of the symmetric matrix field $K^h$, up to terms of
order $o(h^{\alpha+1})$. In what follows we adopt the convention that
all equalities hold modulo quantities which are uniformly $o(h^{\alpha+1})$.
Call $M^h = (\nabla v^h)^T \nabla v^h$. Since:
$$\nabla_{tan}v^h = \nabla\xi_h + x_3\nabla \vec N^h +
o(h^{\alpha+1}), \qquad \partial_3 v^h = \vec N^h + x_3 h^\alpha d^h,$$
we obtain, in view of (\ref{isom}): 
\begin{equation*}
\begin{split}
M^h_{tan} & = 
\nabla(\mbox{id}+ h^\alpha v_0e_3)^T \nabla(\mbox{id} + h^\alpha v_0e_3) +
2x_3 \mbox{sym}\left((\nabla\xi_h)^T\nabla \vec N^h\right) \\ & = \mbox{Id}_2
+ h^{2\alpha}\nabla v_0\otimes\nabla v_0 - 2x_3h^\alpha \nabla^2v +
o(h^{\alpha+1}),\\
M^h_{13, 23} & = (M^h)^T_{13, 23} = x_3 h^\alpha d^h_{tan}  + o(h^{\alpha+1}), \\  
M^h_{33} & = |\vec N^h + x_3h^\alpha
d^h|^2 = 1+ 2x_3h^\alpha d^h_3 + o(h^{\alpha+1}).
\end{split}
\end{equation*}
Further, by a direct calculation, one obtains:
\begin{equation*}
\begin{split}
\big(b^h\big)_{tan} & = 
\mbox{Id}_2 - x_3h^\alpha \nabla^2v_0 + o(h^{\alpha+1}),\\
\big(b^h\big)_{13, 23}^T & = h^\alpha \nabla v_0 + o(h^{\alpha+1}),\qquad
\big(b^h\big)_{13, 23, 33}  = \vec n^h.
\end{split}
\end{equation*}
and the inverse matrix $(b^h)^{-1}$ has the following structure:
\begin{equation*}
\begin{split}
\big((b^h)^{-1}\big)_{tan} & = A + o(h^{\alpha+1}),\\
\big((b^h)^{-1}\big)_{13, 23} & = h^\alpha A\nabla v_0 +
o(h^{\alpha+1}),\qquad  \big((b^h)^{-1}\big)^T_{13, 23, 33} = \vec n^h.
\end{split}
\end{equation*}
where the principal minor $A(x) \in\mathbb{R}^{2\times 2}$ of
$(b^h(x))^{-1}$ is the symmetric matrix:
\begin{equation}\label{A}
A = \big(\mbox{Id}_2 +h^{2\alpha}\nabla v_0\otimes\nabla v_0 -
x_3h^\alpha\nabla^2 v_0\big)^{-1}.
\end{equation}

{\bf 3.}  We now compute, 
\begin{equation*}
\begin{split}
\big((b^h)^{-1, T} M^h\big)_{tan}  & = \mbox{Id} + x_3h^\alpha A(\nabla^2v_0
- 2\nabla^2v)  + x_3h^\alpha \vec n^h_{tan} \otimes \vec d^h_{tan} +
o(h^{\alpha+1}) \\
& = \mbox{Id}_2 + x_3h^\alpha (\nabla^2v_0 - 2\nabla^2v) +
o(h^{\alpha+1}) \\
\big((b^h)^{-1, T} M^h\big)^T_{13,23}  & = h^\alpha(\mbox{Id} +
h^{2\alpha}\nabla v_0\otimes\nabla v_0) A \nabla v_0 + x_3h^\alpha
d^h_{tan} + o(h^{\alpha+1}) \\ & =  h^\alpha \nabla v_0   + x_3h^\alpha d^h_{tan} + o(h^{\alpha+1}) \\
\big((b^h)^{-1, T} M^h\big)_{13,23} & = x_3h^\alpha d^h_{tan} + \vec n^h_{tan} 
 + o(h^{\alpha+1}) \\
\big((b^h)^{-1, T} M^h\big)_{33} & = \vec n_3^h + 2x_3h^\alpha d_3^h   + o(h^{\alpha+1}),
\end{split}
\end{equation*}
where we used that $A\big(\mbox{Id} + h^{2\alpha}\nabla v_0\otimes
\nabla v_0 - 2x_3h^\alpha\nabla^2v\big) = \mbox{Id} + x_3h^\alpha
A(\nabla^2v_0 - 2\nabla^2v)$ and that
$h^\alpha\big(\mbox{Id} + h^{2\alpha}\nabla v_0\otimes\nabla v_0\big)A
= h^\alpha(\mbox{Id}+x_3h^\alpha\nabla^2 v_0A) = h^\alpha\mbox{Id} + o(h^{\alpha+1})$.
Consequently:
\begin{equation*}
\begin{split}
K^h_{tan}  & = A + x_3h^\alpha (\nabla^2v_0 - 2\nabla^2v) 
+ \vec n^h_{tan} \otimes \vec n^h_{tan} + o(h^{\alpha+1})  \\ & = A\big(\mbox{Id} +
x_3h^\alpha(\nabla^2v_0 - 2\nabla^2v) + \vec n^h_{tan} \otimes \vec
n^h_{tan}  + \frac{h^{4\alpha}|\nabla v_0|^2}{1+h^{2\alpha}|\nabla
  v_0|^2}\nabla v_0\otimes\nabla v_0\big) + o(h^{\alpha+1}) \\ & = 
A\big(\mbox{Id} +
x_3h^\alpha\nabla^2v_0 - 2x_3h^\alpha\nabla^2v + h^{2\alpha} \nabla v_0\otimes\nabla v_0\big) + o(h^{\alpha+1}) 
\\ & = \mbox{Id}_2 + 2x_3h^\alpha A(\nabla^2v_0 - \nabla^2v) + o(h^{\alpha+1})
\\ & = \mbox{Id}_2 + 2x_3h^\alpha(\nabla^2v_0 - \nabla^2v)  + o(h^{\alpha+1}) \\
K^h_{13,23}  & = h^\alpha A\nabla v_0 + x_3h^\alpha d^h_{tan} +\vec
n_3^h\vec n^h_{tan} + o(h^{\alpha+1}) \\ & = h^\alpha A\nabla v_0 + x_3h^\alpha d^h_{tan} 
- \frac{h^{\alpha}}{1+h^{2\alpha}|\nabla v_0|^2}\nabla v_0 + o(h^{\alpha+1}) \\
K^h_{33} & = h^{2\alpha}\langle A\nabla v_0, \nabla v_0\rangle +
2x_3h^\alpha d^h_3 +|\vec n_3^h|^2 + o(h^{\alpha+1}) \\ & = 
1+ h^{2\alpha}\langle A\nabla v_0, \nabla v_0\rangle +
2x_3h^\alpha d^h_3 -\frac{h^{2\alpha}|\nabla
  v_0|^2}{1+h^{2\alpha}|\nabla v_0|^2} + o(h^{\alpha+1}),
\end{split}
\end{equation*}
where we used that $ \vec n^h_{tan} \otimes \vec n^h_{tan} =
\frac{h^{2\alpha}}{1+h^{2\alpha}|\nabla v_0|^2}\nabla v_0\otimes\nabla
v_0$. Observe that:
\begin{equation*}
\begin{split}
& h^\alpha A\nabla v_0 -  \frac{h^{\alpha}}{1+h^{2\alpha}|\nabla
  v_0|^2}\nabla v_0 =  \\ & 
\qquad = \frac{h^{\alpha}}{1+h^{2\alpha}|\nabla
  v_0|^2} A \big( (1+h^{2\alpha}|\nabla v_0|^2)\nabla v_0 
- \nabla v_0 - h^{2\alpha} |\nabla v_0|^2 \nabla v_0 \big) +
o(h^{\alpha+1}) \\ & \qquad = o(h^{\alpha+1}).
\end{split}
\end{equation*}
Therefore, in fact:
\begin{equation*}
\begin{split}
K^h_{13,23}   = x_3h^\alpha d^h_{tan} + o(h^{\alpha+1}), \qquad 
K^h_{33}  = 1 + 2x_3h^\alpha d^h_3 + o(h^{\alpha+1}).
\end{split}
\end{equation*}
Concluding, we get:
$$K^h = \mbox{Id}_3 + 2x_3 h^\alpha\big( (\nabla^2v_0 - \nabla^2v)^* +
\mbox{sym}(d^h\otimes e_3)\big) + o(h^{\alpha+1}).$$

\medskip

{\bf 4.}  Taylor expanding $W$ at $\mbox{Id}_3$ and using (\ref{warp}) we now see that:
\begin{equation*}
\begin{split}
W(\sqrt{K^h}) & = W\big(\mbox{Id}_3 + x_3h^\alpha ((\nabla^2v_0 - \nabla^2v)^* +
\mbox{sym}(d^h\otimes e_3)) + o(h^{\alpha+1})\big) \\ & = 
\frac{1}{2} x_3^2h^{2\alpha}\mathcal{Q}_3 \big((\nabla^2v_0 - \nabla^2v)^* +
\mbox{sym}(d^h\otimes e_3)\big) + o(h^{2\alpha+2}).
\end{split}
\end{equation*}
Note that $\nabla\tilde \phi_h = 1 +\mathcal{O}(h^\alpha)$.
By this fact,  recalling (\ref{nowe}) and the convergence in (\ref{warp}),  it follows that:
\begin{equation*}
\begin{split}
\lim_{h\to 0}\frac{1}{h^{2\alpha+2}}I^h(u^h) &= \lim_{h\to
  0}\frac{1}{h^{2\alpha+2}} \frac{1}{h}\int_{\Omega^h} W(\sqrt{K^h})
(1+\mathcal{O}(h^\alpha)) \\ 
& =  \lim_{h\to  0}\frac{1}{2h^3} \int_{\Omega^h}
x_3^2\mathcal{Q}_3 \big((\nabla^2v_0 - \nabla^2v)^* +
\mbox{sym}(d\otimes e_3)\big) \\
& =  \lim_{h\to  0}\frac{1}{2h^3} \big(\int_{-h/2}^{h/2}
x_3^2~\mbox{d}x_3\big) \int_{\Omega}\mathcal{Q}_2\big(\nabla^2v_0 -
\nabla^2v\big)~\mbox{d}x \\ &
= \frac{1}{24}  \int_{\Omega}\mathcal{Q}_2\big(\nabla^2v_0 -
\nabla^2v\big)~\mbox{d}x.
\end{split}
\end{equation*}
Since, clearly $v^h_3 = h^\alpha v + \mathcal{O}(h^{2\alpha})$, we obtain:
\begin{equation*}
\begin{split}
\lim_{h\to 0}\frac{1}{h^{2\alpha+2}} \frac{1}{h}\int_{\Omega^h}
h^{\alpha+2} f v_3^h \det b^h 
= \lim_{h\to 0}\frac{1}{h^{2\alpha+2}} \frac{1}{h}\int_{\Omega^h}
h^{\alpha+2} f (h^\alpha v + \mathcal{O}(h^{2\alpha}))
= \int_\Omega fv~\mbox{d}x.
\end{split}
\end{equation*}
The proof of Theorem \ref{upbd} is complete.
\endproof
 
\bigskip

\noindent{\bf Acknowledgments.} 
This project is based upon work supported by, among others, the National Science
Foundation. M.L. is partially supported by the NSF grants DMS-0707275 and
DMS-0846996, and by the Polish MN grant N N201 547438. L.M.  
is supported by the MacArthur Foundation, M.R.P. is partially
supported by the NSF grants DMS-0907844 and DMS-1210258.

\end{document}